\newcommand{\Expect}{\mathbb{E}}
\newcommand{\ud}{\,\mathrm{d}}
\newcommand{\one}{\mathbf{1}}
\newcommand{\zero}{\mathbf{0}}
\newcommand{\op}[1]{\operatorname{#1}}
\newcommand{\norm}[1]{\|#1\|}
\newcommand{\trace}{\op{Tr}}
\newcommand{\Tan}{\op{Tan}}
\renewcommand{\tilde}{\widetilde}
\newcommand{\cvx}{\mathtt{cvx}}
\newcommand{\prim}{\op{primal}}
\newcommand{\dual}{\op{dual}}
\newcommand{\real}{\mathbb{R}}
\newcommand{\argmax}{\mathop{\operatorname{argmax}}}
\newcommand{\argmin}{\mathop{\operatorname{argmin}}}
\newcommand{\W}{\mathds{W}}
\newcommand{\calC}{\mathcal{C}}
\newcommand{\calD}{\mathcal{D}}
\newcommand{\calF}{\mathcal{F}}
\newcommand{\calG}{\mathcal{G}}
\newcommand{\calK}{\mathcal{K}}
\newcommand{\calP}{\mathcal{P}}
\newcommand{\Q}{\mathcal{Q}}
\newcommand{\calX}{\mathcal{X}}
\newcommand{\calY}{\mathcal{Y}}
\declaretheorem[numberwithin=section]{theorem}
\declaretheorem[sibling=theorem]{lemma}
\declaretheorem[sibling=theorem]{proposition}
\declaretheorem[sibling=theorem]{corollary}
\declaretheorem[sibling=theorem]{remark}
\declaretheorem[sibling=theorem]{definition}
\declaretheorem[sibling=theorem]{example}
\declaretheorem{assumption}
\renewcommand{\equationautorefname}{Equation}
\def\equationautorefname~#1\null{%
 equation~(#1)\null
}
\numberwithin{equation}{section}
\title{2-Wasserstein Approximation via Restricted Convex Potentials\\ with Application to Improved Training for GANs}
\author{Amirhossein Taghvaei\thanks{Coordinated Science Laboratory, UIUC. Email: \texttt{taghvae2@illinois.edu}; 
research performed mostly while at Technicolor AI Lab.} \and Amin Jalali\thanks{Technicolor AI Lab. Email: \texttt{amin.jalali@technicolor.com}.}}
\begin{document}
\maketitle

\begin{abstract}
We provide a framework to approximate the 2-Wasserstein distance and the optimal transport map, amenable to efficient training as well as statistical and geometric analysis. With the quadratic cost and considering the Kantorovich dual form of the optimal transportation problem, the Brenier theorem states that the optimal potential function is convex and the optimal transport map is the gradient of the optimal potential function. Using this geometric structure, we restrict the optimization problem to different parametrized classes of convex functions and pay special attention to the class of input-convex neural networks. We analyze the statistical generalization and the discriminative power of the resulting approximate metric, and we prove a restricted moment-matching property for the approximate optimal map. Finally, we discuss a numerical algorithm to solve the restricted optimization problem and provide numerical experiments to illustrate and compare the proposed approach with the established regularization-based approaches. We further discuss practical implications of our proposal in a modular and interpretable design for GANs which connects the generator training with discriminator computations to allow for learning an overall composite generator.
\end{abstract}


\tableofcontents 
\section{Introduction}
There is a growing interest in application of the optimal transportation theory in machine learning. The main reason is that the optimal transportation theory provides a natural geometry and mathematical tools to view and manipulate probability distributions and perform optimization in this space \citep{ambrosio2008gradient}. In particular, two geometric notions, within the context of optimal transportation, are of key importance in providing these capabilities: (i) {\em a metric} to measure the similarity/discrepancy between probability distributions, i.e., the $L^p$-Wasserstein distance $\W_p(\cdot,\cdot)$ for any $p\geq 1$, and (ii) {\em a map} to transport one distribution to the other, or to interpolate between them, i.e., the optimal transport map. These geometric notions have been successfully employed in a variety of applications. Perhaps, the most well-known application is the use of the metric as a loss function in generative models to learn an underlying probability distribution, in the setting of Generative Adversarial Networks \citep{arjovsky2017wasserstein} or auto-encoders \citep{tolstikhin2017wasserstein}. The transport map is used in various applications such as in domain adaptation to adapt a learned classifier to the data from a new domain \citep{courty2017joint,courty2015optimal}, for uncoupled isotonic regression \citep{rigollet2018uncoupled}, in Bayesian inference to transport samples from the prior to the posterior distribution \citep{el2012bayesian,reich2013nonparametric}, in texture mapping for surfaces in medical imaging \citep{dominitz2010texture,rabin2011wasserstein}, in style transfer for images transferring the color distribution of one image to another \citep{ferradans2014regularized}, and in interpolating between shapes \citep{su2015optimal}, among many more applications. For a review on applications of the optimal transportation in image processing see \citet{kolouri2017optimal}. 

In the continuous settings, computing these quantities for two given distributions amounts to solving an infinite-dimensional linear program in general; see \cite{peyre2017computational} for a review of computational methods for {\em discrete} optimal transport. In order to apply the optimal transportation theory to modern machine learning tasks, that involve a large number of samples embedded in a high-dimensional space, there is a need for {\em high-quality approximations} that can be computed through fast and scalable algorithms. As evident from the uses of the distance and the map, we would like to get approximations that share the crucial properties of the exact objects. For example, as $\W_2$ is a metric and allows for optimization (e.g., see \citet{ambrosio2008gradient}), we would like its approximations to have similar metric properties and allow for optimization. The same goes for the transport map. 

Most of the existing literature, motivated by GANs, is concerned with large-scale computation and approximation of $f$-divergences \citep{nowozin2016f} or $\W_1$ \citep{arjovsky2017wasserstein}. While existing approaches are suitable for approximating the divergence in large-scale settings, they do not provide a transport map. On the other hand, there is another stream of the literature, motivated by computing the optimal transport map (or the coupling), that is based on a {\em regularized} version of the underlying optimization problem. Namely, the well-known method due to \citet{cuturi2013sinkhorn} considers the optimization problem with entropic regularization and uses the Sinkhorn iteration to solve it. 
This is a discrete method but extensions to the semi-discrete setting \citep{aurenhammer1998minkowski,levy2018notions,peyre2017computational} and the continuous setting \citep{genevay2016stochastic,seguy2017large} have also been explored. However, an inherent issue with regularized approaches is the presence of bias due to regularization. Moreover, 
changing the regularization function or reducing the regularization parameter has been observed to lead to a high iteration complexity and numerical instability, \citep{schmitzer2016stabilized,dvurechensky2018computational} and \citep[Remark 4.6]{peyre2017computational}, or a high per-iteration cost \citep{blondel2017smooth,peyre2017computational}. 

\subsection{This Paper}
In this work, we are interested in approximating the $L^2$-Wasserstein distance and the optimal transport map through changing the optimization constraints rather than through regularization. The choice of $\W_2$, rather than $\W_1$, provides us with a beautiful geometric structure; e.g., see \citet[Chapter 2]{villani2003topics}, \autoref{sec:background-OT}, or \autoref{app:background-OT}, for necessary background.

\paragraph{Methodology:} According to the well-known result due to Brenier \citep[Theorem 2.12]{villani2003topics}, the {\em optimal potential function} for the dual optimal transportation problem is {\em convex} and the optimal transport map is readily given by the {\em gradient of the optimal potential function}. Therefore, to get an approximation for the distance and the map, we propose to restrict the optimization problem to subsets of the class of convex functions. For this restriction, we propose to use the powerful class of input-convex neural network \citep{amos2016input} in practice; a neural network architecture which ensures convexity in the input variable through convex monotone activations and positive weights \citep[Section 3.2]{boyd2004convex}. We discuss our methodology in more detail in \autoref{sec:restr-cvx}. We also provide practical implications of our proposal, which lead to suggestions for practice, in \autoref{sec:practical}, and comparisons with existing strategies in \autoref{sec:prior}.

\paragraph{Theoretical Properties:}
In a study of such approximations, a natural question is the tradeoff between the {\em statistical generalization} and the {\em discriminative power} of the approximate divergence. For any parametrized subset $\calF$ of the set of convex functions, we study the approximation $\W_{2,\calF}$ as pertaining to its separating properties resulting to moment matching (\autoref{sec:moment-matching}) as well as its embedding properties compared to $\W_2$ (\autoref{sec:app-ability}). We continue this investigation from a sampling perspective, in \autoref{sec:muN-mu}, and establish statistical generalization bounds for how an empirical distribution of $N$ samples converges to the true distribution in the approximate distance. 
On the other hand, in \autoref{sec:approx-map}, we examine the approximate transport map and establish a moment-matching property when the approximate transport map is used to push one marginal forward to the other. Such a result, for example, has implications in domain adaptation, where one can design the restriction such that certain properties expressed in terms of moment statistics are preserved. Finally, in \autoref{sec:conic-summary}, we consider restriction to subsets of the set of convex functions that are convex cones and develop specialized and stronger results. Such assumption enables a duality framework discussed in full in \autoref{sec:restr-orig}.

\paragraph{Practical Implications:} 
The proposed machinery for computing approximations to $\W_2$ and the optimal transport map provides us with a number of opportunities beyond faster and large-scale computation. In \autoref{sec:homotopy}, we discuss how the parametrized approximation strategy allows for homotopy over possible parametrizations for faster training and better generalization. Moreover, in \autoref{sec:composite-wgan}, we discuss how the access to the inner-workings of the discriminator (namely the optimal map) allows for enhancing the generators learned within GANs through composition with a deterministic optimal map. For this to result in an algorithmic procedure, we discuss how our parametrized strategy provides an efficient approach. 

In \autoref{sec:numerics}, we discuss optimization strategies for solving the proposed problems and use these algorithms to compare our proposal with existing methods from a statistical-computational tradeoff point of view. We relegate all the proofs and extra expositions to appendices.

\subsection{Background on Optimal Transport Theory}\label{sec:background-OT}
The set of non-negative finite measures on $\real^d$ is denoted by $M_+(\real^d)$. 
The set of integrable functions with respect to a probability measure $\mu$ is denoted by $L^1(\mu)$. 
The set of lower-semicontinuous proper convex functions on $\real^d$ is denoted by~$\cvx(\real^d)$. 
For any $f \in \cvx(\real^d)$, the convex conjugate is given by $f^\star(y)=\sup_{x \in \real^d} \left[\langle x, y\rangle - f(x)\right]$. 
The gradient mapping for a differentiable function $f$ with respect to $x$ is denoted by $\nabla_x f(\cdot)$.

Let $\mu$ and $\nu$ be two probability distributions on $\real^d$ with finite second-order moments. 
The optimal transportation problem with quadratic cost, and its Kantorovich dual form \citep[Theorem 1.3]{villani2003topics}, are given by 
\begin{align}
\W_2^2(\mu,\nu)
\coloneqq\inf_{\pi \in \Pi(\mu,\nu)}~\int \frac{1}{2}\norm{x-y}^2_2\ud \pi(x,y)
= \sup_{(f,g)\in\calC}~\int f(x)\ud \mu(x) + \int g(y)\ud \nu(y) \label{eq:dual-form-main}
\end{align}
where $\calC \coloneqq \bigl\{(f,g)\in L^1(\mu)\times L^1(\nu):~f(x)+g(y)\leq \frac{1}{2}\norm{x-y}^2_2~\ud \mu \otimes \ud \nu~\text{a.e.}\bigr\}$, $\Pi(\mu,\nu)$ is the set of all joint measures with marginals equal to $\mu$ and $\nu$, and $\W_2(\mu,\nu)$ is the second order Wasserstein distance between $\mu$ and $\nu$. Under the change of variables $f(\cdot) \leftarrow \frac{1}{2}\norm{\cdot}_2^2-f(\cdot)$ and $g(\cdot) \leftarrow \frac{1}{2}\norm{\cdot}_2^2-g(\cdot)$, the dual problem in~\eqref{eq:dual-form-main} can be equivalently expressed as
\begin{equation}
\W_2^2(\mu,\nu)
= \int\frac{1}{2}\norm{x}_2^2 \ud \mu(x) + \int\frac{1}{2}\norm{y}_2^2 \ud \nu(y) - 
\inf_{(f,g) \in \overline{\calC} } ~\bar{J}_{\mu,\nu}(f,g) 
\label{eq:kantorovich-quad}
\end{equation}
where $\bar{J}_{\mu,\nu}(f,g) \coloneqq \int f(x)\ud \mu(x) + \int g(y)\ud \nu(y)$, and the new constraint set is
\begin{align*}
\overline{\calC} &\coloneqq \bigl\{(\bar{f},\bar{g})\in L^1(\mu)\times L^1(\nu):~\bar{f}(x)+\bar{g}(y)\geq \langle x, y\rangle~\ud \mu \otimes \ud \nu~\text{a.e.}\bigr\}.
\end{align*} 
The following result is known for the quadratic cost setting \cite[Theorems 2.9~and 2.12]{villani2003topics}.
\begin{theorem}\label{thm:Brenier-main}
	Consider the optimal transportation problem for quadratic cost function~\eqref{eq:kantorovich-quad}. Assume $\mu$ and $\nu$ have finite second order moments but do not necessarily admit a density. Then, 
	\begin{enumerate}[(i)]

		\item \label{thm:Brenier-main-1}
		There exists a pair $(f,f^\star)$, where $f \in \cvx(\real^d)$, that minimizes the dual problem in~\eqref{eq:kantorovich-quad}.

		\item \label{thm:Brenier-main-2}
(Knott-Smith optimality criterion) $\pi \in \Pi(\mu,\nu)$ is optimal for the primal problem iff there exists $f \in \cvx(\real^d)$ such that $\op{Supp}(\pi) \subset \op{Graph}(\partial f)$, or equivalently, $y \in \partial f(x)$ for all $(x,y) \in \op{Supp}(\pi)$. Moreover, the pair $(f,f^\star)$ minimizes the dual problem. 
		
		\item \label{thm:Brenier-main-3}
		(Brenier's theorem) If $\mu$ admits a density with respect to the Lebesgue measure, then the optimal coupling $\pi$ for the primal problem is unique. The optimal coupling is given by $\ud \pi(x,y) = \ud \mu(x) \delta_{y=\nabla f(x)}$ where $f \in \cvx(\real^d)$. 
	\end{enumerate}
\end{theorem}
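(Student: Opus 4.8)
The plan is to re-trace the classical route to Brenier's theorem: deduce all three parts from a single fact, the existence of an optimal coupling for the primal problem, combined with Rockafellar's classical theorem that a cyclically monotone set lies in the graph of the subdifferential of a convex function, while taking the strong duality already recorded in~\eqref{eq:dual-form-main}--\eqref{eq:kantorovich-quad} as a black box. First I would note that an optimal $\pi$ exists: $\Pi(\mu,\nu)$ is tight because its marginals are fixed, hence weakly sequentially compact by Prokhorov, while $\pi\mapsto\int\frac12\norm{x-y}_2^2\ud\pi$ is lower semicontinuous for the weak topology (the integrand is nonnegative and continuous), so a minimizing sequence has a weak limit that is optimal. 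After subtracting the fixed second-moment terms, this same $\pi$ maximizes $\pi\mapsto\int\langle x,y\rangle\ud\pi$ over $\Pi(\mu,\nu)$, which is the primal attached to~\eqref{eq:kantorovich-quad} and whose value is $\inf_{\overline{\calC}}\bar J_{\mu,\nu}$.

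Next I would show $\op{Supp}(\pi)$ is cyclically monotone. The point special to the quadratic cost is that, after expanding squares, $c$-cyclical monotonicity of a set $S$ --- namely $\sum_i\norm{x_i-y_i}_2^2\le\sum_i\norm{x_i-y_{i+1}}_2^2$ for every finite cyclic family in $S$ --- is exactly ordinary cyclical monotonicity $\sum_i\langle x_i,y_i\rangle\ge\sum_i\langle x_i,y_{i+1}\rangle$. That $\op{Supp}(\pi)$ has this property for \emph{any} optimal $\pi$ is the standard contradiction argument: a violating finite family comes, by continuity of the cost, with neighborhoods $U_i\ni x_i$ and $V_i\ni y_i$ on which the strict inequality persists uniformly; since each $\pi(U_i\times V_i)>0$, one removes an $\varepsilon$-fraction of $\pi$ restricted to each $U_i\times V_i$ and re-glues the pieces cyclically (pairing the first marginal of the $i$-th piece with the second marginal of the $(i{+}1)$-st), obtaining a competitor in $\Pi(\mu,\nu)$ of strictly smaller cost. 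Rockafellar's theorem then yields $f\in\cvx(\real^d)$ with $\op{Supp}(\pi)\subset\op{Graph}(\partial f)$, explicitly $f(x)=\sup\bigl\{\langle x-x_n,y_n\rangle+\langle x_n-x_{n-1},y_{n-1}\rangle+\cdots+\langle x_1-x_0,y_0\rangle\bigr\}$ over finite chains in $\op{Supp}(\pi)$.

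Then I would verify $(f,f^\star)\in\overline{\calC}$ and is optimal: the inequality $f(x)+f^\star(y)\ge\langle x,y\rangle$ is Young--Fenchel, valid everywhere, with equality precisely when $y\in\partial f(x)$, i.e. on $\op{Supp}(\pi)$ and hence $\pi$-a.e. For integrability, fix $(x_0,y_0)\in\op{Supp}(\pi)$, so $f(x_0)<\infty$ and $f^\star(y_0)=\langle x_0,y_0\rangle-f(x_0)<\infty$; then $f(x)\ge\langle x,y_0\rangle-f^\star(y_0)$ everywhere (affine lower bound, so the negative part is $\mu$-integrable), while on $\op{Supp}(\pi)$ one has $f(x)=\langle x,y\rangle-f^\star(y)\le\langle x-x_0,y\rangle+f(x_0)$, so $\int f\,\ud\mu=\int f\,\ud\pi$ is finite by the finite second moments of $\mu$ and $\nu$; symmetrically $f^\star\in L^1(\nu)$. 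Consequently $\bar J_{\mu,\nu}(f,f^\star)=\int\bigl(f(x)+f^\star(y)\bigr)\ud\pi=\int\langle x,y\rangle\ud\pi=\inf_{\overline{\calC}}\bar J_{\mu,\nu}$, so $(f,f^\star)$ minimizes the dual --- this is the existence statement, and running the argument on an arbitrary optimal $\pi$ also gives the ``only if'' direction and the ``moreover'' clause of the Knott--Smith criterion. The ``if'' direction is a duality sandwich: if $\op{Supp}(\pi)\subset\op{Graph}(\partial f)$ then $\int\langle x,y\rangle\ud\pi=\bar J_{\mu,\nu}(f,f^\star)\ge\inf_{\overline{\calC}}\bar J_{\mu,\nu}=\sup_{\pi'}\int\langle x,y\rangle\ud\pi'\ge\int\langle x,y\rangle\ud\pi$, forcing equality throughout, so $\pi$ is primal-optimal and $(f,f^\star)$ dual-optimal.

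Finally, for Brenier's statement, assume $\mu$ has a density: a finite convex function is differentiable off a Lebesgue-null, hence $\mu$-null, set, so $\partial f(x)=\{\nabla f(x)\}$ for $\mu$-a.e. $x$; disintegrating an optimal $\pi=\int(\delta_x\otimes\pi_x)\ud\mu(x)$ and using $\op{Supp}(\pi)\subset\op{Graph}(\partial f)$ forces $\pi_x=\delta_{\nabla f(x)}$ for $\mu$-a.e. $x$, i.e. $\ud\pi(x,y)=\ud\mu(x)\,\delta_{y=\nabla f(x)}$; uniqueness follows because the set of optimal plans is convex, so two optimal plans with maps $\nabla f_0,\nabla f_1$ average to an optimal plan of the above form for some convex $f$, whence $\nabla f_0(x),\nabla f_1(x)\in\partial f(x)$, a singleton $\mu$-a.e., giving $\nabla f_0=\nabla f_1$ $\mu$-a.e. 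I expect the two measure-theoretic points to be the real work: making the local re-gluing rigorous (exact preservation of both marginals while strictly lowering the cost), and the integrability bookkeeping, where one must use the finite-second-moment hypothesis --- not any density assumption --- to bound $\int f\,\ud\mu$ and $\int f^\star\,\ud\nu$.
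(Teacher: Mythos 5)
The paper does not prove this theorem at all: it is imported verbatim from \citet[Theorems 2.9 and 2.12]{villani2003topics} (and restated again in \autoref{app:background-OT} as \autoref{thm:Brenier}, still with only the citation), so there is no in-paper argument to compare against. Your reconstruction is correct and follows the standard cyclical-monotonicity route: existence of a primal optimizer by Prokhorov plus lower semicontinuity, $c$-cyclical monotonicity of the support of \emph{any} optimal plan via the local re-gluing contradiction, Rockafellar's theorem to produce the convex potential, Young--Fenchel plus the integrability bookkeeping to place $(f,f^\star)$ in $\overline{\calC}$, a weak-duality sandwich for Knott--Smith, and a.e.\ differentiability plus convexity of the set of optimal plans for Brenier's uniqueness. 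This is a slightly different (and arguably more self-contained) path than Villani's own proof of Theorem 2.12, which invokes the abstract duality theorem to get a dual optimizer first and then upgrades it to a conjugate pair via the double Legendre transform; your version needs only weak duality, since the constructed pair $(f,f^\star)$ certifies strong duality by itself. Two spots you correctly flag as the real work deserve the detail: the re-gluing competitor must be written as $\tilde\pi=\pi-\varepsilon\sum_i\pi_i+\varepsilon\sum_i(p_x)_\#\pi_i\otimes(p_y)_\#\pi_{i+1}$ with the uniform strict inequality integrated against $\prod_i\pi_i$ (not just evaluated at the centers), and in the Brenier step the potential $f$ may be $+\infty$ off a convex set, so one should note that $\mu$ is concentrated on $\{x:\partial f(x)\neq\emptyset\}\subset\op{dom}f$, whose boundary is Lebesgue-null, before invoking a.e.\ differentiability on the interior. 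Neither is a gap, only compressed detail.
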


\section{Proposed Approximation Methodology}\label{sec:restr-cvx}
Consider the setup of \autoref{sec:background-OT} and the optimization problem in~\eqref{eq:kantorovich-quad} for computing $\W_2(\mu,\nu)$; a constrained optimization problem over $\bar{\calC}$. In regularization-based approaches, as in \eqref{eq:reg-continuous}, the constraint set $\bar{\calC}$ is replaced with a penalty term. In this paper, we take a different approach and we approximate the constraint set with sets that are more computationally friendly. Moreover, parallel to the single knob of a regularization parameter, we use a family of approximations to the constraint set allowing for a richer tradeoff between the computational accuracy and efficiency; see \autoref{sec:flexible}. 

Using the Brenier theorem in \autoref{thm:Brenier-main} \cite[Theorem~2.9]{villani2003topics} we can express~\eqref{eq:kantorovich-quad} as
\begin{align}\label{eq:dual-L2-cvx}
\inf_{f,g \in \bar{\calC}}~\bar{J}_{\mu,\nu} (f,g)=\inf_{f\in \cvx(\calX)} ~\bar{J}_{\mu,\nu}(f,f^\star).
\end{align}
Then, we restrict our attention to a parametrized subset of the set of convex functions; namely $\mathcal{F}= \{f(\cdot; \theta):~ \theta\in \Theta\} \subset \cvx(\calX)$ where $\Theta \subset \real^M$ is the parameter set, and for any $\theta \in \Theta$, $f(x) = f(x; \theta)$ is a convex function in~$x$. Denote the Fenchel conjugate with respect to the first input by $f^\star(y;\theta) = \sup_x \langle x, y\rangle - f(x;\theta)$. We now solve a finite-dimensional optimization problem
\begin{align}\label{eq:min-theta}
\inf_{f\in \calF}~\bar{J}_{\mu,\nu}(f,f^\star) =\inf_{\theta \in {\Theta} }~\int f(x;\theta)\ud \mu(x) + \int f^\star(y;\theta)\ud \nu(y)
\end{align} 
where we denote the objective on the right-hand side by ${\tilde{J}}_{\mu,\nu}(\theta)$. 
Then, parallel to~\eqref{eq:kantorovich-quad}, we define the approximate metric (it is not a metric or distance but we abuse the notation here) as
\begin{align}\label{eq:W2F}
\W_{2,\calF}^2 (\mu,\nu) \coloneqq 
\int \frac{1}{2}\norm{x}_2^2 \ud \mu(x) + \int \frac{1}{2}\norm{y}_2^2 \ud \nu(y) - \inf_{\theta\in\Theta} \;\tilde{J}_{\mu,\nu}(\theta) .
\end{align}
Observe that plugging any {\em feasible} point $\theta\in\Theta$ of~\eqref{eq:min-theta} in $\tilde{J}_{\mu,\nu}(\theta)$ provides a valid upper bound for~\eqref{eq:dual-L2-cvx}, which will turn into a lower-bound for the approximate Wasserstein distance in~\eqref{eq:W2F}. Note that $\W_{2,\calF}$ is not necessarily symmetric with respect to its two arguments. Still, one can consider a symmetric version of the form $\W_{2,\calF}(\mu,\nu) + \W_{2,\calF}(\nu,\mu)$ whenever a symmetric approximation is needed. 

Finally, corresponding to the first-order optimality condition in \eqref{eq:W2F} (see \autoref{sec:approx-map} for more details), for each $\bar{\theta}\in \argmin_{\theta\in\Theta} \tilde{J}_{\mu,\nu}(\theta)$, we 
define an approximate transport map, from $\nu$ to~$\mu$, as
\begin{equation}\label{eq:approx-opt-map}
T_{\calF}(y;\bar{\theta}) \coloneqq \nabla_y f^\star(y;\bar{\theta}).
\end{equation}

\begin{remark}
The optimization problem in \eqref{eq:W2F} can be dualized to get a problem over the space of couplings whose marginals dominate $\mu$ and $\nu$ in moments specified through $\calF$. Moreover, it can be shown that strong duality holds when $\calF\subset \cvx(\calX)$ is a convex cone. We also study various metric properties for $\W_{2,\calF}$ in this case, adding to previous studies such as \cite{farnia2018convex}. 
To keep the flow of current discussion, we provide these results in \autoref{sec:restr-orig} with a concise summary in~\autoref{sec:conic-summary}.
\end{remark}
The ability to efficiently optimize $\tilde{J}_{\mu,\nu}(\theta)$ over $\theta\in\Theta$ provides us with both $\W_{2,\calF} (\mu,\nu)$ and $T_{\calF}(y;\bar{\theta}) $. A stochastic function such as $\tilde{J}_{\mu,\nu}(\theta)$ is commonly approximated through a sample average approximation. However, we still need an efficient routine to evaluate $f(\cdot; \theta)$ and an efficient representation for~$\Theta$. Moreover, we would like $\Theta$ to be expressive in parametrizing convex functions. 
Having access to an expressive enough subset of convex functions, with an efficient representation and an efficient associated method of training, allows for implementing the proposed methodology for approximation. 
As examples, one can consider classes of quadratic functions or piecewise-linear-quadratic (PLQ) functions. While these two classes enjoy very nice characterizations (see \autoref{sec:proposed-restr}), class of quadratic functions is not expressive enough, and class of PLQ functions is not easy to implement. Alternatively, we consider input-convex neural networks \citep{amos2016input} for their expressivity and their efficient way of training. Besides the discussions in \autoref{sec:proposed-restr}, we postpone the examination of other parametrized classes of convex functions \citep{helton2010semidefinite,aravkin2013sparse,ahmadi2014dsos,jalali2017variational} to future work.

\paragraph{Input-convex Neural Networks.} 
ICNNs are a class of deep neural networks whose outputs are convex with respect to their inputs. The output of the network is defined recursively according to
\begin{align*}
f(x;\theta) &= h_L \quad \text{where}\quad h_{\ell+1} = \sigma_\ell(W_\ell h_\ell + b_\ell + A_\ell x) ~,~~ \ell = 0,1,\ldots, L-1 ,
\end{align*}
where $x$ is the input, $W_\ell$ and $A_\ell$ are weight matrices (with the convention that $W_0=0$), $b_\ell$ is bias term, and $\sigma_l:\real \to \real$ is the activation function at layer $\ell$. The function $f(x;\theta)$ is convex in $x$ if (i) all the weights $W_\ell$ are positive; and (ii) the activation function $\sigma$ is non-decreasing and convex (e.g., as for ReLU activation and its pointwise square). Note that there is no constraint on weights $A_\ell$ which represent the skip connections going directly from the input to the layer $\ell$. More generally, all convexity preserving operations can be employed to define an input-convex model; e.g., see \citet[Section 3.2]{boyd2004convex} and \citet{grant2008cvx}. 

Let us discuss some factors in architecture design through comparing two architectures. With ReLU activation in all layers, the resulting ICNN will be a piecewise-linear (PL) function. In such network, changing $\sigma_1$ to a ReLU-squared results in a PLQ, and by choosing the width and depth of the network, one can adjust the complexity of the represented PLQ class. The latter architecture is preferred for our purposes for two reasons: 
(i) From a computational perspective: If in addition, the incoming weights are nonzero, e.g., by fixing them to nonzero values, the ICNN becomes strongly convex. Strong convexity allows for a more efficient computation of the Fenchel conjugate, which comes up in the inner-loop of our optimization procedure in \autoref{sec:numerics}. 
(ii) From a statistical perspective: For a PLQ, the resulting transport map in \eqref{eq:approx-opt-map} will be a piecewise affine map. Therefore, the approximation is accurate between distributions that are related to each other with a piecewise affine transformation with a limited number of pieces~(see~\autoref{sec:app-ability}) which is a rich relationship. On the other hand, with a PL function, the range of the transport map is a finite set whose size is bounded by a function of the network's width; which clearly creates generalization issues. 

In \autoref{sec:numerics}, we discuss a stochastic gradient descent procedure for solving \eqref{eq:min-theta}. Therefore, when parametrizing with an ICNN, backpropagation can be used to learn the weights where we project the updates to maintain non-negativity. 

\section{Theoretical Properties}\label{sec:main-results}
In this section, we study the approximation $\W_{2,\calF}$ and the transport map $T_{\calF}$, for any parametrized subset $\calF$ of the set of convex functions. We discuss the results of this section within the context of several restriction classes in \autoref{sec:proposed-restr}. 
We make the following assumption throughout: 
\begin{assumption}\label{assump}
(i) The marginal distributions $\mu$ and $\nu$ are supported on compact sets in $\real^d$. 
(ii) The function $f(x;\theta)$ is differentiable with respect to $\theta$ for all $x\in \mathcal{X}$ and $\nabla_\theta f(x;\theta)$ is continuous with respect to $x$. 
(iii) For all $\theta \in \Theta$, there exists $L(\theta)>0$ and a neighbourhood $U$ around $\theta$ such that $\|\nabla_\theta f(x;\theta')\|_2<L(\theta)$ for all $x\in \mathcal{X}$ and $\theta' \in U$. 
\end{assumption}
For example, 
\autoref{assump}-(ii,iii) holds for ICNNs with differentiable activation functions; e.g., squared ReLU or softplus $\log(1+e^x)$. 

\subsection{Restricted Moment-matching} \label{sec:moment-matching}
Two distributions $\mu$ and $\nu$ are said to have the moment-matching property with respect to a class of functions $\calF$, denoted by $\mu \equiv_\calF \nu$, if $\int f \ud \mu = \int f \ud \nu$ for all $f \in \calF$. This property is important in several signal processing applications when one is interested in operations that preserve certain statistics of the signal \citep{rabin2011removing,rabin2011wasserstein}. It was recently shown that the moment-matching property is achieved through GANs \citep{liu2017approximation,zhang2017discrimination}. In particular, minimizing $\W_{1,\calF}$, as defined in \eqref{eq:L1-restricted}, yields $\mu \equiv_\calF \nu$. Also see \citet{han2018local}. Using this result, in addition to choosing $\calF$ from an expressive class of neural networks in a way that $\text{span}(\calF)$ is dense in the space of continuous functions, makes it possible to prove that $\W_{1,\calF}$ is actually a metric \citep{liu2017approximation,zhang2017discrimination}. In \autoref{prop:W2F-metric}, we study the moment-matching property for $\W_{2,\calF}(\mu,\nu)$. The proof appears in \autoref{apdx:prop-W2F-metric}. For $\calF \subset \cvx(\calX)$ parametrized with $\Theta\subset \mathbb{R}^M$, and for any $\theta=(\theta_1,\ldots,\theta_M) \in \Theta$, define the tangent space as 
\begin{equation}\label{eq:Tan-space}
\Tan_\theta \calF\coloneqq \text{span}\left\{\frac{\partial f}{\partial \theta_m}(\cdot;\theta):~\forall m \in[M] \right\} .
\end{equation} 

\begin{theorem}\label{prop:W2F-metric}
	Let $\Theta_0\coloneqq\{\theta \in \Theta:~f(\cdot;\theta)=\frac{1}{2}\norm{\cdot}^2\}$ and assume it is non-empty. 
	Considering~\eqref{eq:W2F}, 
	\begin{equation*}
	\W_{2,\calF}(\mu,\nu) =0 \quad \implies\quad\mu \equiv_{\Tan_{\theta_0}\calF} \nu,
	\end{equation*}
for any $\theta_0 \in \Theta_0$ that belongs to interior of $\Theta$.
\end{theorem}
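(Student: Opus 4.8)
The plan is to argue that when $\W_{2,\calF}(\mu,\nu)=0$ the distinguished parameter $\theta_0\in\Theta_0$ is a \emph{global minimizer} of $\tilde J_{\mu,\nu}$ over $\Theta$, and then to read the moment-matching identity off the first-order stationarity condition $\nabla_\theta\tilde J_{\mu,\nu}(\theta_0)=0$, which is available because $\theta_0$ is interior. For the reduction: if $\theta_0\in\Theta_0$ then $f(\cdot;\theta_0)=\tfrac12\norm{\cdot}^2$, hence $f^\star(\cdot;\theta_0)=\tfrac12\norm{\cdot}^2$ too, so $\tilde J_{\mu,\nu}(\theta_0)=\int\tfrac12\norm{x}^2\ud\mu(x)+\int\tfrac12\norm{y}^2\ud\nu(y)=:C$. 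Comparing with \eqref{eq:W2F} gives $\W_{2,\calF}^2(\mu,\nu)=C-\inf_{\theta\in\Theta}\tilde J_{\mu,\nu}(\theta)\ge 0$ (since $\theta_0\in\Theta$), with equality exactly when $\theta_0$ attains the infimum. Thus $\W_{2,\calF}(\mu,\nu)=0$ forces $\theta_0$ to be a global minimizer; combined with $\theta_0\in\op{int}\Theta$ and differentiability of $\tilde J_{\mu,\nu}$ at $\theta_0$ (next paragraph), the first-order optimality condition yields $\nabla_\theta\tilde J_{\mu,\nu}(\theta_0)=0$.

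Next I would compute $\nabla_\theta\tilde J_{\mu,\nu}(\theta_0)$ term by term in $\tilde J_{\mu,\nu}(\theta)=\int f(x;\theta)\ud\mu(x)+\int f^\star(y;\theta)\ud\nu(y)$. For the first term, differentiate under the integral sign using \autoref{assump}-(ii,iii) (continuity of $\nabla_\theta f(\cdot;\theta_0)$ and a uniform local bound) and compactness of $\op{supp}\mu$, obtaining $\int\nabla_\theta f(x;\theta_0)\ud\mu(x)$. For the second term, with $f^\star(y;\theta)=\sup_x[\langle x,y\rangle-f(x;\theta)]$, I would invoke an envelope (Danskin) argument: at $\theta_0$ the inner problem $\sup_x[\langle x,y\rangle-\tfrac12\norm{x}^2]$ has the unique maximizer $x^\star=y$, and for $\theta$ near $\theta_0$ the maximizer $x^\star(y;\theta)$ exists, is unique, and varies continuously with $\theta$ (here coercivity near $\theta_0$, present e.g.\ for strongly convex ICNNs, is used), so $\theta\mapsto f^\star(y;\theta)$ is differentiable at $\theta_0$ with $\nabla_\theta f^\star(y;\theta_0)=-\nabla_\theta f(y;\theta_0)$; a second differentiation under the integral (the maximizers $x^\star(y;\theta)$ for $y\in\op{supp}\nu$ and $\theta$ near $\theta_0$ lie in a fixed compact set, so \autoref{assump}-(iii) again supplies domination) gives $-\int\nabla_\theta f(y;\theta_0)\ud\nu(y)$. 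Hence $\nabla_\theta\tilde J_{\mu,\nu}(\theta_0)=\int\nabla_\theta f(x;\theta_0)\ud\mu(x)-\int\nabla_\theta f(y;\theta_0)\ud\nu(y)$, which is precisely the first-order condition underlying \eqref{eq:approx-opt-map} evaluated at $\theta_0$, once one recalls $T_\calF(y;\theta_0)=\nabla_y f^\star(y;\theta_0)=y$.

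To conclude, stationarity gives $\int\nabla_\theta f(x;\theta_0)\ud\mu(x)=\int\nabla_\theta f(y;\theta_0)\ud\nu(y)$; reading coordinate $m$ this is $\int\tfrac{\partial f}{\partial\theta_m}(\cdot;\theta_0)\ud\mu=\int\tfrac{\partial f}{\partial\theta_m}(\cdot;\theta_0)\ud\nu$ for every $m\in[M]$, and by linearity of the integral the same equality holds for every element of $\op{span}\{\tfrac{\partial f}{\partial\theta_m}(\cdot;\theta_0):m\in[M]\}=\Tan_{\theta_0}\calF$ (all these functions are continuous on the compact supports, hence integrable), i.e.\ $\mu\equiv_{\Tan_{\theta_0}\calF}\nu$.

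The hard part will be the envelope step for the Fenchel-conjugate term: the supremum defining $f^\star(y;\theta)$ ranges over the unbounded $\real^d$, so differentiating $\int f^\star(y;\theta)\ud\nu(y)$ rigorously requires existence, uniqueness and $\theta$-continuity of the maximizer throughout a neighbourhood of $\theta_0$, together with a first-order Taylor remainder for $f(x;\cdot)$ that is uniform over the relevant $x$'s. The identity $f(\cdot;\theta_0)=\tfrac12\norm{\cdot}^2$ pins the maximizer at $\theta_0$ to $x^\star=y$ and provides coercivity there, and \autoref{assump}-(ii,iii) provide the quantitative control needed to propagate this to nearby $\theta$ for the parametrizations of interest; making the neighbourhood argument airtight in full generality is the main technical point to handle with care.
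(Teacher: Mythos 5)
Your proposal is correct and follows essentially the same route as the paper: show that $\W_{2,\calF}(\mu,\nu)=0$ forces every $\theta_0\in\Theta_0$ to be a global minimizer of $\tilde J_{\mu,\nu}$, and then read the moment-matching identity off the first-order stationarity condition at the interior point $\theta_0$. The only difference is cosmetic: the paper delegates the derivative formula to \autoref{prop:J-derivative} (whose proof is precisely your envelope argument, with the maximizer at $\theta_0$ being $x^\star=\nabla_y f^\star(y;\theta_0)=y$), whereas you re-derive that special case inline and correctly flag the neighbourhood control of the conjugate's maximizers as the technical point.
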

The moment-matching property for $\W_{1,\calF}$ is global, in the sense that it holds for all functions $f \in \calF$, whereas the moment-matching property for $\W_{2,\calF}$ is local (restricted to tangent spaces). Therefore, the moment-matching property for $\W_{1,\calF}$ is stronger. Note that if $\calF$ is a convex cone, we have $\Tan_\theta \calF = \calF$ for all $\theta\in\Theta$, and the results for $\W_{1,\calF}$ and $\W_{2,\calF}$ are the same.

\subsection{Embedding and Restricted Approximibality}\label{sec:app-ability}
Low distortion embeddings find applications in various areas; in learning embeddings \citep{courty2017learning}, in devising a k-nearest neighbor strategy, or in forming distance matrices for further statistical analysis (e.g., clustering.) It is also important in GANs to prevent mode-collapse by guaranteeing that the learned generated distribution is close to the underlying real distribution in the exact distance. 

A question of this nature, termed as {\em restricted approximability}, has been posed and answered by \citet{bai2018approximability} for the case of $\W_{1,\calF}$. The main idea is that for any given class of functions $\calF$, there exists a class of distributions $\calD$ such that the approximate distance is accurate for any two distributions belonging to $\calD$. However, their result require assuming densities and the proposed modified approximation (through Gaussian convolutions) for dealing with general distributions is not easy to compute. 

We show the notion of approximability for $\W_{2.\calF}$. For a distribution $\mu$ and a class of functions $\calF \in \cvx(\calX)$, let $\nabla \calF \#\mu \coloneqq \{T \# \mu:~T(x)\in \partial f(x)~\ud \mu\text{-a.e.},~\forall f \in \calF\}$ be a class of distributions generated from the push-forward of $\mu$ by gradients of all functions in~$\calF$. Define the $\W_2$-projection of $\mu$ onto a subset of distributions $\calD$ as $\operatorname{Proj}(\mu;\calD)\coloneqq \argmin_{\nu \in \calD}~\W_2(\mu,\nu)$. In the following result, we prove an upper-bound on the exact metric between $\mu$ and $\nu$, in terms of the distance between $\nu$ and $\operatorname{Proj}(\nu;\nabla \calF \# \mu)$. The proof relies on~\autoref{thm:Brenier-main}-\eqref{thm:Brenier-main-2} and appears in \autoref{apdx:prop-W2F-approximability}.
 
\begin{theorem}\label{prop:W2F-approximability}
	Consider $\W_{2,\calF}(\mu,\nu)$ in~\eqref{eq:W2F} where $\mu$ and $\nu$ do not necessarily admit densities. 
	\begin{enumerate}[(i)]
	\item \label{prop:W2F-approximability-1}
	If $\nu \in \nabla \calF\#\mu $, then $\W_2(\mu,\nu)=\W_{2,\calF}(\mu,\nu)$.
	
	\item \label{prop:W2F-approximability-2}
	Assume $\norm{\nabla f^\star(x)-x}_2\leq c_1\norm{x}_2 + c_2$ for all $x \in \calX$ and for all $f\in \calF$. Then, 
	\begin{equation}\label{eq:W2F-approximability}
	\W_{2,\calF}(\mu,\nu) \leq 
	\W_2(\mu,\nu)~\leq~ \left(\W^2_{2,\calF}(\mu,\nu) + c\epsilon\right)^{1/2} + \epsilon 
	\end{equation}
	where 
	$\epsilon=\W_2(\lambda,\nu)$, 
	$\lambda =\operatorname{Proj}(\nu;\nabla \calF \# \mu)$, 
	$c=\frac{c_1}{2}(\sigma_\nu + \sigma_\lambda) +c_2$, 
	$\sigma_\nu \coloneqq (\int x^2\ud \nu)^{1/2}$, and 
	$\sigma_\lambda \coloneqq (\int x^2\ud \lambda)^{1/2}$. 
	\end{enumerate}
\end{theorem}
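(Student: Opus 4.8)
The plan is to prove the two parts separately, using Brenier/Knott--Smith (\autoref{thm:Brenier-main}) together with the upper-bound characterization of $\W_{2,\calF}$ noted after~\eqref{eq:W2F}: any feasible $\theta\in\Theta$ gives $\tilde J_{\mu,\nu}(\theta)\ge \inf_{(f,g)\in\bar\calC}\bar J_{\mu,\nu}(f,g)$, hence $\W_{2,\calF}(\mu,\nu)\le \W_2(\mu,\nu)$ always; this is the left inequality in~\eqref{eq:W2F-approximability} and it needs no hypothesis. For part~\eqref{prop:W2F-approximability-1}, suppose $\nu=T\#\mu$ with $T(x)\in\partial f(x)$ $\mu$-a.e.\ for some $f\in\calF$. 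Then the coupling $\pi=(\op{id},T)\#\mu$ has marginals $\mu,\nu$ and $\op{Supp}(\pi)\subset\op{Graph}(\partial f)$, so by \autoref{thm:Brenier-main}-\eqref{thm:Brenier-main-2} the pair $(f,f^\star)$ is dual-optimal; evaluating $\bar J_{\mu,\nu}(f,f^\star)$ at this $f\in\calF$ shows $\inf_{\theta}\tilde J_{\mu,\nu}(\theta)$ attains the unrestricted optimum, and therefore $\W_{2,\calF}(\mu,\nu)=\W_2(\mu,\nu)$.

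For part~\eqref{prop:W2F-approximability-2}, the idea is to route through $\lambda=\operatorname{Proj}(\nu;\nabla\calF\#\mu)$, which lies in $\nabla\calF\#\mu$, and apply a triangle-type argument for $\W_2$ combined with part~\eqref{prop:W2F-approximability-1}. Concretely: by part~\eqref{prop:W2F-approximability-1}, $\W_2(\mu,\lambda)=\W_{2,\calF}(\mu,\lambda)$. We then compare $\W_{2,\calF}(\mu,\nu)$ with $\W_{2,\calF}(\mu,\lambda)$: since $\lambda=\nabla f^\star_\lambda\#\,\mu$ for some $f_\lambda\in\calF$ (equivalently the optimal $\bar\theta$ for the pair $(\mu,\lambda)$), plugging that same $\bar\theta$ into $\tilde J_{\mu,\nu}$ as a feasible point gives
\begin{equation*}
\W^2_{2,\calF}(\mu,\nu)\ \ge\ \int\tfrac12\norm{x}^2\ud\mu+\int\tfrac12\norm{y}^2\ud\nu-\tilde J_{\mu,\nu}(\bar\theta),
\end{equation*}
and expanding $\tilde J_{\mu,\nu}(\bar\theta)-\tilde J_{\mu,\lambda}(\bar\theta)=\int f^\star_\lambda(y)\,(\ud\nu-\ud\lambda)(y)$ together with the half-norm terms, one gets $\W^2_{2,\calF}(\mu,\nu)\ge \W^2_{2,\calF}(\mu,\lambda)+\int\big(\tfrac12\norm{y}^2-f^\star_\lambda(y)\big)(\ud\nu-\ud\lambda)(y)$. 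The growth hypothesis $\norm{\nabla f^\star(x)-x}\le c_1\norm{x}+c_2$ controls the integrand $\tfrac12\norm{y}^2-f^\star_\lambda(y)$ relative to $\tfrac12\norm{\nabla f^\star_\lambda(y)-y\cdot(\text{something})}$ — more precisely it bounds the Bregman-type remainder — so that $\big|\int(\tfrac12\norm{y}^2-f^\star_\lambda(y))(\ud\nu-\ud\lambda)\big|\le c\,\W_2(\lambda,\nu)=c\epsilon$ via Kantorovich--Rubinstein with the moment bounds $\sigma_\nu,\sigma_\lambda$. Combining, $\W^2_{2,\calF}(\mu,\nu)\ge \W^2_2(\mu,\lambda)-c\epsilon\ge(\W_2(\mu,\nu)-\epsilon)^2-c\epsilon$, where the last step is the ordinary triangle inequality $\W_2(\mu,\nu)\le\W_2(\mu,\lambda)+\W_2(\lambda,\nu)$; rearranging yields the right inequality in~\eqref{eq:W2F-approximability}.

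The main obstacle I anticipate is the middle estimate: showing that the functional gap $\int(\tfrac12\norm{y}^2-f^\star_\lambda(y))(\ud\nu-\ud\lambda)(y)$ is $O(\epsilon)$ with the stated constant $c=\tfrac{c_1}{2}(\sigma_\nu+\sigma_\lambda)+c_2$. This requires expressing $y\mapsto \tfrac12\norm{y}^2-f^\star_\lambda(y)$ in a form whose $\W_2$-Lipschitz behaviour (or whose difference of integrals against two measures close in $\W_2$) is governed exactly by $\norm{\nabla f^\star_\lambda(y)-y}$, and then feeding in $\norm{\nabla f^\star_\lambda(y)-y}\le c_1\norm{y}+c_2$ and bounding $\int\norm{y}\,\ud(\cdot)$ by the second moments through Cauchy--Schwarz. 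Getting the precise averaged constant $\tfrac12(\sigma_\nu+\sigma_\lambda)$ will likely come from parametrizing the optimal $\W_2$-coupling between $\lambda$ and $\nu$ and integrating the bound along the segment between coupled points; the rest of the argument is bookkeeping with the triangle inequality and the elementary $\sqrt{a+b}\le\sqrt a+\sqrt b$-type manipulations already displayed in the statement.
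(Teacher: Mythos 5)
Your proposal is correct and follows essentially the same route as the paper: part (i) via the Knott--Smith criterion applied to the coupling $(\op{id},T)\#\mu$, and part (ii) by routing through $\lambda=\operatorname{Proj}(\nu;\nabla\calF\#\mu)$, the triangle inequality for $\W_2$, and the estimate $\W^2_{2,\calF}(\mu,\lambda)\le\W^2_{2,\calF}(\mu,\nu)+c\,\W_2(\lambda,\nu)$. The middle estimate you flag as the main obstacle is exactly what the paper delegates to \citet[Proposition 1]{polyanskiy2016wasserstein}, and your sketched proof of it (integrate the bound on $\nabla(\frac{1}{2}\norm{\cdot}^2-f^\star)$ along the optimal $\W_2$-coupling of $\lambda$ and $\nu$, then Cauchy--Schwarz against the second moments) is the standard argument for that result.
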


\begin{remark}
	Further bounding the right-hand side of~\eqref{eq:W2F-approximability} provides 
	\begin{equation*}
	\W_{2,\calF}(\mu,\nu) \leq \W_2(\mu,\nu)~\leq~ \W_{2,\calF}(\mu,\nu) + \min\{\sqrt{c\epsilon}, \frac{c\epsilon} {2\W_{2,\calF}(\mu,\nu)} \}+ \epsilon,
	\end{equation*} 
	which helps in better understanding the asymptotic behavior of the upper-bound as $\epsilon \to 0$. 
\end{remark}

The result of \autoref{prop:W2F-approximability} can be used in design and analysis of generators and discriminators in GAN. In particular, for any given discriminator class $\calF$ and a generated distribution $\mu$, one can compute the class of distributions $\nabla \calF \# \mu$ whose distance to $\mu$ can be accurately approximated. 
 
As an illustrating example, consider the problem of learning a symmetric one-dimensional bimodal delta distribution $\ud \nu = \frac{1}{2}\delta_{\{x=-v\}} + \frac{1}{2}\delta_{\{x=v\}}$. Suppose the generator generates distributions of the form $\ud \mu(x) = \frac{1}{2}\delta_{\{x=-u\}} + \frac{1}{2}\delta_{\{x=u\}}$ where $u\in \real $ is the parameter of the generator. The parameter $u$ is learned by minimizing $\W_{2,\calF}(\mu,\nu)$ where the discriminator function class $\calF \coloneqq \{f:x\mapsto \max(\sigma^2(x-w),\sigma^2(-x-w)):~|w|\leq L\}$ where $\sigma(x)$ is the ReLU function. In this case, we can show that all symmetric bimodal delta distributions $\nu \in \nabla \calF \# \mu$ for all $u,v\in \real$ such that $|u-v|\leq L$ (details appears in \autoref{sec:restr-ICNN}). As a result of \autoref{prop:W2F-approximability}-\eqref{prop:W2F-approximability-1}, $\W_{2,\calF}(\mu,\nu)=\W_2(\mu,\nu)= |u-v|$ is exact. Moreover, if $\ud \nu = (\frac{1}{2}-\alpha)\delta_{\{x=-v\}} + (\frac{1}{2}+\alpha)\delta_{\{x=v\}}$ is slightly varied and does not belong to $\nabla \calF \# \mu$, then \autoref{prop:W2F-approximability}-\eqref{prop:W2F-approximability-2} provides an upper-bound for the error, with $\epsilon \leq 2\alpha|v|$, and $c=|L|$.

\subsection{Statistical Generalization}\label{sec:muN-mu}
Here, we study the generalization properties of the approximate metric. In particular, we are interested in studying the rate of convergence of $\W_{2,\calF}(\mu^{(N)},\mu)$ to zero as $N \to \infty$ where $\mu^{(N)}\coloneqq \frac{1}{N}\sum_{i=1}^N\delta_{X^i}$ is the empirical distribution formed from independent samples $\{X^i\}_{i=1}^N$ from~$\mu$. 

The rate has been known for exact Wasserstein distances; e.g., $O(N^{-\frac{1}{d}})$ for $\W_1$ \citep{dudley1969speed} and $O(N^{-1/(d+4)})$ for $\W_2$ \cite[Section 10.2]{rachev1998mass}. It is implied form the rate that in order to achieve $\epsilon$ error, the number of required samples should increase exponentially with the dimension. In fact, \citet{arora2017generalization} showed that for a Gaussian distribution $\mu$, $\W_{1}(\mu^{(N)},\mu)\gtrsim 1$ with high probability if the number of samples grow polynomially with the dimension. In contrast to the exact Wasserstein distance, the convergence holds for the approximate $L^1$-Wasserstein distance $\W_{1,\calF}$ and approximate $f$-divergences \citep{arora2017generalization,zhang2017discrimination}.
 
Here, we are interested in studying the convergence rate for $\W_{2,\calF}$ and we follow a Rademacher complexity argument similar to \citet{zhang2017discrimination}. The proof of the following result appears in \autoref{apdx:prop-generalization}.

\begin{theorem}\label{prop:generalization}
	Consider$\W_{2,\calF}(\mu,\nu)$ defined in~\eqref{eq:W2F} where $\mu$ and $\nu$ have finite second order moments. For $X^i\sim \mu$ and $Y_j\sim \nu$, let $\mu^{(N)}\coloneqq \frac{1}{N}\sum_{i=1}^N \delta_{X^i}$ and $\nu^{(N)}\coloneqq \frac{1}{N}\sum_{i=1}^N \delta_{Y^i}$. Then,
	\begin{equation}\label{eq:gen-bnd}
	\frac{1}{2}\Expect \left[ \left| \W_{2,\calF}^2(\mu^{(N)},\nu^{(N)}) -\W_{2,\calF}^2(\mu,\nu)\right| \right]~\leq~ \mathcal{R}_N(\frac{1}{2}\norm{\cdot}^2-\calF,\mu) + \mathcal{R}_N(\frac{1}{2}\norm{\cdot}^2-\calF^\star,\nu)
	\end{equation}
	where the expectation is over all possible sample sets $X^1,\ldots,X^N$ (drawn i.i.d.~from $\mu$) and $Y^1,\ldots,Y^N$ (drawn i.i.d.~from $\nu$), and $\mathcal{R}_N(\calF,\mu)$ denotes the Rademacher complexity of the function class $\calF$ with respect to $\mu$ for sample size $N$. 
\end{theorem}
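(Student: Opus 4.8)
The plan is to reduce the claim to a standard Rademacher-complexity bound on the uniform deviation of an expectation of a supremum. Recall from~\eqref{eq:W2F} that $\W_{2,\calF}^2(\mu,\nu)$ equals $\int\tfrac12\norm{x}^2\ud\mu + \int\tfrac12\norm{y}^2\ud\nu$ minus $\inf_{\theta\in\Theta}\tilde J_{\mu,\nu}(\theta)$. When we replace $\mu$ and $\nu$ by their empirical versions $\mu^{(N)},\nu^{(N)}$, the two quadratic-moment terms are replaced by exact sample averages whose expectation is unchanged, so they cancel in the difference $\W_{2,\calF}^2(\mu^{(N)},\nu^{(N)}) - \W_{2,\calF}^2(\mu,\nu)$ after taking expectations — more precisely, it is cleaner to fold the quadratic terms into the potential and observe
\[
\W_{2,\calF}^2(\mu,\nu) \;=\; \sup_{\theta\in\Theta}\Bigl[\int \bigl(\tfrac12\norm{x}^2 - f(x;\theta)\bigr)\ud\mu(x) + \int \bigl(\tfrac12\norm{y}^2 - f^\star(y;\theta)\bigr)\ud\nu(y)\Bigr],
\]
i.e.\ a supremum over $\theta$ of a sum of a $\mu$-expectation of a function from the class $\tfrac12\norm{\cdot}^2 - \calF$ and a $\nu$-expectation of a function from $\tfrac12\norm{\cdot}^2 - \calF^\star$.

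Next I would invoke the standard fact that for any class $\calG$ the quantity $\Expect\bigl[\sup_{g\in\calG}\bigl|\int g\ud\mu^{(N)} - \int g\ud\mu\bigr|\bigr]$ is controlled by (twice) the Rademacher complexity $\calR_N(\calG,\mu)$. The key structural step is that the supremum defining $\W_{2,\calF}^2$ is a supremum of a \emph{sum} of two terms indexed by the \emph{same} $\theta$; since $\sup_\theta(a_\theta + b_\theta) \le \sup_\theta a_\theta + \sup_\theta b_\theta$ and likewise $|\sup_\theta(a_\theta+b_\theta) - \sup_\theta(a'_\theta+b'_\theta)| \le \sup_\theta|a_\theta - a'_\theta| + \sup_\theta|b_\theta - b'_\theta|$, the deviation of $\W_{2,\calF}^2(\mu^{(N)},\nu^{(N)})$ from $\W_{2,\calF}^2(\mu,\nu)$ splits into a $\mu$-part and a $\nu$-part. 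Taking expectations and applying the Rademacher bound to the class $\tfrac12\norm{\cdot}^2 - \calF$ against $\mu$ and to $\tfrac12\norm{\cdot}^2 - \calF^\star$ against $\nu$ gives exactly the right-hand side of~\eqref{eq:gen-bnd}, with the factor $\tfrac12$ on the left coming from the $\tfrac12\norm{\cdot}^2$ normalization carried through both potentials. (One uses here that $\mu,\nu$ having finite second moments makes $\tfrac12\norm{\cdot}^2$, hence the shifted classes, integrable; compactness of supports from \autoref{assump}-(i) is convenient but the statement only asks for finite second moments.)

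The step I expect to require the most care is the symmetrization argument that bounds the two-sample deviation $\Expect\bigl|\W^2_{2,\calF}(\mu^{(N)},\nu^{(N)}) - \W^2_{2,\calF}(\mu,\nu)\bigr|$ by a sum of \emph{one-sample} Rademacher complexities rather than by complexities of the product class. The cleanest route is to introduce the intermediate quantity $\W^2_{2,\calF}(\mu,\nu)$ written with the empirical $\mu^{(N)}$ but population $\nu$ (or vice versa), bound each of the two resulting one-sided differences separately using the sup-of-sums inequality above so the $\nu$-part cancels in one and the $\mu$-part in the other, and then run the usual ghost-sample symmetrization on each one-sided term. One must also check that $f^\star(\cdot;\theta)$, as a function class over $y$, is well enough behaved for its empirical process to be handled — but since $f^\star(y;\theta)$ is just another function of $y$ parametrized by $\theta$, it is simply another function class, and no regularity beyond what is already assumed (and integrability of $\tfrac12\norm{\cdot}^2$ under $\nu$) is needed; the notation $\calF^\star$ in~\eqref{eq:gen-bnd} is defined to be precisely this class. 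I would present the one-sample symmetrization as a cited lemma and spend the bulk of the written proof on the sup-of-sums decomposition, since that is where the specific structure of $\W_{2,\calF}$ enters.
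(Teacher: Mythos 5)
Your proposal matches the paper's proof essentially step for step: both rewrite $\W_{2,\calF}^2$ as a supremum over the class of the sum of a $\mu$-expectation over $\tfrac12\norm{\cdot}^2-\calF$ and a $\nu$-expectation over $\tfrac12\norm{\cdot}^2-\calF^\star$, split the deviation via the sup-of-sums inequality into a $\mu$-part and a $\nu$-part, and finish by taking expectations and applying one-sample Rademacher symmetrization. One small correction: the factor $\tfrac12$ on the left of \eqref{eq:gen-bnd} comes from the factor $2$ in the symmetrization bound (which you correctly flag as ``twice'' the Rademacher complexity earlier in your argument), not from the $\tfrac12\norm{\cdot}^2$ normalization of the potentials.
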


As an example, consider $\calF \coloneqq \{f:x\mapsto \frac{1}{2}\norm{x}^2 + w^\top x:~\norm{w}_2\leq L \}$. Then, computing the Rademacher complexity of the class and using the result of~\autoref{prop:generalization} yields 
$\frac{2L}{\sqrt{N}}(\sqrt{\trace{\Sigma_\mu}} + \sqrt{\trace{\Sigma_\nu}})+ O(\frac{1}{N})$ for the right-hand side of \eqref{eq:gen-bnd} where $\Sigma_\mu = \int x x^\top \ud \mu(x)$ and $\Sigma_\nu = \int x x^\top \ud \nu(x)$. On the other hand, using the analytical solution that is available for this special case, yields $|\W^2_{2,\calF}(\mu^{(N)},\nu^{(N)}) - \W^2_{2,\calF}(\mu,\nu)| \leq \frac{1}{\sqrt{N}} \norm{m_\mu-m_\nu}_2 \sqrt{\trace{\Sigma_\mu}+\trace{\Sigma_\nu}} + O(\frac{1}{N})$ where $m_\mu = \int x \ud \mu(x)$ and $m_\nu = \int x \ud \nu(x)$.

\subsection{Approximate Transport Map}\label{sec:approx-map}
The optimal transport map is approximated in the regularization-based approaches \citep{seguy2017large} by computing the Barycentric projection map from the approximate optimal coupling. However, it is difficult to show that the approximation satisfies any specific properties. In the following, we provide a characterization for the approximate transport map we define in~\eqref{eq:approx-opt-map}. We show that the push-forward of one of the marginals with such approximate map has certain moments that are equal to the moments of the other marginal. The proof follows from the first-order optimality condition in \autoref{prop:J-derivative}.
\begin{theorem}\label{prop:approximate-map}
	Suppose \autoref{assump} holds. Let $\bar{\theta} \in \argmin_{\theta\in \Theta}\tilde{J}(\theta)$ and belongs to interior of $\Theta$ where $\tilde{J}$ is defined in~\eqref{eq:min-theta}. Then,
	\begin{equation*}
	\int g(x) \ud \mu(x) = \int g(T_\calF(y; \bar\theta))\ud \nu(y)
	\end{equation*}
	for all $g \in \Tan_{\bar{\theta}}\calF$. 
\end{theorem}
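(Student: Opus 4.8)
The plan is to combine first-order stationarity of $\bar\theta$ with the closed-form expression for $\nabla_\theta\tilde J_{\mu,\nu}$ furnished by \autoref{prop:J-derivative}, and then read off the claimed identity component by component of $\theta$ and extend by linearity.

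First I would record the gradient formula. Under \autoref{assump}, \autoref{prop:J-derivative} gives that $\theta\mapsto\tilde J_{\mu,\nu}(\theta)$ is differentiable at interior points of $\Theta$ with
\begin{equation*}
\nabla_\theta\tilde J_{\mu,\nu}(\theta) = \int \nabla_\theta f(x;\theta)\ud\mu(x) - \int \nabla_\theta f\bigl(T_\calF(y;\theta);\theta\bigr)\ud\nu(y).
\end{equation*}
Two ingredients lie behind this: (a) differentiation under the integral sign, legitimate because $\mu,\nu$ are compactly supported and $\nabla_\theta f(x;\theta')$ is bounded uniformly in $x$ on a neighbourhood of $\theta$ (\autoref{assump}-(i),(iii)) and continuous in $x$ (\autoref{assump}-(ii)); and (b) an envelope (Danskin) argument for $f^\star(y;\theta)=\sup_x[\langle x,y\rangle-f(x;\theta)]$, which yields $\nabla_\theta f^\star(y;\theta)=-\nabla_\theta f(x^\ast;\theta)$ at any maximizer $x^\ast$, and the Fenchel relation $y\in\partial_x f(x^\ast;\theta)$ identifies the maximizer as $x^\ast=\nabla_y f^\star(y;\theta)=T_\calF(y;\theta)$.

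Second, since $\bar\theta$ minimizes $\tilde J_{\mu,\nu}$ over $\Theta$ and lies in the interior of $\Theta$, stationarity gives $\nabla_\theta\tilde J_{\mu,\nu}(\bar\theta)=0$, that is, for every $m\in[M]$,
\begin{equation*}
\int \frac{\partial f}{\partial\theta_m}(x;\bar\theta)\ud\mu(x) = \int \frac{\partial f}{\partial\theta_m}\bigl(T_\calF(y;\bar\theta);\bar\theta\bigr)\ud\nu(y).
\end{equation*}
By the definition \eqref{eq:Tan-space}, any $g\in\Tan_{\bar\theta}\calF$ is a finite linear combination $g=\sum_m c_m\,\frac{\partial f}{\partial\theta_m}(\cdot;\bar\theta)$; forming the same combination of the $M$ identities above and using linearity of the integral yields $\int g\ud\mu = \int g\bigl(T_\calF(\cdot;\bar\theta)\bigr)\ud\nu$, which is the assertion.

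The only delicate point is step (b): one must know that the supremum defining $f^\star(y;\theta)$ is attained, so that a maximizer exists, and that $\theta\mapsto f^\star(y;\theta)$ is differentiable with the stated derivative — equivalently that $T_\calF(y;\theta)=\nabla_y f^\star(y;\theta)$ is well defined for $\nu$-a.e.\ $y$. This is precisely where compactness of the supports and the regularity of the parametrization enter, and it is exactly what \autoref{prop:J-derivative} is designed to deliver (for instance, for strongly convex ICNNs the conjugate is everywhere differentiable and the maximizer unique, so no ambiguity arises). Granting that proposition, the rest of the argument is just linear algebra.
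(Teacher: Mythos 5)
Your proposal is correct and matches the paper's own argument, which likewise consists of invoking the gradient formula of \autoref{prop:J-derivative}, setting $\nabla_\theta\tilde J_{\mu,\nu}(\bar\theta)=0$ at the interior minimizer, and extending the resulting componentwise identities by linearity to all of $\Tan_{\bar\theta}\calF$. Your added remarks on the envelope argument and attainment of the supremum are exactly the content the paper delegates to the proof of \autoref{prop:J-derivative}, so nothing is missing.
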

As an example, consider $\calF\coloneqq \{f:x\mapsto \sigma^2(w^\top x + b); w \in \real^d, b\in \real \}$ where $\sigma(x)$ is the ReLU function. Then, the approximate transport map preserves the moments generated by $x\sigma(w^\top x + b)$ and $\sigma(w^\top x + b)$ for $(w,b)$ that achieves the minimum. 

\subsection{Further Results on Conic Restrictions}\label{sec:conic-summary}
In this section, we consider the special case where the restriction is over a class of convex functions $\calF$ that from a convex cone. In this special case, strong duality holds for the restricted optimization problem~\eqref{eq:min-theta}. As a result, it is possible to obtain stronger results about the theoretical properties of the approximation. \autoref{prop:conic-all} provides a subset of such results. A more comprehensive treatment is given in \autoref{sec:restr-orig}.

The subset of functions $\calF \subseteq \cvx(\real^d)$ is a convex cone if $\forall f, g \in \calF$ we have $\alpha f + \beta g \in \calF$ for all $\alpha,\beta \geq 0$. Define a {\em preorder} $\preceq_\calF$ (a reflexive and transitive relation) on $M_+(\real^d)$ according to
\begin{align*}
\mu \preceq_\calF {\nu} \quad \Leftrightarrow\quad \int f(x) \ud \mu(x) \leq \int f(x) \ud {\nu}(x),\quad \forall f \in \calF
\end{align*}
for any $\mu,\nu \in M_+(\real^d)$. The proof of the following result is given in \autoref{app:proof-prop:conic-all}.

\begin{theorem}\label{prop:conic-all}
Consider the approximate metric~\eqref{eq:W2F}. Assume $\calF \subset \cvx(\real^d) $ is a convex cone and $\norm{\cdot}_2^2 \in \calF$. Then,
\begin{enumerate}
		\item Duality:
\begin{equation}\label{eq:W2F-duality}
\W^2_{2,\calF}(\mu,\nu) 
\,=\, \inf_{\lambda \preceq_\calF \mu} ~\left[\W_2^2(\lambda,\nu) + \int \frac{1}{2}\norm{x}_2^2 \ud \mu(x) -\int \frac{1}{2}\norm{x}_2^2 \ud \lambda(x) \right] 
\,\geq\, \inf_{\lambda \preceq_\calF \mu}~\W_2^2(\lambda,\nu) .
\end{equation}
	\item Moment matching:
\begin{equation*}
\W_{2,\calF}(\mu,\nu) =0 \quad \iff \quad \mu \succeq_\calF \nu\quad \text{and}\quad \int \norm{x}_2^2\ud \mu(x) = \int \norm{x}_2^2\ud \nu(x).
\end{equation*}
Note that $\W_{2,\calF}$ is not necessarily symmetric with respect to its two arguments. 

\item Embedding (Approximability): If $\nu \in \nabla \calF \# \mu$, then $\W_{2,\calF}(\mu,\nu) = \W_2(\mu,\nu)$. Otherwise, 
\begin{equation*}
\W_{2,\calF}(\mu,\nu)\leq \W_{2}(\mu,\nu) \leq \left(2\W^2_{2,\calF}(\mu,\nu) + 2\epsilon^2 \right)^{1/2} +\epsilon ,
\end{equation*}
where $\epsilon \coloneqq \inf_{\lambda \in \nabla \calF\# \mu} \W_2(\lambda,\nu)$. 
\end{enumerate}
\end{theorem}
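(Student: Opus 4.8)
The plan is to establish the duality identity in part~1 first --- it is the workhorse --- and then read off part~2 as a corollary, while part~3 reduces to the already-proven approximability statement plus a triangle-type estimate.

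\textbf{Part 1 (duality).} For ``$\le$'', fix any $f\in\calF$ and any $\lambda$ with $\lambda\preceq_\calF\mu$. Since $f\in\calF\subset\cvx(\real^d)$ and $\lambda\preceq_\calF\mu$ we have $\int f\ud\lambda\le\int f\ud\mu$, hence $\int f\ud\mu+\int f^\star\ud\nu\ge\int f\ud\lambda+\int f^\star\ud\nu\ge\inf_{g\in\cvx(\real^d)}[\int g\ud\lambda+\int g^\star\ud\nu]$, and by the Kantorovich duality \eqref{eq:kantorovich-quad}--\eqref{eq:dual-L2-cvx} the last quantity equals $\int\frac12\norm{x}_2^2\ud\lambda+\int\frac12\norm{y}_2^2\ud\nu-\W_2^2(\lambda,\nu)$. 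Taking the infimum over $f\in\calF$ and rearranging via the definition \eqref{eq:W2F} gives $\W_{2,\calF}^2(\mu,\nu)\le\W_2^2(\lambda,\nu)+\int\frac12\norm{x}_2^2\ud\mu-\int\frac12\norm{x}_2^2\ud\lambda$ for every feasible $\lambda$, which is ``$\le$''; the trailing inequality $\ge\inf_{\lambda\preceq_\calF\mu}\W_2^2(\lambda,\nu)$ is immediate because $\norm{\cdot}_2^2\in\calF$ forces $\int\norm{x}_2^2\ud\lambda\le\int\norm{x}_2^2\ud\mu$.

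\textbf{Part 1, reverse inequality.} Let $f^\ast\in\calF$ attain $\inf_{f\in\calF}\tilde{J}_{\mu,\nu}$ (existence under \autoref{assump} and the conic setup of \autoref{sec:restr-orig}; otherwise argue with $\varepsilon$-minimizers) and set $\lambda_0\coloneqq\nabla(f^\ast)^\star\#\nu$. Because $\calF$ is a convex cone, $f^\ast+tg\in\calF$ for all $g\in\calF,\ t\ge0$, and $cf^\ast\in\calF$ for all $c\ge0$; writing first-order optimality of $f^\ast$ along these feasible directions and differentiating $t\mapsto\int(f^\ast+tg)^\star\ud\nu$ by Danskin's theorem (the maximizer defining $(f^\ast)^\star(y)$ being $\nabla(f^\ast)^\star(y)$, cf.\ \autoref{prop:J-derivative}) yields $\int g\ud\lambda_0\le\int g\ud\mu$ for all $g\in\calF$, i.e.\ $\lambda_0\preceq_\calF\mu$, together with $\int f^\ast\ud\lambda_0=\int f^\ast\ud\mu$ from the two-sided ray through $f^\ast$. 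Finally, the coupling $(\nabla(f^\ast)^\star,\mathrm{id})\#\nu\in\Pi(\lambda_0,\nu)$ is supported on $\op{Graph}(\partial f^\ast)$ by Fenchel--Young, so by the Knott--Smith criterion \autoref{thm:Brenier-main}-\eqref{thm:Brenier-main-2} it is optimal for $\W_2(\lambda_0,\nu)$ and $(f^\ast,(f^\ast)^\star)$ is dual-optimal; substituting $\W_2^2(\lambda_0,\nu)=\int\frac12\norm{x}_2^2\ud\lambda_0+\int\frac12\norm{y}_2^2\ud\nu-\int f^\ast\ud\lambda_0-\int(f^\ast)^\star\ud\nu$ and using $\int f^\ast\ud\lambda_0=\int f^\ast\ud\mu$ collapses $\W_2^2(\lambda_0,\nu)+\int\frac12\norm{x}_2^2\ud\mu-\int\frac12\norm{x}_2^2\ud\lambda_0$ to $\W_{2,\calF}^2(\mu,\nu)$, proving ``$\ge$''.

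\textbf{Parts 2 and 3.} Part~2 falls out of \eqref{eq:W2F-duality}: if $\mu\succeq_\calF\nu$ and $\int\norm{x}_2^2\ud\mu=\int\norm{x}_2^2\ud\nu$, taking $\lambda=\nu$ (feasible since $\nu\preceq_\calF\mu$) gives $\W_{2,\calF}^2(\mu,\nu)\le0$, while $\W_{2,\calF}^2(\mu,\nu)\ge0$ because both summands in the duality are nonnegative; conversely $\W_{2,\calF}(\mu,\nu)=0$ forces the minimizer $\lambda_0$ above to satisfy $\W_2^2(\lambda_0,\nu)=0$ and $\int\norm{x}_2^2\ud\lambda_0=\int\norm{x}_2^2\ud\mu$, i.e.\ $\lambda_0=\nu$ with matching second moments, and then $\lambda_0\preceq_\calF\mu$ reads $\nu\preceq_\calF\mu$. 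For part~3, the case $\nu\in\nabla\calF\#\mu$ is exactly \autoref{prop:W2F-approximability}-\eqref{prop:W2F-approximability-1}, and $\W_{2,\calF}\le\W_2$ holds in general since $\calF\subset\cvx(\real^d)$; for the remaining bound, pick (near-)optimal $\lambda^\ast\in\nabla\calF\#\mu$ with $\W_2(\lambda^\ast,\nu)=\epsilon$ and $\lambda^\ast=\nabla g\#\mu$, $g\in\calF$, so $\W_2(\mu,\nu)\le\W_2(\mu,\lambda^\ast)+\epsilon$ and $\W_2(\mu,\lambda^\ast)=\W_{2,\calF}(\mu,\lambda^\ast)$ with $g$ a dual-optimal potential (\autoref{prop:W2F-approximability}-\eqref{prop:W2F-approximability-1}); comparing $\W_{2,\calF}^2(\mu,\lambda^\ast)$ to $\W_{2,\calF}^2(\mu,\nu)$ by inserting the feasible $g$ into the $(\mu,\nu)$-problem, transporting $g^\star$ along an optimal $(\lambda^\ast,\nu)$-coupling, and applying convexity of $g^\star$ with Cauchy--Schwarz (the conic relaxed-triangle estimate of \autoref{sec:restr-orig}) yields $\W_{2,\calF}^2(\mu,\lambda^\ast)\le2\W_{2,\calF}^2(\mu,\nu)+2\epsilon^2$, hence $\W_2(\mu,\nu)\le(2\W_{2,\calF}^2(\mu,\nu)+2\epsilon^2)^{1/2}+\epsilon$.

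\textbf{Main obstacle.} The delicate step is ``$\ge$'' in part~1: making rigorous that first-order stationarity over the cone --- interchange of differentiation and integration, Danskin's theorem for $(f^\ast+tg)^\star$, the measurable subgradient selection hidden in $\lambda_0=\nabla(f^\ast)^\star\#\nu$ when $f^\ast$ is not everywhere differentiable, and existence of $f^\ast$ itself --- actually produces a \emph{feasible} competitor $\lambda_0\preceq_\calF\mu$ that attains the bound. Everything downstream (parts~2 and~3) is then bookkeeping on top of \autoref{thm:Brenier-main} and \autoref{prop:W2F-approximability}; the one other genuine input is the cross-term estimate in part~3, whose clean conic constants mirror the relaxed triangle inequality established in \autoref{sec:restr-orig}.
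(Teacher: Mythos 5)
Your overall architecture matches the paper's --- Part 1 is the workhorse, Part 2 is bookkeeping on top of it, Part 3 combines the exactness statement of \autoref{prop:W2F-approximability}-\eqref{prop:W2F-approximability-1} with a triangle estimate --- but your proof of the duality identity itself goes by a genuinely different route. The paper never constructs the optimal $\lambda_0$ by hand: it invokes the conic Fenchel--Rockafellar machinery of \autoref{sec:restr-orig}, writing the primal as $\sup_{\lambda\preceq_\calF\mu}\sup_{\pi\in\Pi(\lambda,\nu)}\int\langle x,y\rangle\ud\pi$ via \autoref{lem:Pi-union}, identifying the dual with $\inf_{f\in\calF}\int f\ud\mu+\int f^\star\ud\nu$ via \autoref{thm:Brenier-restricted}, and closing the gap with the strong duality of \autoref{thm:restricted-duality}. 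That route needs compactness and a Slater-type point but requires no attainment of the infimum over $\calF$ and no differentiability. Your ``$\le$'' direction (restrict, compare to the unrestricted Kantorovich value, take infima) is clean and fully rigorous. Your ``$\ge$'' direction, however, rests on producing a minimizer $f^\ast\in\calF$, differentiating $t\mapsto\int(f^\ast+tg)^\star\ud\nu$ by an envelope argument, and pushing $\nu$ forward by $\nabla(f^\ast)^\star$. You name these obstacles yourself, but they are not cosmetic: the cone $\calF$ need not be closed in any topology in which $\tilde J_{\mu,\nu}$ is coercive, so $f^\ast$ may not exist; with $\varepsilon$-minimizers the one-sided derivative estimate degrades as $\varepsilon/t$ and the limit $t\to0$ yields nothing without additional second-order control; and when $(f^\ast)^\star$ is not differentiable $\nu$-a.e.\ the definition of $\lambda_0$ needs a measurable subgradient selection for which Danskin only gives a directional-derivative inclusion, not the clean identity $\int g\ud\lambda_0\le\int g\ud\mu$. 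As written, the reverse inequality is therefore a genuine gap; the paper's LP-duality argument is precisely the device that avoids it.

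Two smaller points. In Part 2 both you and the paper implicitly assume the infimum in \eqref{eq:W2F-duality} is attained when arguing the forward implication; a limiting sequence $\lambda_n$ with $\W_2(\lambda_n,\nu)\to0$ and matching second moments repairs this in either treatment, so it is not a point of difference. In Part 3, your middle estimate $\W_{2,\calF}^2(\mu,\lambda^\ast)\le 2\W_{2,\calF}^2(\mu,\nu)+2\epsilon^2$ is exactly the right target, but the sketch (``transporting $g^\star$ along an optimal coupling, convexity plus Cauchy--Schwarz'') is not a proof. The paper obtains it in two lines by applying the already-proved formula \eqref{eq:W2F-duality} to $\W_{2,\calF}^2(\mu,\lambda^\ast)$, using $\W_2^2(\tilde\mu,\lambda^\ast)\le 2\W_2^2(\tilde\mu,\nu)+2\W_2^2(\lambda^\ast,\nu)$ inside the infimum, and noting that the nonnegative moment-gap term can be doubled for free. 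Since you have Part 1 in hand by that point, you should simply invoke it there rather than reprove a cross-term bound from scratch.
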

Note that \eqref{eq:W2F-duality} can be alternatively expressed as 
\[
\W^2_{2,\calF}(\mu,\nu) =\int \frac{1}{2}\norm{x}_2^2 \ud \mu(x) -
\sup_{\lambda \preceq_\calF \mu} ~\left[\int \frac{1}{2}\norm{x}_2^2 \ud \lambda(x) - \W_2^2(\lambda,\nu) \right]. 
\]

One of the insightful examples of a parameterized class of functions that also form a convex cone is the class of convex quadratic functions. We discuss this class in detail in~\autoref{sec:restr-quad}. The class of quadratic functions is also studied in the context of GAN by \citet{feizi2017understanding}. .

\section{Practical Implications}\label{sec:practical}
In this section, we list a few practical implications of our proposal, namely restricting the dual Kantorovich form to parametrized sets of convex functions for the purpose of approximating $\W_2$ and the optimal transport map.

\subsection{Flexibility in Approximation}\label{sec:flexible}
The proposed approximation strategy provides a great deal of control over the statistical and computational properties of the approximations. Informed by the effects of these choices, characterized in \autoref{sec:main-results}, one can adapt the restriction set to the requirements of the underlying problem in which one wishes to use the approximate metric or the approximate transport map; e.g., \citep{rabin2011removing,rabin2011wasserstein}. This is in contrast with the regularization-based methods in which only special regularization functions can be used (those for which we have fast algorithms, hence by now mostly limited to entropic regularization and $\ell_2$ regularization) and there only is a single knob, namely the regularization parameter $\lambda$ in \eqref{eq:reg-continuous}, that controls the bias, the accuracy, etc. 

\subsection{Faster Optimization via Homotopy; A Progressive Training for GANs}\label{sec:homotopy}
The idea of warm-starting a procedure is prevalent in machine learning; from alleviating the cold-start problem in recommendation systems to regularized loss minimization. For example, in the latter, the idea is to start from a large regularization parameter~$\lambda$ and progressively decrease~$\lambda$. Then, exact homotopy path-following methods \citep{osborne2000lasso,osborne2000new,efron2004least} and approximate homotopy continuation methods \citep{hale2008fixed,xiao2013proximal} provide low iteration complexity as well as low per-iteration cost in convex optimization. As discussed in \autoref{sec:flexible}, we have in the proposed framework a (more flexible) way for controlling the complexity of the solution by changing the restriction set (compared to varying $\lambda$ above.) Therefore, in approximating $\W_2$ and the transport map, we can begin with a simple parametrized set (say a simple ICNN) and use the optimal parametrized function in each stage for warm-starting the optimization process \eqref{eq:W2F} (e.g., training a slightly larger ICNN) in the next stage. In the context of GANs and evaluating the distance within the discriminator, as training goes forward, we can make the discriminator family more complicated and keep the moment-matching property (see \autoref{sec:moment-matching}) along the way. 

\subsection{Enhancing the Generator using the Discriminator; Compositional GANs}\label{sec:composite-wgan}
Brenier theorem provides the optimal transport map as a byproduct of computing the $\W_2$ distance, and can be used in learning generative models as discussed next. Note that computing the optimal transport map is not straightforward when other divergence functions are used which makes this proposal very suitable to the case of $L^2$-Wasserstein distance examined in this paper. 

\newcommand{\new}[1]{\color{blue}#1\color{black}}
Had we were able to solve the Monge's optimal transport problem (given in \eqref{eq:Monge-form}) we could have generated real-looking samples by applying the optimal Monge map (a {\em deterministic} function $T_M$ corresponding to a Kantorovich plan $\ud \pi(x,y) = \ud \mu(x) \delta (y=T_M(x))$; as in contrast with a stochastic coupling that may split mass) to the low-dimensional Gaussian samples. This is similar to the approach of \cite{mesa2018distributed} which is computationally- and memory-expensive especially when used with real data such as in large-scale image classification tasks. With GANs, we alternatively learn a generator function $\mathcal{G}$ that transforms a low-dimensional Gaussian distribution $\gamma$ into a distribution $\mathcal{G}\#\gamma$ that is as close (in a sense specified by a divergence) to the high-dimensional data distribution $\rho$ as possible. 
 
Now, with a GAN-based approach, suppose that we have found a {\em deterministic} optimal transport map $T$ when transporting the outputs of the generator (samples from $\mathcal{G}\#\gamma$) to real samples (from $\rho$) by solving the Kantorovich problem \eqref{eq:dual-form-main}. Then, {\em the composition of this map (which is implementable as a function) with the generator} can be applied to the Gaussian samples, namely samples from $(T \circ \mathcal{G})\#\gamma$, in order to generate images that are as close as possible in distribution to real images (they may not coincide as the generator may not be expressive enough.) This allows for {\em enhancing the learned generator} through no additional efforts in design. If the marginal distributions admit a density or if the optimal $f^\star$ in the dual problem is differentiable, then we get a deterministic map $\nabla f^\star$ from \eqref{eq:approx-opt-map}; see \autoref{thm:Brenier-main}-\eqref{thm:Brenier-main-3} for the former and \autoref{thm:Brenier-main}-\eqref{thm:Brenier-main-2} for the latter. However, even if the map is not deterministic, one can use the optimization problem in \eqref{eq:nablafs} to compute a subgradient which can then be used in the composition; see \autoref{thm:Brenier-main}-\eqref{thm:Brenier-main-2}.  
Here, considering the approximation $\W_{2,\calF}$ allows for guaranteeing a deterministic map, as discussed next, while also being computationally efficient (depending on $\calF$ and $\Theta$). 

Consider a parametrized family $\calF$ of {\em strictly} convex functions (e.g.  $\frac{\eta}{2}\norm{\cdot}_2^2$ added to a family of ICNN) and use the corresponding approximate distance $\W_{2,\calF}$ for the discriminator in GAN. It is well-known that the convex conjugate to a strictly convex function is differentiable. Hence, {\em we get a deterministic transport map as in \eqref{eq:approx-opt-map} by design}. 
Moreover, the distance and the map are now computable, as opposed to the true distance and map, thanks to the approximation machinery. 
The only remaining tradeoff is the speed of convergence in computing $\nabla f^\star$ (solving~\eqref{eq:nablafs}), which depends on how strictly convex the functions in $\calF$ are, and the accuracy of approximations.

In summary, we propose a modular and interpretable understanding for $L^2$-Wasserstein GANs which connects the generator training with discriminator computations (the distance) to allow for learning an overall composite generator. One of the two parts, the one inside the discriminator, represents a convex function for which we know of an extensive analysis. The other part (the generator), thanks to the eventual enhancement via composition, can now be assigned less complexity, allowing for faster training and better interpretability. In fact, with this approach, there is a way for generator and the discriminator to tradeoff each other’s complexity. This tradeoff can also be seen as a more accurate game description for GANs compared to the ``generation and 0/1-discrimination'' picture. 

\paragraph{Post-processing a GAN.}
The above procedure can also be used after a generator has been fully trained: 1) compute $\W_{2,\calF}$ (with $\calF$ prescribed above) between the output of the generator and real samples; a distance computation in a stochastic optimization manner. 2) compose the original generator with the approximate transport map from $\W_{2,\calF}$.

\section{Prior Art}\label{sec:prior}
Here, we provide a brief comparison with existing methods in (mostly) continuous computational optimal transport; through regularization or other approximation techniques. 

\paragraph{Regularization-based Approaches.} 
A family of algorithms consider the primal optimization problem in~\eqref{eq:dual-form-main} with entropic regularization, namely 
\begin{equation*}
\inf_{\pi \in \Pi(\mu,\nu)}~ \int c(x,y)\ud \pi(x,y) + \lambda \int \log(\pi(x,y)) \ud \pi(x,y),
\end{equation*} 
where $\lambda >0$ is the regularization parameter. The dual form of such a problem is given by 
\begin{align}\label{eq:reg-continuous}
\sup_{f,g}~\int f(x) \ud \mu(x) + \int g(y)\ud \nu(y) - \lambda \int \exp\Bigl(\frac{f(x)+g(y) - c(x,y)}{\lambda} \Bigr) \ud\mu(x) \ud \nu(y) .
\end{align}
Compared to the dual form~\eqref{eq:dual-form-main}, the constraint set is removed and a penalty term is added in its place. In the discrete setting, the problem can be solved using the Sinkhorn iteration algorithm or other methods \citep{cuturi2013sinkhorn,dvurechensky2018computational}. In the continuous setting, the optimization may be restricted to a parametrized class of functions, e.g., the RKHS class \citep{genevay2016stochastic} or neural networks \citep{seguy2017large}, and solved using stochastic optimization algorithms. The optimal solutions to~\eqref{eq:reg-continuous} can then be used to get an optimal coupling~$\bar{\pi}$ (e.g., see \citet[Proposition 2.1]{genevay2016stochastic}) which is then used to solve $\min_T\int c(y,T(x))\ud \bar{\pi}(x,y)$ to get a Barycenter projection map. This problem is also solved using a stochastic optimization algorithm where the map $T$ is parametrized as a deep neural network \citep{seguy2017large}. 

The regularization introduces a bias error in estimation. This leads to inexact estimates of the metric and noisy maps that, for example, lead to blurry images for applications in image processing \citep{essid2018quadratically,blondel2017smooth}. Moreover, decreasing the regularization parameter to decrease the bias results in slow convergence and numerical instability; see \citet{schmitzer2016stabilized,dvurechensky2018computational} and \citet[Remark~4.6]{peyre2017computational}. One can replace the entropic regularization with a strictly convex penalty term; e.g., a quadratic \citep{essid2018quadratically}. This has the advantage of producing sparse couplings instead of dense couplings we expect from entropic regularization \citep{blondel2017smooth}. However, the projection step in the optimization algorithm becomes computationally expensive, and this leads to less efficient algorithms compared to the Sinkhorn algorithm \cite[Remark 4.8]{peyre2017computational}.

Furthermore, the barycenter projection map parametrized with a deep neural network as in \citet{seguy2017large}) is inherently continuous, while the exact transport map  for the real data that usually has a complicated support (a non-convex union of low-dimensional manifolds \citep{arjovsky2017wasserstein,guo2019mode}) is not continuous. While a discontinuous map may be approximated by a {\em big enough} network, the aforementioned insight calls for a better modeling approach. In fact, using $\W_2$ and the Brenier theorem proposed in this paper, allows for learning convex (continuous) potentials whose gradient mapping are now to represent the transport map and can be discontinuous.

\paragraph{Approximating the $L^1$-Wasserstein Distance.}
An approximation to $\W_1$ can be defined as
\begin{equation}\label{eq:L1-restricted}
\W_{1,\calF}(\mu,\nu) = \sup_{f \in \calF}~~ \int f(x) \ud \mu(x) - \int f(y) \ud \nu(y)
\end{equation} 
where $\calF$ is a subset of Lipschitz functions from $\calX$ to $\mathbb{R}$. The approximation is exact if $\calF$ contains all $1$-Lipschitz functions; see~Equation (7.1) in \citet{villani2003topics}. In WGAN \citep{arjovsky2017wasserstein}, $\calF$ is chosen to be the set of functions parameterized as neural networks. Since projecting a network onto the set of $1$-Lipschitz functions is not straightforward, various techniques such as constraining the weights to bounded sets have been used \citep{gulrajani2017improved, salimans2018improving,wei2018improving}; but could lead to unused capacity and exploding or vanishing gradients \cite[Section 3]{gulrajani2017improved}. In contrast, in the $L^2$ setting, we work with the set of convex functions, and expressive representations such as ICNNs are easy to project to, namely by thresholding weights by zero. 

Another challenge in solving~\eqref{eq:L1-restricted}, to approximate $\W_1$, 
is that the optimal weights are usually achieved at the boundary of the optimization domain which makes many optimization algorithms slower to converge. Let us make this notion more rigorous via an example. 

\begin{example}\label{ex:interior-L1-L2}
Consider~\eqref{eq:L1-restricted} with $\calF_1\coloneqq \{f:x\mapsto w^\top x;~w\in\mathcal{A}\}$ where $\mathcal{A}$ is compact. Then, 
\[\W_{\calF_1,1}(\mu,\nu) = \sigma_{\mathcal{A}} (m_\mu-m_\nu) \quad\text{and}\quad
w_{\op{opt}}= \argmax_{z\in \mathcal{A}} \,\langle x,z\rangle,\] 
where $m_\mu = \int x \ud \mu(x)$, $m_\nu = \int x \ud \nu(x)$, 
and $\sigma_{\mathcal{A}}(x) \coloneqq \sup_{z\in \mathcal{A}} \,\langle x,z\rangle$ is the support function for $\mathcal{A}$. 
Observe that the optimal weight vector $w_{\op{opt}}$ belongs to the boundary of $\op{conv}(\mathcal{A})$. In contrast, consider~\eqref{eq:min-theta} with $\calF_2\coloneqq\{f:x\mapsto \frac{1}{2}\norm{x}^2+ w^\top x;~w \in \mathcal{A}\}$. Then, 
\[w_{\op{opt}}= \op{Proj}( m; \mathcal{A}) 
\quad\text{and}\quad\W_{\calF_2,2}^2(\mu,\nu) = \frac{1}{2}\|m\|_2^2 - \frac{1}{2}\|m - w_{\op{opt}} \|_2^2,\] for $m = m_\nu - m_\mu$ and where 
$\op{Proj}(x;\mathcal{A}) \coloneqq \argmin_{z\in\mathcal{A}}\|x-z\|_2$ is the orthogonal projection onto~$\mathcal{A}$. Observe that $w_{\op{opt}}$ is not necessarily at the boundary of $\mathcal{A}$; e.g., if $\mathcal{A}$ contains $m$ in its interior. 
\end{example}
\autoref{ex:interior-L1-L2} indicates less sensitivity of the latter method to the choice of $\calF$. Last but not least, in GANs, the $L^2$-Wasserstein distance (or an approximation) has been shown to be beneficial in the study of the dynamics of the generator and obtaining natural gradient flows \citep{lin2019wasserstein,jacob*2019wgan}.

\paragraph{Other Approximation Techniques.} 
Aside from regularization-based approached discussed above, a variety of other approximation methods have been proposed in the literature, including but not limited to: Sliced Wasserstein distance computed from random one-dimensional projections of the data \citep{rabin2011wasserstein,bonneel2015sliced}, fluid-dynamics based approaches \citep{benamou2000computational}, multi-level grid methods \citep{liu2018multilevel}, and embedding methods where the samples are embedded in lower dimensional spaces \citep{courty2017learning}. Computing a transport map (not necessarily optimal) in continuous settings appears in \citet{el2012bayesian,heng2015gibbs,mesa2018distributed}. The approximation of Earth Mover's Distance has also been considered in the literature; \citep{indyk2003fast,shirdhonkar2008approximate}.
%
Approximation of the $L^2$-Wasserstein distance using convex geometric tools appears in \citet{lei2018geometric}. 

Modified constraint sets in the optimal transportation problem have also been studied before. 
In \citet{korman2015optimal} the set of couplings $\Pi(\mu,\nu)$ is constrained to be all joint distributions with marginals $\mu$ and $\nu$ that are dominated with a predefined measure (i.e., a capacity constraint), hence the constraint set becomes smaller. Whereas, the set of couplings studied here (e.g., $\Pi^\calK_\equiv(\mu,\nu)$) are larger than the original $\Pi(\mu,\nu)$ (see \autoref{lem:Pi-union}). The idea of enlarging the feasible space for the primal transport problem, in~\eqref{eq:Kantorovich-form}, has appeared before in other forms; e.g., see \cite{beiglbock2009general}. By restricting the function classes to some $\calF$ and $\calG$, we grow the set of joint distributions to those consistent with $\mu$ and $\nu$ in the more general sense defined in this work. \cite[Section 4.6]{rachev1998mass} discusses the primal optimal transportation problem where the joint distribution is constrained to have certain moments in addition to satisfying marginal constraints. See also \citet{zaev2015monge}.

\citet{guo2019mode} propose to approximate the Brenier potential with piecewise affine functions directly from the given samples; through Alexandrov’s solution to the Minkowski problem. However, such construction, while elegant, seems to be computationally expensive. More specifically, their algorithm based on the solution of \citet{gu2016variational} to the Minkowski problem constructs a piecewise affine Brenier potential with $N$ pieces ($N$ being the number of samples, which can be very big) through a second-order optimization approach (Newton’s method) in which the computation of gradient and the Hessian may require maintaining a triangulation.

\section{Numerical Optimization}\label{sec:numerics}
In evaluating the approximation $\W_{2,\calF}$ or using it within an optimization program, or in computing the approximate transport map, we need to solve the optimization problem~\eqref{eq:min-theta}, namely 
\begin{align*}
\min_{\theta\in\Theta} ~ \tilde J_{\mu,\nu}(\theta)
\end{align*}
which is a finite-dimensional constrained non-convex non-smooth optimization problem. In the above, $\tilde J_{\mu,\nu}(\theta)\coloneqq \int f(x;\theta)\ud \mu(x) + \int f^\star(y;\theta)\ud \nu(y)$ and $\Theta$ may be a non-convex set. Even with all these difficulties, we can still use stochastic first-order methods to find a solution. For such approach to work, we need to compute unbiased estimates of the gradient for the objective. This is given in \autoref{prop:J-derivative}. The analysis is similar to \citet{chartrand2009gradient}, but the derivative is computed with respect to the function, not the parameter. The proof appears in \autoref{apdx:J-derivative}. 
\begin{theorem}\label{prop:J-derivative}
	Consider the objective function in~\eqref{eq:min-theta} and $\Theta\subset \mathbb{R}^M$. Suppose $f(x;\theta)$ is convex in~$x$ and satisfies \autoref{assump}. Then, $\nabla_{\theta_m} \tilde{J}_{\mu,\nu}(\theta) = \int_\calX \nabla_{\theta_m} f(x;\theta) \ud \mu(x) - \int_\calY \nabla_{\theta_m} f(\nabla_y f^\star(y;\theta)) \ud \nu(y)$ for all $m\in[M]$.
\end{theorem}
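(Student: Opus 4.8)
The plan is to differentiate $\tilde J_{\mu,\nu}(\theta) = \int_\calX f(x;\theta)\ud\mu(x) + \int_\calY f^\star(y;\theta)\ud\nu(y)$ term by term, in each case pushing the $\theta_m$-derivative inside the integral, and for the conjugate term invoking an envelope (Danskin-type) argument. For the first term I would show $\nabla_{\theta_m}\int_\calX f(x;\theta)\ud\mu(x) = \int_\calX \nabla_{\theta_m} f(x;\theta)\ud\mu(x)$ by dominated convergence: by \autoref{assump}-(ii) the integrand is differentiable in $\theta$; by \autoref{assump}-(iii) and the mean value theorem the difference quotients $\tfrac1t\bigl(f(x;\theta+te_m)-f(x;\theta)\bigr)$ are bounded by $L(\theta)$ uniformly in $x$ on a neighbourhood $U$ of $\theta$; and by \autoref{assump}-(i) $\mu$ is a probability measure on a compact set, so that constant bound is integrable. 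The same argument reduces the second term to $\int_\calY \nabla_{\theta_m} f^\star(y;\theta)\ud\nu(y)$, once we have established that $f^\star(\cdot;\theta)$ is differentiable in $\theta$ with a $\nu$-integrable (in fact locally bounded on $\calY$) derivative; this is the crux.

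To compute $\nabla_{\theta_m} f^\star(y;\theta)$ I would apply Danskin's theorem to $f^\star(y;\theta) = \sup_{x}\bigl[\langle x,y\rangle - f(x;\theta)\bigr]$. The supremum is attained, and since $y$ ranges only over the compact support of $\nu$, the argmax set can be taken inside a fixed compact subset of $\calX$ (using convexity/coercivity of $f(\cdot;\theta)$); at any $y$ where the maximizer $x^\star(y;\theta)$ is unique, the conjugacy relations give $x^\star(y;\theta) = \nabla_y f^\star(y;\theta)$. Danskin's theorem then yields
\[
\nabla_{\theta_m} f^\star(y;\theta) \;=\; -\,\nabla_{\theta_m} f(x;\theta)\big|_{x = x^\star(y;\theta)} \;=\; -\,\nabla_{\theta_m} f\bigl(\nabla_y f^\star(y;\theta);\theta\bigr),
\]
where the right-hand side is the \emph{partial} $\theta_m$-derivative of $f$ evaluated at the frozen point $x=\nabla_y f^\star(y;\theta)$: the dependence of that point on $\theta$ drops out, which is precisely the envelope phenomenon (formally, differentiating $\langle x^\star,y\rangle - f(x^\star;\theta)$ and using $\nabla_x f(x^\star;\theta)=y$ to cancel the terms coming from $\partial_{\theta_m} x^\star$). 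Substituting this into the second term and combining with the first yields the claimed identity.

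The main obstacle, I expect, is the measure-theoretic bookkeeping around the conjugate term. First, justifying the interchange of $\nabla_{\theta_m}$ and $\int_\calY(\cdot)\ud\nu(y)$ requires a uniform-in-$y$ bound on $\nabla_{\theta_m} f^\star(y;\theta')$ for $\theta'$ near $\theta$; this follows from bounding $\nabla_{\theta_m} f$ over the compact set of $x$-values swept out by the argmaxers as $y$ ranges over $\op{Supp}(\nu)$, invoking \autoref{assump}-(iii) there. Second, the Danskin formula for $\nabla_{\theta_m} f^\star(y;\theta)$ is clean only at $y$ where the maximizer is unique, i.e.\ where the convex function $f^\star(\cdot;\theta)$ is differentiable; this set is Lebesgue-null but need not be $\nu$-null, so one either restricts attention to $\nu$ for which it is, or notes that for the gradient formula it suffices to use any measurable selection $x^\star(y;\theta)$ from the argmax set, which agrees with $\nabla_y f^\star(y;\theta)$ $\nu$-a.e.\ wherever the latter is defined. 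Everything else is routine differentiation under the integral sign, along the lines of \citet{chartrand2009gradient}.
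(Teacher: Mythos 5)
Your proposal is correct and follows essentially the same route as the paper's proof: the same term-by-term decomposition of $\tilde J_{\mu,\nu}$, the same dominated-convergence interchange for both integrals using \autoref{assump}, and the same envelope identity $\nabla_{\theta_m} f^\star(y;\theta) = -\nabla_{\theta_m} f(\nabla_y f^\star(y;\theta);\theta)$ for the conjugate term. The only difference is that the paper proves the Danskin-type step by hand (a two-sided inequality plus a convergent subsequence of maximizers, exploiting compact support and continuity of $\nabla_\theta f$ in $x$) whereas you cite Danskin's theorem; you also correctly flag the Lebesgue-null versus $\nu$-null subtlety in the $y$-differentiability of $f^\star$, a point the paper's proof passes over.
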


Using the above, we propose a numerical algorithm consisting of a nested loop:
\begin{itemize} 
\item 
	{\em An Outer loop,} a stochastic optimization algorithm, to iteratively update the parameter $\theta$ using an unbiased estimate of the derivative given by $ \nabla_\theta f(X_i;\theta) - \nabla_\theta f(\nabla_y f^\star(Y_i;\theta);\theta)$ where $\{X_i\}$ and $\{Y_i\}$ are independent samples from $\mu$ and $\nu$, respectively. It is then projected onto $\Theta$ to maintain feasibility. In practice, we use a batch of samples to sample the gradient. The stochastic nature of this strategy makes it suitable for large-scale settings. 
\item 
	{\em An Inner loop,} to compute the derivative of the convex conjugate $\nabla f^\star(y;\theta) $ via solving the convex program 
	\begin{align} \label{eq:nablafs}
	\nabla f^\star(y;\theta) = \argmax_x ~ \langle y ,x\rangle - f(x;\theta) \end{align} 
	given a value of $\theta\in \Theta$. Standard first- or second-order convex optimization algorithms may be used to solve this problem. In cases where $f(\cdot;\theta)$ admits a variational form (as in PL, PLQ, VGF \cite{jalali2017variational}, \cite{aravkin2013sparse}), saddle point optimization algorithms such as Mirror-prox can provide efficient strategies. 
\end{itemize}

\begin{algorithm}
	\KwData{ $\{X_i\}_{i=1}^N,\{Y_i\}_{i=1}^N$, a schedule of step sizes $\{\eta_k\}_{k=1}^K$, a schedule of batch sizes $\{M_k\}_{k=1}^K$}
	\KwData{ An exact oracle to compute a $g\in \partial f^\star(\cdot)$}
	Initialize $\theta_0$ randomly. 
	\For{$k=1,\ldots, K$}{
		Choose a batch $\{X_i\}_{i=1}^{M_k},\{Y_i\}_{i=1}^{M_k}$ randomly\\
		Compute $\hat{X}_i \in \partial f^\star(Y_i)$ for $i=1,\ldots,M_k$, using the given oracle\\
		Compute $u = \sum_{i=1}^{M_k} \frac{\partial f}{\partial \theta} (X_i, \theta_k) - \frac{\partial f}{\partial \theta} (\hat{X}_i, \theta_k)$ \\
		Update $\theta_{k+1} = \operatorname{Proj}(\theta_k - \eta_k u; \Theta)$ 
	}
	Return $\theta=\theta_K$.
	\caption{Projected SGD with an exact conjugate oracle; the outer loop.}
\end{algorithm}

In practice, we do not compute the derivative of the conjugate function exactly at each step of the outer algorithm. In fact, for each step of the outer loop, we run the inner loop only for a fixed number of steps, starting from the previous point from the previous step. Such a warm-start strategy reduces the computational cost of the algorithm. However, the errors introduced by this approximation are potentially structured and may harm the convergence of a plain SGD for the outer loop in more complicated cases than those with which we experimented. This motivates the use of more complicated variants of SGD and developing further understanding of the effect of such structured bias in the gradients on SGD, which we postpone to future work. 

Finally, the above optimization strategy (for evaluating $\W_{2,\calF}$ given samples) is provided to illustrate the main modules. However, the same modules can be used whenever $\W_{2,\calF}$ appears within an optimization problem. For example, when $\W_{2,\calF}$ is used as a regularization term, the optimization problem \eqref{eq:W2F} can be plugged in, to result in a saddle point optimization.

\subsection{A Numerical Example and Comparison with a Regularization-based Approach}
We provide a comparison between the proposed algorithm with the regularization-based approach proposed in \citet{seguy2017large}. We consider learning the optimal transport map between two mixtures of Gaussians and the results are depicted in \autoref{fig:outer-loop-with-reg}. We use the code provided by the authors, and both algorithms were run for the same number of epochs and per-epoch runtime is reported. Moreover, both algorithms use a 3-layer network of size $(64,128,64)$ with ReLU activations except that our ICNN has ReLU-squared in its first layer. 

\autoref{fig1} depicts the true transport map as well as the transport map learned through regularization with regularization parameters $1.0$ and $0.1$. It is observed that as the regularization parameter becomes smaller, the learned transport map gets closer to the true map. \autoref{fig2} depicts the $\ell_2$ error between the learned and the true maps as a function of the number of samples ($N$). It can be observed that the error from our method converges to zero as $O(N^{-1})$ while the error of the regularized approach (with a fixed regularization parameter) is dominated by the inherent bias due to regularization. \autoref{fig3} depicts the run-time with respect to~$N$ where the runtime of the proposed algorithm scales as $O(N)$ for each epoch, while for the regularized approach it scales as $O(N^2)$, assuming a constant batch size for both. This is due to the fact that the regularization penalty term is not separable in the two marginals, so that at each iteration, the number of required samples scales as $O(N^2)$. Finally, \autoref{fig4} plots the map estimation error against the running time. It can be observed that the proposed method lies to the left and to the bottom of the curve for the regularization-based method, hence improving both the runtime and the map estimation accuracy. 

\begin{figure}
	\centering
	\newcommand{\scl}{.45}
	\begin{tabular}{rr}
		\subfigure[]{\includegraphics[width = \scl\columnwidth]{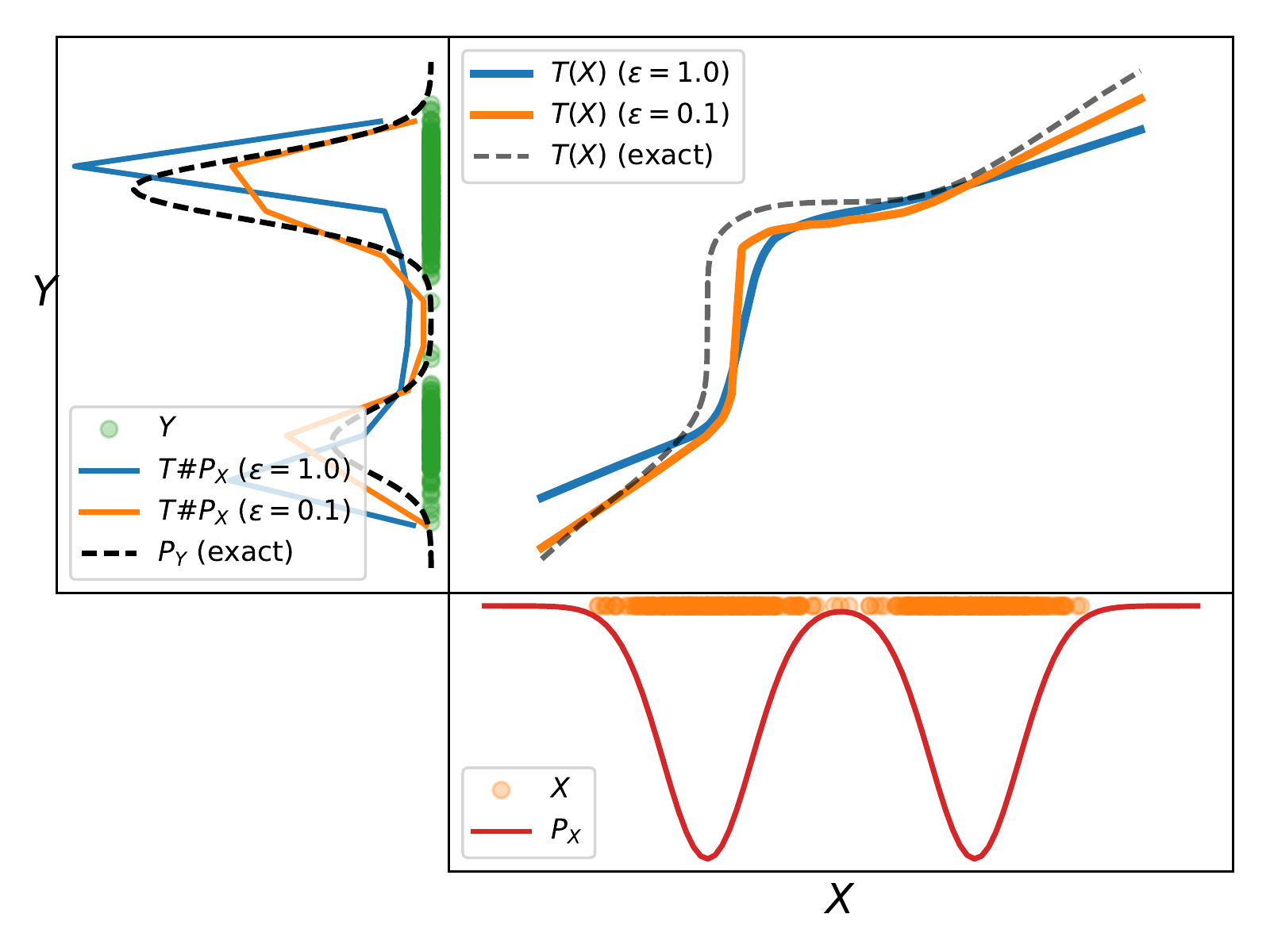}\label{fig1}} &
		\subfigure[]{\includegraphics[width = \scl\columnwidth]{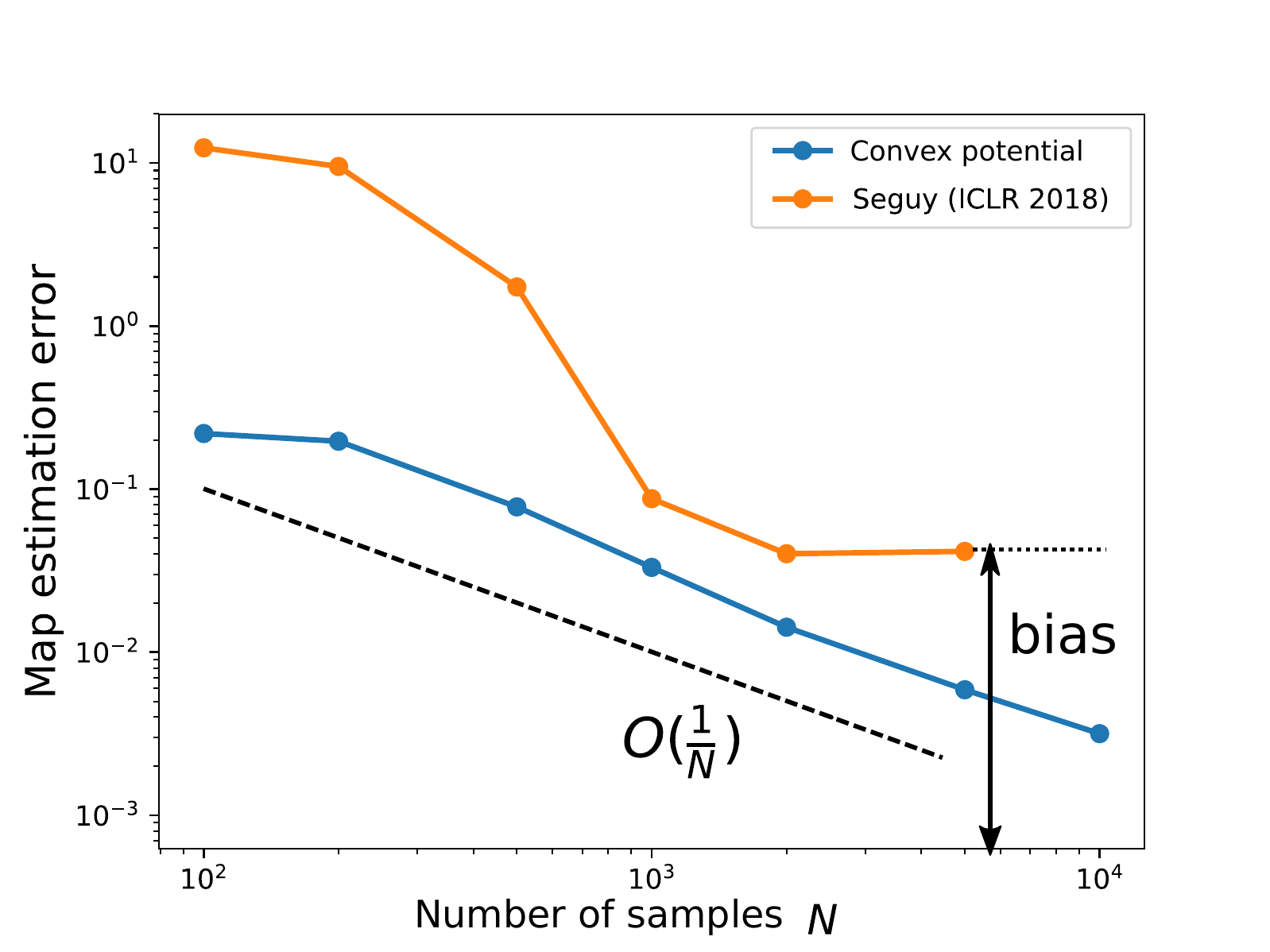}\label{fig2}} \\
		\subfigure[]{\includegraphics[width = \scl\columnwidth]{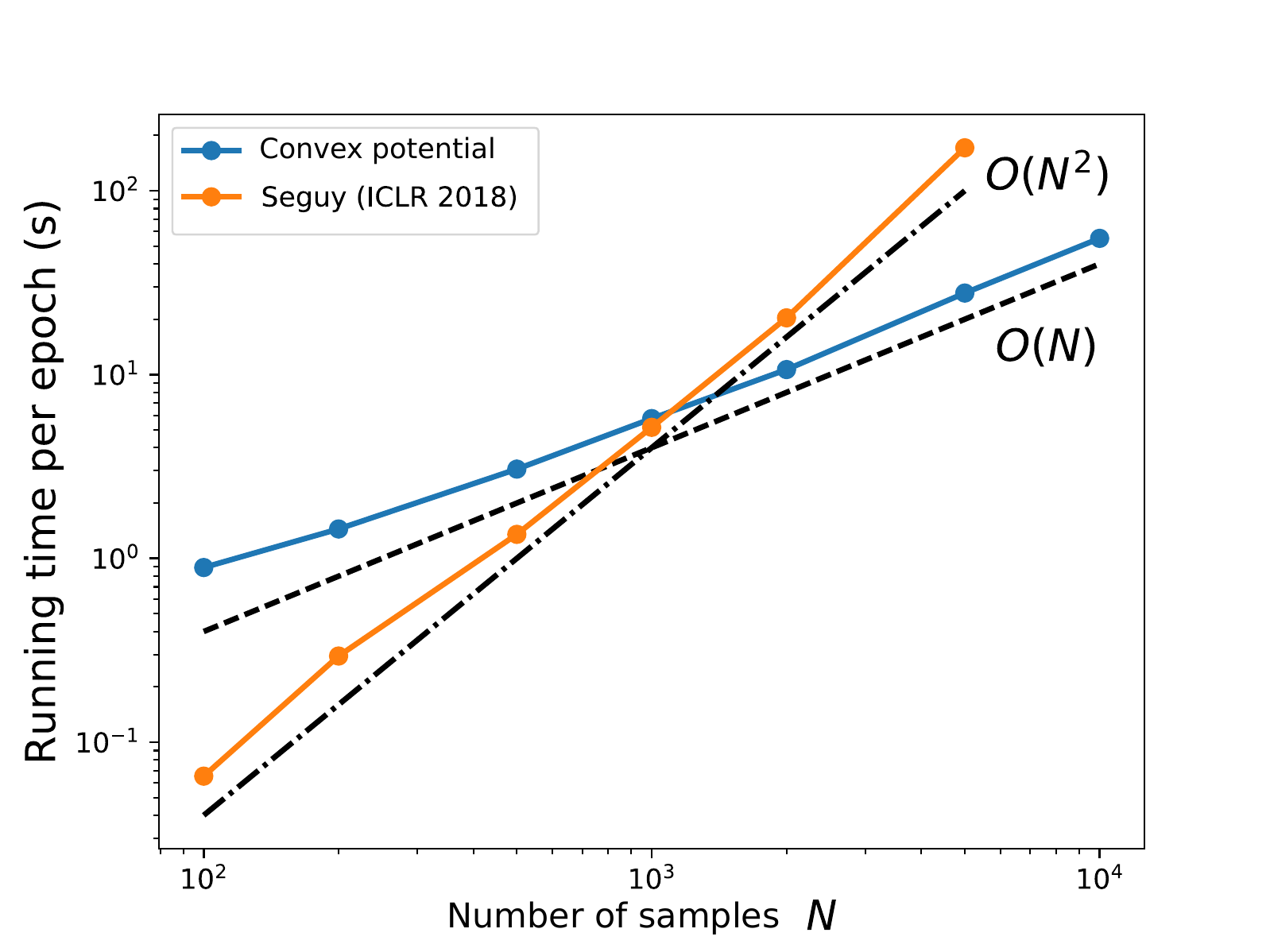}\label{fig3}} &
		\subfigure[]{\includegraphics[width = \scl\columnwidth]{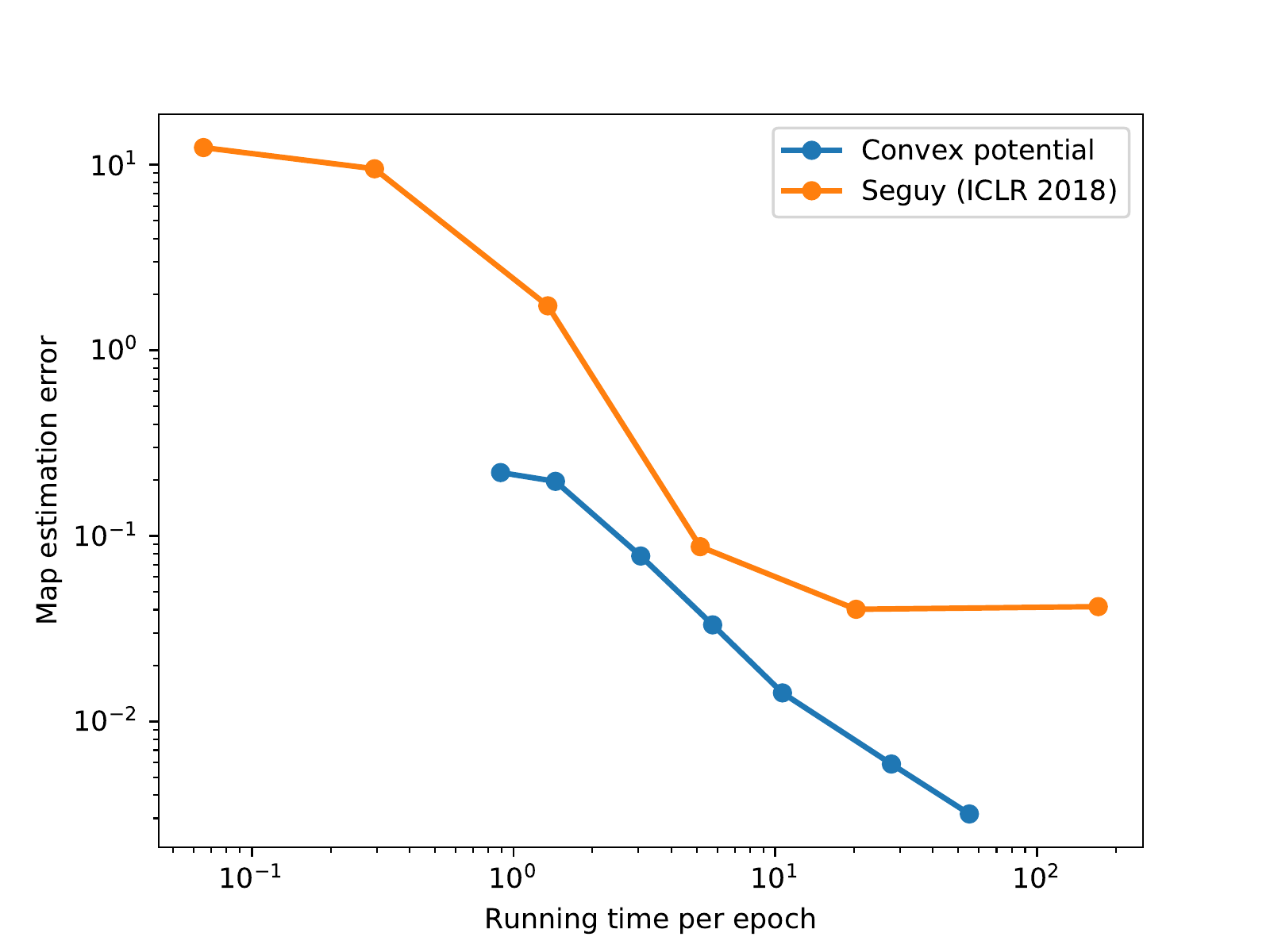}\label{fig4}}		
	\end{tabular}
	\caption{Comparison of the proposed approximation methodology with the regularization-based approach of \citet{seguy2017large}, using similar networks and the same number of epochs. 
	}
	\label{fig:outer-loop-with-reg}
\end{figure}

\newpage 
\addcontentsline{toc}{section}{References}
\bibliographystyle{abbrvnat}
\bibliography{OT-arxiv-refs}

\appendix
\newpage
\section{Background on Optimal Transport Theory}\label{app:background-OT}
This is a more detailed version of the summary provided in \autoref{sec:background-OT}. See \citet{villani2003topics} for a comprehensive overview. 
\subsection{Notation} 

\paragraph{Spaces:} With $\calX$ or $\calY$ we may denote a Polish space (a separable completely metrizable topological space) which maybe compact or not depending on the context. 

\paragraph{Measures:}
The space of Borel probability measures on $\calX$ is denoted by $\calP(\calX)$, the space of finite Borel measures by $M_+(\calX)$, and the space of signed finite Borel measures by $M(\calX)$. 
The set of probability distributions on $\calX$ with finite $p$-th order moments is denoted by $\mathcal{P}_p(\calX)$. 
The set of probability distribution that are absolutely continuous with respect to Lebesgue measure on $\real^n$, and have finite $p$-th order moments is denoted by $\mathcal{P}_{p,\op{ac}}(\real^n)$. 
The set of $d\times d$ positive definite matrices is denoted by $\mathbb{S}^n_{++}$. 
The set of probability distributions that have positive definite covariance matrices is denoted by $\mathcal{P}_{2,+}(\real^n)$.
We work with measures which are not necessarily probability distributions. Therefore, we use the integral notation instead of expectations. 

\paragraph{Functions:}
$C(\calX)$ is the space of continuous functions on $\calX$. $C_b(\calX)$ is the space of bounded continuous functions on $\calX$. They are equipped with the norm $\|\cdot\|_\infty$ where $\|f\|_\infty \coloneqq \sup_{x\in\calX} |f(x)|$ for any $f\in C_b(\calX)$. The value of the gradient of $f$ at point $x$ will be denoted by $\nabla_x f(x)$. 
The set of square integrable functions with respect to a measure $\mu$ is denoted by $L^2(\mu)$. 
The set of convex functions in $C_b(\calX)$ is denoted by~$\cvx(\calX)$. 
For a given function $f$ its convex conjugate is given by $f^\star(y)=\sup_{x} \left[\langle x, y\rangle - f(x)\right]$.

The inner product, on the space that will be clear from the context, is denoted by $\langle \cdot, \cdot\rangle$. 
For a given integer $n \geq 1$, we denote by $[n]$ the set $\{1,\ldots, n\}$.

\subsection{Optimal Transport Problem}
Let $X$ and $Y$ be two random variables on Polish spaces $\mathcal{X}$ and $\mathcal{Y}$ with (Borel) probability measures $\mu$ and $\nu$ respectively. 
The push-forward of a measure $\mu$ under a measurable map $T:\calX \to \calY$ is a measure on $\calY$, denoted by $T\#\mu$, defined according to
\begin{equation*}
(T\#\mu) (A) \coloneqq \mu(T^{-1}(A)),\quad \forall A \in \mathcal{B}(\calY)
\end{equation*}
where $\mathcal{B}(\calY)$ is the $\sigma$-algebra of Borel sets of $\calY$.
The map $T:\calX \to \calY$ is a {\it transport map} from $\mu$ to $\nu$ if $T\#\mu = \nu$. 
In the probabilistic language, $T$ is a transport map if $T(X)$ is equal to $Y$ in distribution. 
Let $\mathcal{T}(\mu,\nu)$ denote the set of all transport maps from $\mu$ to $\nu$. 
In general, there may be infinitely many transport maps between two distributions. The problem of the optimal transportation is to find a transport map that is optimal with respect to a certain cost function. Let $c:\calX \times \calY \to \real^+$ be the cost function. Then, {\em Monge's optimal transport problem} is stated as
\begin{align}
\inf_{T \in \mathcal{T}(\mu,\nu)}~ \int c(x,T(x)) \ud \mu(x)
\label{eq:Monge-form}
\end{align}
and the map that minimizes the optimization problem (if it exists) is called the {\it optimal transport map}. 

The optimal transportation problem is nonlinear and difficult to analyze. Kantorovich introduced a relaxation of the problem by minimizing over couplings of $X$ and $Y$ instead of transport maps from one to the other. A coupling of $X$ and $Y$ is a joint probability distribution $\pi$ on $\calX \times \calY$ such that its marginals are equal to $\mu$ and $\nu$, i.e., 
\begin{equation*}
\pi(A,\calY) = \mu(A),\quad \pi(\calX,B) = \nu(B),\quad \forall A\in\mathcal{B}(\calX),~\forall B \in \mathcal{B}(\calY).
\end{equation*}
The set of all couplings between $X$ and $Y$ is denoted by $\Pi(\mu,\nu)$. 
Then, 
{\em Kantorovich's optimal transport problem} is stated as
\begin{align}
\inf_{\pi \in \Pi(\mu,\nu)}~
\underbrace{\int c(x,y)\ud \pi(x,y)}_{I(\pi)}.
\label{eq:Kantorovich-form}
\end{align}

\subsection{Kantorovich Duality}
The optimization problem~\eqref{eq:Kantorovich-form} is a convex problem, i.e., both the objective and the constraint set are convex, and admits a dual formulation, namely {\em the Kantorovich's dual formulation,} given as 
\begin{equation}
\sup_{(f,g)\in\calC(c)}~\underbrace{\int f(x)\ud \mu(x) + \int g(y)\ud \nu(y)}_{J(f,g)}
\label{eq:dual-form}
\end{equation}
in which the functions $f\in L^1(\mu) $ and $g \in L^1(\nu)$ are the dual variables and $\calC(c)$ denotes the set of all measurable functions $(f,g) \in L^1(\mu) \times L^1(\nu)$ that satisfy the constraint $f(x) + g(y) \leq c(x,y)$ for $\mu$-almost all $x \in \calX$ and $\nu$-almost all $y\in \calY$; i.e.,
\begin{align}\label{eq:def-Phic-set}
\calC(c) \coloneqq \bigl\{(f,g)\in L^1(\mu)\times L^1(\nu):~f(x)+g(y)\leq c(x,y)~\ud \mu \otimes \ud \nu~\text{a.e.}\bigr\}.
\end{align}

\begin{theorem}\cite[Theorem 1.3]{villani2003topics}\label{thm:duality}
	Consider the Kantorovich's optimal transportation problem in~\eqref{eq:Kantorovich-form} and its dual formulation in~\eqref{eq:dual-form}. Assume the cost function $c$ is lower semi-continuous. Then, 
	\begin{equation*}
	\inf_{\pi\in\Pi(\mu,\nu)}\, I(\pi) ~= \sup_{(f,g)\in\calC(c)}\, J(f,g)
	\end{equation*}
	and the infimum on the left-hand side is attained.
\end{theorem}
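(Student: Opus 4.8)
The plan is to follow the classical route: weak duality is elementary, the primal infimum is attained by a compactness argument, and the nontrivial equality of the two values comes from the Fenchel--Rockafellar duality theorem, applied first for bounded continuous costs and then extended to lower semicontinuous $c$ by approximation. For weak duality, for any $\pi\in\Pi(\mu,\nu)$ and any $(f,g)\in\calC(c)$ I would integrate the pointwise inequality $f(x)+g(y)\le c(x,y)$ against $\pi$ and use that the marginals of $\pi$ are $\mu$ and $\nu$ to get $J(f,g)\le I(\pi)$; taking suprema and infima gives $\sup J\le\inf I$. For attainment of $\inf I$ I would use that $\mu$ and $\nu$ are tight on the Polish spaces $\calX,\calY$, hence $\Pi(\mu,\nu)$ is tight and weakly sequentially compact by Prokhorov's theorem, while $\pi\mapsto\int c\,\ud\pi$ is weakly lower semicontinuous because the nonnegative lower semicontinuous $c$ is an increasing limit of bounded continuous functions; so the infimum is attained.

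For strong duality with $c$ bounded and continuous, I would apply Fenchel--Rockafellar on the Banach space $E=C_b(\calX\times\calY)$ with the two convex functionals $\Theta_1(u)=0$ if $u(x,y)\ge -c(x,y)$ everywhere and $+\infty$ otherwise, and $\Theta_2(u)=\int f\,\ud\mu+\int g\,\ud\nu$ if $u(x,y)=f(x)+g(y)$ for some $f\in C_b(\calX)$, $g\in C_b(\calY)$, and $+\infty$ otherwise. After the substitution $f\mapsto -f$, $g\mapsto -g$ one has $\inf_{u\in E}\{\Theta_1(u)+\Theta_2(u)\}=-\sup_{(f,g)\in\calC(c)}J(f,g)$. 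The qualification hypothesis holds at $u_0\equiv 1$: there $\Theta_2$ is finite with value $\mu(\calX)=1$, and $\Theta_1$ is identically zero on the $\|\cdot\|_\infty$-ball of radius $1$ around $u_0$ (since $c\ge 0$), hence continuous there. A direct computation of Legendre transforms gives $\Theta_1^*(-\pi)=\int c\,\ud\pi$ when $\pi$ is a nonnegative measure and $+\infty$ otherwise, and $\Theta_2^*(\pi)=0$ when the marginals of $\pi$ equal $\mu$ and $\nu$ and $+\infty$ otherwise. Fenchel--Rockafellar then yields $-\sup_{\calC(c)}J=\inf_u(\Theta_1+\Theta_2)=\max_\pi\bigl(-\Theta_1^*(-\pi)-\Theta_2^*(\pi)\bigr)=-\inf_{\pi\in\Pi(\mu,\nu)}I(\pi)$, the claimed identity for bounded continuous $c$.

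To extend to an arbitrary nonnegative lower semicontinuous cost $c$, I would write $c=\sup_k c_k$ with $0\le c_k\uparrow c$ and each $c_k\in C_b(\calX\times\calY)$, which is possible on metric spaces. Then $\calC(c_k)\subseteq\calC(c)$ gives $\sup_{\calC(c)}J\ge\sup_{\calC(c_k)}J=\inf_{\pi\in\Pi(\mu,\nu)}\int c_k\,\ud\pi$, and the tightness/Prokhorov argument from the first step, together with $c_k\uparrow c$, shows $\inf_\pi\int c_k\,\ud\pi\uparrow\inf_\pi\int c\,\ud\pi$. Combining with weak duality $\sup_{\calC(c)}J\le\inf_{\pi\in\Pi(\mu,\nu)}I$, all inequalities collapse and $\sup J=\inf I$, with the infimum attained as shown above.

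The main obstacle I anticipate is the functional-analytic bookkeeping in the bounded-continuous step: the maximizing element delivered by Fenchel--Rockafellar a priori lies in the dual $\bigl(C_b(\calX\times\calY)\bigr)^*$, which on a non-compact Polish space is strictly larger than the space of finite Radon measures, so one must argue that $\Theta_2^*(\pi)<\infty$ (i.e., genuine marginals $\mu,\nu$) together with tightness forces $\pi$ to be an honest Radon probability measure — or, alternatively, prove the identity first for compact $\calX,\calY$ via the Riesz representation theorem and lift to the Polish case by exhaustion. A secondary source of care is passing to the limit in the last step when $c$ is unbounded; the approximation $c_k\uparrow c$ must be handled so that both the primal values and the dual values converge, which is exactly where the weak lower semicontinuity of $I$ and compactness of $\Pi(\mu,\nu)$ are used.
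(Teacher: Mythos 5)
Your proposal is correct and follows essentially the same route as the source the paper relies on: the paper states this theorem as a citation to Villani and does not reprove it, but its own Appendix proof of the conic variant (Theorem~\ref{thm:duality-constrained}) is explicitly a modification of that same argument, using Fenchel--Rockafellar on $E=C_b(\calX\times\calY)$ with the same pair of indicator/linear functionals, the same conjugate computations identifying $\Theta^\star$ with $\int c\,\ud\pi$ on nonnegative measures and $\Xi^\star$ with the marginal constraints, and the same continuity qualification. You also correctly identify the two genuine technical points (the dual of $C_b$ on a non-compact Polish space exceeding the Radon measures, and the monotone approximation $c_k\uparrow c$ requiring tightness plus lower semicontinuity of $I$ to pass to the limit), which is exactly how the cited proof handles the general case.
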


\subsection{Wasserstein Distance}
The value of the optimization problem~\eqref{eq:Kantorovich-form} serves as distance between the two probability distributions $\mu$ and $\nu$. If the cost function is chosen to be $c(x,y) = d(x,y)^p$ where $d:\calX \times \calX\to \real^+$ is a metric on $\calX$\footnote{This can be any metric that makes the Polish space a metric space with the same topology. In general, the Polish space is not equipped with a unique metric.} and $p\in[1,\infty)$, then the resulting optimal value of~\eqref{eq:Kantorovich-form} is the Wasserstein distance of order $p$ between $\mu$ and $\nu$, namely
\begin{align*}
\W_p(\mu,\nu)\coloneqq 
\inf_{\pi \in \Pi(\mu,\nu)} ~\left[\int d(x,y)^p\ud \pi(x,y)\right]^{\frac{1}{p}}. 
\end{align*}
It is well-known that $\W_p$ is a metric on $\mathcal{P}_p(\calX)$; e.g., see  \cite[Theorem 7.3]{villani2003topics}.

\paragraph{Distance Cost Function, $p=1$.}
Consider the special case where $p=1$. Then, due to a famous result known as the Kantorovich-Rubinstein theorem, \cite[Theorem 1.14]{villani2003topics}, the dual formulation simplifies to 
\begin{equation*}
\W_1(\mu,\nu) = \sup_f ~
\left\{ \int f(x) \ud \mu(x) - \int f(y) \ud \nu(y)
:~ \|f\|_{\op{Lip}}\leq 1 \right\}
\end{equation*}
where $\|f\|_{\op{Lip}}\coloneqq\sup_{x\neq y}\frac{|f(x)-f(y)|}{d(x,y)}$ is the Lipschitz constant of the function $f$ with respect to the metric~$d$. 

\paragraph{Quadratic Cost Function, $p=2$.}\label{sec:quadratic-cost}
Consider the optimization problem~\eqref{eq:Kantorovich-form} with $\calX = \calY=\real^n$ and quadratic cost function $c(x,y)=\frac{1}{2}\norm{x-y}_2^2$. For this special case, the optimization problem can be rewritten as
\begin{align}
\inf_{\pi\in\Pi(\mu,\nu)}\, I(\pi)
&= \inf_{\pi \in \Pi(\mu,\nu)}~\int \frac{1}{2}\norm{x-y}^2 \ud \pi(x,y) \nonumber\\
&= \frac{1}{2}\int \norm{x}^2 \ud \mu(x)
+ \frac{1}{2}\int \norm{y}^2 \ud \nu(y)
- \sup_{\pi\in\Pi(\mu,\nu)}~\int \langle x,y\rangle \ud \pi(x,y) . \label{eq:primal-quad-transform}
\end{align}
Since the first two terms remain constant for all $\pi\in \Pi(\mu,\nu)$, the primal problem~\eqref{eq:Kantorovich-form} is equivalent to
\begin{align}
&\sup_{\pi\in\Pi(\mu,\nu)}~\int \langle x,y\rangle \ud \pi(x,y) .
\label{eq:primal-form-quadratic}
\end{align}
Similarly, with the changes of variables $\bar{f}(x) = \frac{1}{2}\norm{x}_2^2-f(x)$ and $\bar{g}(y) = \frac{1}{2}\norm{y}_2^2-g(y)$, the corresponding dual problem~\eqref{eq:dual-form} can be reformulated as
\begin{align}
\sup_{(f,g)\in\calC(c)}\, J(f,g)
&= \sup_{(f,g)\in\calC(c)}~\int f(x)\ud \mu(x) + \int g(y)\ud \nu(y) \nonumber\\
&= \frac{1}{2}\int \norm{x}^2 \ud \mu(x)
+ \frac{1}{2}\int \norm{y}^2 \ud \nu(y) 
- \inf_{(\bar{f},\bar{g}) \in \overline{\calC} }~\int \bar{f}(x)\ud \mu(x) + \int \bar{g}(y)\ud \nu(y)
\nonumber
\end{align}
where $\overline{\calC}$ is the set of all measurable functions $(f,g) \in L^1(\mu) \times L^1(\nu)$ that satisfy the constraint $f(x) + g(y) \geq \langle x, y\rangle$ for $\mu$-almost all $x \in \real^n$ and $\nu$-almost all $y\in \real^n$; i.e., 
\begin{align*}
\overline{\calC} &\coloneqq \bigl\{(\bar{f},\bar{g})\in L^1(\mu)\times L^1(\nu):~\bar{f}(x)+\bar{g}(y)\geq \langle x, y\rangle~\ud \mu \otimes \ud \nu~\text{a.e.}\bigr\}.
\end{align*} 
Note that $(f,g)\in\calC(c)$ is equivalent to $(\bar{f},\bar{g})\in\overline{\calC}$. 
We use the bar notation to reflect the change in variable and the reversal of the inequality sign compared to the definition $\calC(\cdot)$ in \eqref{eq:def-Phic-set}. 
Similarly, since the first two terms remain constant, the dual problem~\eqref{eq:dual-form} is equivalent to
\begin{align}
\inf_{(f,g) \in \overline{\calC} }~\underbrace{\int f(x)\ud \mu(x) + \int g(y)\ud \nu(y)}_{\bar{J}(f,g)}
\label{eq:dual-form-quadratic}
\end{align}
The following result is known for the quadratic cost setting \cite[Theorems 2.9~and 2.12]{villani2003topics}.

\begin{theorem}\label{thm:Brenier}
	Consider the optimal transportation problem for quadratic cost function where the primal problem is defined as~\eqref{eq:primal-form-quadratic} and its dual formulations defined as~\eqref{eq:dual-form-quadratic}. Assume $X$ and $Y$ have finite second order moments. Then 
	\begin{enumerate}
		\item There exists a pair $(f,f^\star)$, where $f$ is a lower semi-continuous proper convex function and $f^\star$ is its convex conjugate, that minimizes the the dual optimization problem~\eqref{eq:dual-form-quadratic}.
		
		\item (Knott-Smith optimality criterion) $\pi \in \Pi(\mu,\nu)$ is optimal for the primal problem~\eqref{eq:primal-form-quadratic} iff there exists a lower semi-continuous convex function $f$ such that $\op{Supp}(\pi) \subset \op{Graph}(\partial f)$, or equivalently $y \in \partial f(x)$ for all $(x,y) \in \op{Supp}(\pi)$. Moreover, the pair $(f,f^\star)$ is the minimizer of the dual problem~\eqref{eq:dual-form-quadratic}.
		\item (Brenier's theorem) If $\mu$ admits a density with respect to Lebesgue measure, there exists a unique optimal transport map between $\mu$ and $\nu$. The optimal transport map is given by $T(x) = \nabla f(x)$ for $\ud \mu$-almost all $x$ where $f$ is a convex function. The convex function $f$ minimizes the dual formulation~\eqref{eq:dual-form-quadratic}. 
	\end{enumerate}
\end{theorem}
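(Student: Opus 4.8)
The plan is to derive all three parts from the Kantorovich duality of \autoref{thm:duality}, specialized to the quadratic cost, together with the Legendre--Fenchel calculus. Throughout I work with the reformulated dual $\inf_{(\bar f,\bar g)\in\overline{\calC}}\bar J(\bar f,\bar g)$ and the reformulated primal $\sup_{\pi\in\Pi(\mu,\nu)}\int\langle x,y\rangle\,\ud\pi$.

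For part (1), I would first show that among dual-feasible pairs one may restrict to conjugate convex pairs. The constraint $\bar f(x)+\bar g(y)\geq\langle x,y\rangle$ for all $x,y$ is equivalent to $\bar g\geq\bar f^\star$ pointwise, so replacing $\bar g$ by $\bar f^\star$ preserves feasibility and does not increase $\int\bar g\,\ud\nu$; likewise replacing $\bar f$ by its biconjugate $\bar f^{\star\star}\in\cvx(\real^n)$ preserves feasibility (since $\bar f^{\star\star}(x)+\bar f^\star(y)\geq\langle x,y\rangle$ always) and, as $\bar f^{\star\star}\leq\bar f$, does not increase $\int\bar f\,\ud\mu$. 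Hence it suffices to minimize over pairs $(f,f^\star)$ with $f\in\cvx(\real^n)$. Existence of a minimizer then follows by the direct method: normalize $f$ up to an additive constant, use that finite convex functions are locally Lipschitz with controlled modulus, extract a locally uniformly convergent subsequence (Helly's selection theorem), and pass to the limit using Fatou and lower semicontinuity; together with strong duality (\autoref{thm:duality}) this yields part (1).

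For part (2), let $\pi$ be primal-optimal (it exists by \autoref{thm:duality}) and $(f,f^\star)$ an optimal dual pair from part (1). Strong duality and the marginal constraints give
\[
\int\langle x,y\rangle\,\ud\pi(x,y)=\int f\,\ud\mu+\int f^\star\,\ud\nu=\int\bigl(f(x)+f^\star(y)\bigr)\,\ud\pi(x,y),
\]
so $\int\bigl(f(x)+f^\star(y)-\langle x,y\rangle\bigr)\,\ud\pi=0$ with an everywhere-nonnegative integrand (Fenchel--Young); hence $f(x)+f^\star(y)=\langle x,y\rangle$ for $\pi$-a.e.\ $(x,y)$, equivalently $y\in\partial f(x)$ $\pi$-a.e. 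Since $f$ is lower semicontinuous convex, $\op{Graph}(\partial f)$ is closed, and $\op{Supp}(\pi)$ is the smallest closed set of full $\pi$-measure, so $\op{Supp}(\pi)\subset\op{Graph}(\partial f)$. Conversely, if $\op{Supp}(\pi)\subset\op{Graph}(\partial f)$ for some lower semicontinuous convex $f$, then $(f,f^\star)$ is dual-feasible and $f(x)+f^\star(y)=\langle x,y\rangle$ on $\op{Supp}(\pi)$, so $\int\langle x,y\rangle\,\ud\pi=\int f\,\ud\mu+\int f^\star\,\ud\nu$, which by weak duality forces both $\pi$ and $(f,f^\star)$ to be optimal. (Alternatively, the forward direction follows from $c$-cyclical monotonicity of the support of an optimal plan plus Rockafellar's theorem.)

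For part (3), assume $\mu\ll$ Lebesgue. A finite convex function is locally Lipschitz on the interior of its domain and hence differentiable Lebesgue-a.e.\ (Rademacher), so $\partial f(x)=\{\nabla f(x)\}$ for $\mu$-a.e.\ $x$; by part (2) any optimal $\pi$ then satisfies $y=\nabla f(x)$ $\pi$-a.e., i.e.\ $\pi=(\op{id},\nabla f)\#\mu$, so $T=\nabla f$ is an optimal transport map, and $f$ minimizes the dual by part (2). For uniqueness, given two optimal plans $\pi_0,\pi_1$ the midpoint $\tfrac12(\pi_0+\pi_1)$ is optimal, so part (2) produces a single lower semicontinuous convex $f$ with $\op{Supp}(\pi_0)\cup\op{Supp}(\pi_1)\subset\op{Graph}(\partial f)$, whence $\pi_0=\pi_1=(\op{id},\nabla f)\#\mu$. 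The main obstacle is the measure-theoretic bookkeeping in parts (1)--(2): ensuring the potentials obtained by double conjugation lie in $L^1(\mu)\times L^1(\nu)$ so that $\bar J$ is finite and \autoref{thm:duality} applies (this is where finite second moments enter) and that the compactness/selection step for existence is justified; in part (3) the only subtlety is that $\partial f$ be $\mu$-a.e.\ single-valued, which needs $\mu\ll$ Lebesgue and $f$ finite $\mu$-a.e. These points are treated carefully in \citet[Chapter 2]{villani2003topics}.
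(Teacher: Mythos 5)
The paper does not prove this statement: it is quoted as a known background result with a citation to \citet[Theorems 2.9 and 2.12]{villani2003topics}, so there is no in-paper argument to compare against. Your sketch is a correct outline of the standard proof from that reference --- reduction of the dual to conjugate convex pairs via double conjugation, existence by the direct method, complementary slackness plus Fenchel--Young for the Knott--Smith criterion, and $\mu$-a.e.\ single-valuedness of $\partial f$ plus the midpoint trick for Brenier's theorem --- and you correctly flag the integrability of the conjugated potentials and the compactness/selection step as the places where the real technical work lies.
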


\begin{remark}
	Note that because of~\eqref{eq:primal-quad-transform} and duality, the following relationship holds,
	\begin{equation*}
	\W_2^2(\mu,\nu) = 
	\frac{1}{2} \int \norm{x}^2 \ud \mu(x) + \int \norm{y}^2\ud \nu(y)
	- \inf_{(f,g) \in \overline{\calC} }~ \bar{J}(f,g). 
	\end{equation*}
\end{remark}

\section{Proofs}
\subsection{Proof of Theorem \ref{prop:W2F-metric}}\label{apdx:prop-W2F-metric}
By the definition of the approximate metric~\eqref{eq:W2F}, and the assumption $\W_{2,\calF}(\mu,\nu)=0$, it follows that \[\inf_{\theta \in \Theta} \tilde{J}_{\mu,\nu}(\theta)=\frac{1}{2}\int \norm{x}^2 \ud \mu + \frac{1}{2}\int \norm{y}^2\ud \nu\]
The minimum is achieved for all $\theta_0 \in \Theta_0$ because \[\tilde{J}_{\mu,\nu}(\theta_0)=\bar{J}_{\mu,\nu}(\frac{1}{2}\norm{\cdot}^2,\frac{1}{2}\norm{\cdot}^2) =\frac{1}{2}\int \norm{x}^2 \ud \mu + \frac{1}{2}\int \norm{y}^2\ud \nu.\] 
{ By the first-order optimality condition~\autoref{prop:J-derivative}, all the directional derivatives are zero for all $\theta \in \Theta_0$. Therefore, the result follows.
}

\subsection{Proof of Theorem \ref{prop:W2F-approximability}}\label{apdx:prop-W2F-approximability}
Recall the definitions 
\begin{align}
J_{\mu,\nu}(f,f^\star) &= \frac{1}{2}\int \norm{x}^2\ud \mu(x) + \frac{1}{2}\int \norm{y}^2\ud \nu(y)- \int f(x)\ud \mu - \int f^\star(y) \ud \nu \label{eq:J-def}\\
\W_{2}(\mu,\nu) &= \sup_{f \in \cvx(\calX)}~J^{1/2}_{\mu,\nu}(f) \nonumber
\end{align}

\begin{enumerate}[(i)]
\item By definition, for all $\lambda \in \nabla \calF \# \mu$ there exists $f \in \calF$ and a measurable map $T$ such that $T(x) \in \partial f(x)$ and $\lambda = T\#\mu$. Then, consider the joint distribution $\ud \pi(x,y) = \ud \mu(x)\delta_{y=T(x)}$. The marginals of $\pi$ are equal to $\mu$ and $\lambda$. Also, for all $(x,y) \in \text{supp}(\pi)$ we have $y=T(x)\in \partial f(x)$. 
Therefore, by~\autoref{thm:Brenier-main}, $\pi$ is the optimal coupling between $\mu$ and $\lambda$ and $f$ is the optimal potential function that optimizes the dual problem. Because $f \in \calF$, the restriction to $\calF$ does not change the value of the exact problem. Therefore, 
\begin{equation}\label{eq:W2F-W2-identity}
\W_2(\mu,\lambda) = \W_{2,\calF}(\mu,\lambda)\quad \forall \lambda \in \nabla \calF \# \mu
\end{equation}

\item
For all $\lambda \in \nabla \calF \# \mu$ we have
\begin{align*}
\W_2(\mu,\nu)\leq \W_2(\mu,\lambda) + \W_2(\lambda ,\nu)
=\W_{2,\calF}(\mu,\lambda) + \W_2(\lambda ,\nu)
\end{align*}
where the first line follows from the triangle inequality of $\W_2$, and the second line follows from the identity~\eqref{eq:W2F-W2-identity}. 
Next, we provide upper-bound for $\W_{2,\calF}(\mu,\lambda)$ in terms of $\W_{2,\calF}^2(\mu,\nu)$.
\begin{align*}
\W_{2,\calF}^2(\mu,\lambda)&=\sup_{f \in \calF}~J_{\mu,\lambda}(f,f^\star)
\\
&=\sup_{f \in \calF}~\left[J_{\mu,\nu}(f,f^\star) + \left(\int (\frac{1}{2}\norm{y}^2-f^\star(y)) \ud \lambda(y) - \int (\frac{1}{2}\norm{y}^2-f^\star(y)) \ud \nu(y)\right)\right] \\
&\leq\W^2_{2,\calF}(\mu,\nu) + \sup_{f \in \frac{1}{2}\norm{\cdot}^2-\calF^\star}\left[\int f \ud \lambda - \int f \ud \nu\right]\\
&\leq \W^2_{2,\calF}(\mu,\nu) + c\W_2(\lambda ,\nu)
\end{align*}
where the last inequality follows assumption $\|x-\nabla f^\star(x)\|\leq c_1\|x\|+c_2$ and  \cite[Proposition 1]{polyanskiy2016wasserstein} where $c=(\frac{c_1}{2}\sigma_\nu+\frac{c_1}{2}\sigma_\lambda +c_2)$. Using this result,
\begin{align*}
\W_2(\mu,\nu)&\leq \left[ \W^2_{2,\calF}(\mu,\nu) + c\W_2(\lambda ,\nu)\right]^{1/2} + \W_2(\lambda ,\nu),\quad \forall \lambda \in \nabla \calF \#\mu
\end{align*}
Choosing $\lambda = \text{Proj}(\nu; \nabla \calF \#\mu)$ concludes the result. 
\end{enumerate}

\subsection{Proof of Theorem \ref{prop:generalization}}\label{apdx:prop-generalization}
Denote by $\mathcal{R}_N(\calF,\mu)$ the Rademacher complexity of the function class $\calF$ with respect to $\mu$ for sample size $N$, defined as
\begin{equation*}
R_N(\calF,\mu) \coloneqq \frac{1}{N} \Expect \left[\sup_{f \in \calF}~\sum_{i=1}^N f(X^i)\xi^i \right],
\end{equation*}
where $X^1,\cdots,X^N$ are $N$ i.i.d.~samples from $\mu$, and $\xi^1,\cdots,\xi^N$ are independent Rademacher random variables (taking $+1$ or $-1$, each with probability $1/2$). Here the expectation is over both $\{X^i\}_{i=1}^N$ and the Rademacher random variables $\{\xi^i\}_{i=1}^N$. 

By definition~\eqref{eq:W2F} of $\W_{2,\calF}$ and the notation $J_{\mu,\nu}(f,f^\star)$ defined in~\eqref{eq:J-def}, we have
\begin{align*}
&\left|\W_{2,\calF}^2(\mu^{(N)},\nu^{(N)}) - \W^2_{2,\calF}(\mu,\nu)\right| = \left| \sup_{f\in \calF}~{J}_{\mu,\nu}(f,f^\star) - \sup_{f\in\calF}~{J}_{\mu^{(N)},\nu^{(N)}}(f,f^\star) \right|\\
\quad &\leq \sup_{f\in \frac{1}{2}\norm{\cdot}^2-\calF}~\left|\int f \ud \mu^{(N)} - \int f \ud \mu\right| + \sup_{f\in \frac{1}{2}\norm{\cdot}^2-\calF^\star}~\left|\int f \ud \nu^{(N)} - \int f \ud \nu\right|
\end{align*}
Taking the expectation and using the Rademacher bound concludes the result.

\subsection{Proof of Theorem \ref{prop:J-derivative}} \label{apdx:J-derivative}
	The analysis is similar to \citet{chartrand2009gradient}, but the derivative is computed with respect to the function, not the parameter. 
	Note that
	\begin{align*}
	\nabla_\theta \tilde{J}_{\mu,\nu}(\theta) 
	= \nabla_\theta \left[\int f(x;\theta)\ud \mu(x) + \int f^\star(y;\theta) \ud \nu(y)\right]
	=\nabla_\theta \int f(x;\theta)\ud \mu(x) + \nabla_\theta \int f^\star(y;\theta) \ud \nu(y).
	\end{align*}
	We will show 
	\begin{align}
	\nabla_\theta \int f(x;\theta)\ud \mu(x) &= \int \nabla_\theta f(x;\theta)\ud \mu(x)\label{eq:nabla-theta-f},
	\\ \nabla_\theta \int f^\star(y;\theta) \ud \nu(y)&=\int -\nabla_\theta f(\nabla_y f^\star(y;\theta);\theta)\ud \nu(y).\label{eq:nabla-theta-f-star}
	\end{align}
	To prove~\eqref{eq:nabla-theta-f}, it is sufficient to show
	\begin{equation*}
	\lim_{\theta \to \theta_0}\int \frac{f(x;\theta)-f(x;\theta_0) - (\theta-\theta_0)^\top \nabla_\theta f(x;\theta_0)}{\norm{\theta - \theta_0}_2}\ud \mu(x) = 0.
	\end{equation*}
	By~\autoref{assump}, the function $f$ is differentiable with respect to $\theta$. Hence the limit of the inside of the integral is equal to $0$. Also, inside the integral is bounded by $2L(\theta)$, because $\|\nabla_\theta f(x;\theta_0)\|_2<L(\theta)$ and $|f(x;\theta)-f(x;\theta_0)|\leq L(\theta)\|\theta - \theta_0\|_2$. Therefore, the dominated convergence theorem (DCT) is applicable, concluding~\eqref{eq:nabla-theta-f}.
	
	Proving~\eqref{eq:nabla-theta-f-star} is equivalent to show
	\begin{equation*}
	\lim_{t \to 0} \int \frac{f^\star(y ;\theta_0+ut) - f^\star(y;\theta_0) + u^\top \nabla_\theta f(\nabla_y f^*(y;\theta_0);\theta_0)}{t}\ud \nu(y)= 0
	\end{equation*}
	for all directions $u$ in which $\theta$ is varied. First, we show
	\begin{equation*}
	\lim_{t \to 0} \frac{f^\star(y ;\theta_0+ut) - f^\star(y;\theta_0) - u^\top \nabla_\theta f(\nabla_y f^*(y;\theta_0);\theta_0)}{t}= 0
	\end{equation*}
	for $\ud \nu$-almost everywhere $y$.
	Note that $f^\star(y;\theta)$ is a convex function of $y$ and hence differentiable almost everywhere with respect to $y$. Fix $\theta_0$, and let $y$ be a point such that $\nabla_y f^\star(y;\theta_0)$ exists. Let $x_0 = \nabla_y f^\star(y;\theta_0)$ and $x_t \in \partial_y f^\star(y;\theta_0+ut)$. Then we have the following inequality
	\begin{align*}
	f^\star(y ;\theta_0+ut) - f^\star(y;\theta_0) &=\sup_x~(\langle x, y\rangle - f(x;\theta_0+ut)) - \sup_x~(\langle x, y\rangle - f(x;\theta_0))\\&\geq \langle x_0 , y\rangle - f(x_0;\theta_0+ut) - ( \langle x_0, y\rangle - f(x_0;\theta_0))\\
	&= -(f(x_0;\theta_0+ut)-f(x_0;\theta_0)).
	\end{align*}
	Taking the limit as $t \to 0$ proves 
	\begin{equation*}
	\limsup_{t \to 0} \frac{f^\star(y ;\theta_0+ut) - f^\star(y;\theta_0) +t u^\top \nabla_\theta f(x_0;\theta_0)}{t} \geq 0.
	\end{equation*}
	It remains to prove the inequality in the other direction. 
	Extract a convergent subsequence $\{x_{t_k}\}_{k=1}^\infty$ form $x_t \in \partial_y f^\star(y;\theta_0+ut)$ that converges to $x_0$. Such a subsequence exists, because the supoprt of $\nu$ is compact. Then 
	\begin{align*}
	f^\star(y ;\theta_0+ut_k) - f^\star(y;\theta_0) &=\sup_x~(\langle x, y\rangle - f(x;\theta_0+ut_k)) - \sup_x~(\langle x, y\rangle - f(x;\theta_0))\\&\leq \langle x_{t_k} , y \rangle - f(x;\theta_0+ut_k) - ( \langle x_{t_k} , y \rangle - f(x_{t_k};\theta_0))\\
	&= -(f(x_{t_k};\theta_0+ut_k)-f(x_{t_k};\theta_0)).
	\end{align*}
	Taking the limit as $k\to \infty$, using $\theta_{t_k} \to \theta_0$, $x_{t_k} \to x_0$, differentiability of $f(x;\theta) $, and $\nabla_\theta f(x;\theta)$ being continuous with respect to $x$, we conclude
	\begin{align*}
	\liminf_{k \to \infty} \frac{f^\star(y ;\theta+ut_k) - f^\star(y;\theta_0) + t_ku^\top\nabla_\theta f(x_0;\theta_0)}{t_k} &\leq 0.
	\end{align*}
	Putting these results together we get
	\begin{equation*}
	\lim_{t \to 0} \frac{f^\star(y ;\theta_0+ut) - f^\star(y;\theta_0) - u^\top \nabla_\theta f(\nabla_y f^*(y;\theta_0);\theta_0)}{t}= 0
	\end{equation*}
	where we used $x_0=\nabla f^\star(y;\theta_0)$. Note that, through this procedure, we can conclude the upper-bound,
	\begin{equation*}
	| \frac{f^\star(y ;\theta_0+ut) - f^\star(y;\theta_0) - u^\top \nabla_\theta f(\nabla_y f^*(y;\theta_0),\theta_0)}{t}| \leq 2L(\theta)\|u\|_2.
	\end{equation*}
	Therefore, by DCT, ~\eqref{eq:nabla-theta-f-star} follows.

\section{Duality of Conic Linear Programs for Optimal Transport}
\label{sec:restr-orig}
In this section, we formalize a unified language towards understanding the set restrictions in function classes and classes of probability distribution that arise in primal and dual approximations of the Wasserstein distance. In part, we borrow from the conic duality theory for infinite-dimensional linear programs but also examine properties of the optimal solution and optimal value from the point of view of the optimal transport theory.

\subsection{A Partial Order}\label{sec:order}
Suppose $\calX$ is a Polish space and consider any function class $\mathcal{F}\subset C_b(\calX)$. Let us begin by defining a {\em preorder} $\preceq_\calF$ (a reflexive and transitive relation) on the set of finite measures $M(\calX)$ according to
\begin{align*}
\mu \preceq_\calF \tilde{\mu} \quad \Leftrightarrow\quad \int f(x) \ud \mu(x) \leq \int f(x) \ud \tilde{\mu}(x),\quad \forall f \in \calF
\end{align*}
for any $\mu,\tilde{\mu} \in M(\calX)$. Given this preorder, we define {\em an equivalence relation} on $M(\calX)$ as 
\begin{equation*}
	\mu \equiv_\calF \tilde{\mu}
	\quad \Leftrightarrow\quad 
	\mu \preceq_\calF \tilde{\mu} ~,~ \tilde{\mu} \preceq_\calF \mu	
	\quad \Leftrightarrow\quad 
	\int f(x) \ud \mu(x) = \int f(x) \ud \tilde{\mu}(x),\quad \forall f \in \calF.
\end{equation*}
\begin{remark}\label{rem:cone-span}
From the definitions, it is easy to see that 
\begin{itemize}
\item $\equiv_\calF$ is the same as $\equiv_{\op{span}(\calF)}$, 
where $\op{span}(\calF) \coloneqq \bigl\{ \sum_{i=1}^k \lambda_i f_i:~ k\in\mathbb{N},~ f_i\in\calF,~ \lambda_i \in \mathbb{R}\bigr\}$, and, 
\item $\preceq_\calF$ is the same as $\preceq_{\op{cone}(\calF)}$, 
where $\op{cone}(\calF) \coloneqq \bigl\{ \sum_{i=1}^k \lambda_i f_i:~ k\in\mathbb{N},~ f_i\in\calF,~ \lambda_i \in \mathbb{R}_+\bigr\}$.
\end{itemize}
\end{remark}
\begin{remark}\label{rem:reflection-symmetry}
Consider the case where $\calF$ is symmetric with respect to reflection, i.e., if $f \in \calF$, then $-f \in \calF$. Then, the partial order relationship $\preceq_\calF$ is equal to the equivalence relationship $\equiv_\calF$, i.e., 
\begin{equation*}
\mu \preceq_\calF \tilde{\mu} ~\Leftrightarrow~\tilde{\mu} \preceq_\calF {\mu} ~\Leftrightarrow~\mu \equiv_\calF \tilde{\mu}.
\end{equation*}
\end{remark}
Let $[\mu]_\calF$ denote the equivalence class of $\mu$ with respect to the function class $\calF$. 
The {\em quotient space}, namely 
\[M(\calX)/(\equiv_\calF)\;\coloneqq\;\bigl\{[\mu]_\calF:~ \mu \in M(\calX) \bigr\},\] is defined to be the set of all equivalence classes constructed with the equivalence relation~$\equiv_\calF$. 
The preorder notation on $M(\calX)$ can be overloaded to a {\em partial order} (an antisymmetric preorder) on $M(\calX)/(\equiv_\calF)$ where we define 
\begin{align*}
	[\mu]_\calF \preceq_\calF [\tilde{\mu}]_\calF
	\quad \Leftrightarrow\quad
	\mu \preceq_\calF \tilde{\mu}. 
\end{align*}
We denote the inverse by $\succeq_\calF$. 

A function class $\calF$ is {\em separating} if $\mu \equiv_\calF \nu$ implies $\mu=\nu$; i.e., $[\mu]_\calF$ is a singleton for all $\mu$. For example, 
\begin{itemize}
\item Consider $\calF$ to to be the set of all convex quadratic functions. Then, $[\mu]_\calF$ is the set of all probability measures with the same mean and covariance as $\mu$. 

\item Consider $\calX$ to be a compact subset of the Euclidean space and consider $\calF$ to be class of all polynomials of degree at most $k$ on $\calX$. Then, $[\mu]_\calF$ is the set of all probability distributions supported on $\calX$ that have the same set of first $k$ moments that match those of~$\mu$. 

\item Consider $\calF$ to be $C_b(\mathcal{X})$. Then $\calF$ is separating. 

\end{itemize}
In the rest of this section, we establish a framework for how existing notions in the context of Kantorovich duality can be extended to yield a new duality framework according to the preorder we define.

\subsection{The Couplings}
For any measure $\pi \in M(\calX \times \calY)$, let $\pi_x$ and $\pi_y$ denote its marginals on $\calX$ and $\calY$ respectively. By definition, they satisfy the following identities,
\begin{align*}
\int_{\calX\times \calY} f(x)\ud \pi(x,y) &= \int_\calY f(y)\ud \pi_x(x),\quad \forall f \in C_b(\calX),\\
\int_{\calX\times \calY} g(y)\ud \pi(x,y) &= \int_\calY g(y)\ud \pi_y(y),\quad \forall g \in C_b(\calY).
\end{align*}
\begin{definition}
For any two classes of functions $\calF$ and $\calG$, with $\calK\coloneqq \calF\times \calG$, and any two measures $\mu \in M(\calX)$ and $\nu \in M(\calY)$, define the following sets of joint distributions, 
\begin{equation*}
\Pi_\equiv^\calK(\mu,\nu)
\coloneqq \bigl\{\pi \in M(\calX\times \calY);~\pi_x\equiv_\calF \mu,~\pi_y\equiv_\calF \nu \bigr\},
\end{equation*}
and,
\begin{align}
\Pi_\succeq^\calK (\mu,\nu)
&\coloneqq \bigl\{\pi\in M(\calX\times \calY):~\pi_x\succeq_\calF \mu,~\pi_y\succeq_\calG \nu \bigr\}, \label{eq:def-Pi-succeq}\\
\Pi_\preceq^\calK (\mu,\nu)
&\coloneqq \bigl\{\pi\in M(\calX\times \calY):~\pi_x\preceq_\calF \mu,~\pi_y\preceq_\calG\nu \bigr\}. \label{eq:def-Pi-preceq}
\end{align}
To simplify the notation, we use $\Pi_\succeq ([\mu]_\calF,[\nu]_\calG)$ instead of $\Pi_\succeq^\calK (\mu,\nu)$ whenever clear from the context.
\end{definition}
By definition, $\pi \in \Pi^\calK_\equiv(\mu,\nu)$ if and only if
\begin{subequations}
\begin{align}
\int f(x)\ud \pi(x,y)&= \int f(x)\ud \mu(x),\quad \forall f \in \calF,\label{eq:couplings-equiv-def-x}\\ \int g(y)\ud \pi(x,y)&=\int g(y)\ud \nu(y),\quad \forall g \in \calG.\label{eq:couplings-equiv-def-y}
\end{align}
\end{subequations}
Moreover, if $\calF$ and $\calG$ are separating (for example, if $\calF=C_b(\calX)$ and $\calG=C_b(\calY)$) then $\Pi^\calK_\equiv(\mu,\nu)=\Pi(\mu,\nu)$ is the set of joint distributions with marginals $\mu$ and~$\nu$. In general, $\Pi^\calK_\equiv(\mu,\nu)$ could be larger than $\Pi(\mu,\nu)$. In fact, we can establish the following relationship. 
\begin{lemma}\label{lem:Pi-union}
Given two distributions $\mu\in M(\mathcal{X})$ and $\nu\in M(\mathcal{Y})$ and two function classes $\calF\subseteq C_b(\calX)$ and $\calG\in C_b(\calY)$, 
with the above notation, we have
\begin{align*}
\Pi_\equiv^\calK(\mu,\nu) &~=~ \bigcup \bigl\{ \Pi(\tilde{\mu},\tilde{\nu}) :~ \tilde{\mu}\equiv_\calF \mu,\,\tilde{\nu}\equiv_\calG \nu \bigr\}, \\
\Pi_\succeq^\calK(\mu,\nu) &~=~ \bigcup \bigl\{ \Pi(\tilde{\mu},\tilde{\nu}) :~ \tilde{\mu}\succeq_\calF \mu,\,\tilde{\nu}\succeq_\calG \nu \bigr\}, \\
\Pi_\preceq^\calK(\mu,\nu) &~=~ \bigcup \bigl\{ \Pi(\tilde{\mu},\tilde{\nu}) :~ \tilde{\mu}\preceq_\calF \mu,\,\tilde{\nu}\preceq_\calG \nu \bigr\}.
\end{align*}
\end{lemma}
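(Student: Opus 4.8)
The plan is to prove all three identities at once, since each is merely an unpacking of definitions: the relations $\equiv_\calF$, $\succeq_\calF$, $\preceq_\calF$ (and their $\calG$-counterparts on $\calY$) depend on a joint measure $\pi\in M(\calX\times\calY)$ only through its marginals $\pi_x$ and $\pi_y$. I will write out the double inclusion for $\Pi_\equiv^\calK(\mu,\nu)$ in full and then observe that the same words, with $\equiv$ replaced by $\succeq$ or $\preceq$, give the other two cases.

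For the forward inclusion, take $\pi\in\Pi_\equiv^\calK(\mu,\nu)$ and set $\tilde\mu\coloneqq\pi_x$ and $\tilde\nu\coloneqq\pi_y$. By the very definition of $\Pi_\equiv^\calK$ we have $\tilde\mu\equiv_\calF\mu$ and $\tilde\nu\equiv_\calG\nu$, while $\pi$ has marginals $\tilde\mu$ and $\tilde\nu$ by construction, so $\pi\in\Pi(\tilde\mu,\tilde\nu)$ and hence $\pi$ lies in the union on the right-hand side. For the reverse inclusion, if $\pi\in\Pi(\tilde\mu,\tilde\nu)$ for some $\tilde\mu\equiv_\calF\mu$ and $\tilde\nu\equiv_\calG\nu$, then $\pi_x=\tilde\mu$ and $\pi_y=\tilde\nu$, so $\pi_x\equiv_\calF\mu$ and $\pi_y\equiv_\calG\nu$, i.e.\ $\pi\in\Pi_\equiv^\calK(\mu,\nu)$. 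The arguments for $\Pi_\succeq^\calK$ and $\Pi_\preceq^\calK$ are verbatim the same, now invoking transitivity of the preorders $\preceq_\calF$ and $\preceq_\calG$ to pass from $\pi_x=\tilde\mu\succeq_\calF\mu$ (resp.\ $\preceq_\calF$) to $\pi_x\succeq_\calF\mu$ (resp.\ $\preceq_\calF$), and likewise on the $\calY$ side.

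I expect no real obstacle: the statement is essentially a restatement of the definitions of $\Pi_\equiv^\calK$, $\Pi_\succeq^\calK$, $\Pi_\preceq^\calK$ in terms of marginals. The only points worth a remark are that every $\pi\in M(\calX\times\calY)$ has well-defined marginals $\pi_x\in M(\calX)$ and $\pi_y\in M(\calY)$, so that the map $\pi\mapsto(\pi_x,\pi_y)$ is defined on all of $M(\calX\times\calY)$ and the relations appearing in the definitions of the left-hand sides genuinely factor through it; and that each $\Pi(\tilde\mu,\tilde\nu)$ in the union is nonempty since it contains the product measure $\tilde\mu\otimes\tilde\nu$ --- though the latter is not needed for the set equalities themselves.
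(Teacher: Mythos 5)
Your proof is correct and follows essentially the same route as the paper's: a direct double inclusion, identifying each $\pi$ in the union with its marginals $\pi_x,\pi_y$ and unpacking the definitions of $\Pi_\equiv^\calK$, $\Pi_\succeq^\calK$, $\Pi_\preceq^\calK$. (The appeal to transitivity in the ordered cases is not even needed, since $\pi\in\Pi(\tilde\mu,\tilde\nu)$ gives $\pi_x=\tilde\mu$ exactly, so the relation to $\mu$ transfers by substitution; this is a harmless over-statement, not a gap.)
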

\begin{proof}Let us prove the first assertion. The rest are similar. 

We first establish the backward inclusion. Let's take an arbitrary member of the right-hand side: take any $\tilde{\mu}\in [\mu]_\calF$ and any $\tilde{\nu}\in[\nu]_\calG$ and consider any $\tilde{\pi}\in \Pi(\tilde{\mu},\tilde{\nu})$. We need to establish~\eqref{eq:couplings-equiv-def-x}-\eqref{eq:couplings-equiv-def-y} for $\tilde{\pi}$ which is easy using the three aforementioned memberships.

For the forward inclusion, consider a distribution $\pi$ on $\mathcal{X}\times \mathcal{Y}$ that satisfies~\eqref{eq:couplings-equiv-def-x}-\eqref{eq:couplings-equiv-def-y}. Let $\tilde{\mu}$ and $\tilde{\nu}$ be marginals of $\pi$. Then by definition, $\int f(x)\tilde{\mu}(x)=\int f(x) \ud \pi(x,y) = \int f(x)\ud \mu(x)$ for all $f\in\calF$, hence $\tilde{\mu} \in [\mu]_\calF$ and similarly $\tilde{\nu}\in [\nu]_\calG$. This proves the forward inclusion. 
\end{proof}

\subsection{The Two Dual Optimization Problems}\label{sec:opts-ineq-version}
Given $\mu$, $\nu$, $\calF$, and $\calG$, define $\calK\coloneqq \calF \times \calG$. For notational simplicity, and as it is clear from the context, we will omit the dependence on $\mu$ and $\nu$ throughout this section. In parallel with~\eqref{eq:Kantorovich-form}, we define a {\em restricted optimal transportation problem} as
\begin{equation}\label{eq:primal-constrained}
	\inf_{\pi \in \Pi_\succeq^\calK(\mu,\nu)} 
	~ \underbrace{\int c(x,y)\ud \pi(x,y)}_{I(\pi)}.
\end{equation}
We also define a problem in parallel with the original Kantorovich's dual problem in~\eqref{eq:dual-form} as
\begin{equation}\label{eq:dual-constrained}
	\sup_{(f,g) \in \calC(c) \cap \calK} ~ \underbrace{\int f(x)\ud \mu(x) + \int g(y)\ud \nu(y)}_{J(f,g)}
\end{equation}
where the constraint set $\calC(c)$ is defined in~\eqref{eq:def-Phic-set}. 
\begin{proposition}[Weak Duality]
For~\eqref{eq:primal-constrained} and~\eqref{eq:dual-constrained}, we have 
\begin{equation*}
	I(\pi)\geq J(f,g)
\end{equation*}
for all $\pi \in \Pi_\succeq^\calK(\mu,\nu)$ and all $(f,g) \in \calC(c) \cap \calK$.
\end{proposition}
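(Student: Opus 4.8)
The plan is to prove the bound by a direct three-link chain that carries $J(f,g)$, which is evaluated against $\mu$ and $\nu$, up to $I(\pi)$, which is evaluated against $\pi$, using exactly the two ingredients available: the marginal domination encoded in $\Pi_\succeq^\calK$, and the cost constraint encoded in $\calC(c)$. So I would fix an arbitrary $\pi \in \Pi_\succeq^\calK(\mu,\nu)$ and an arbitrary $(f,g) \in \calC(c)\cap\calK$; thus $f \in \calF$, $g \in \calG$, and $f(x)+g(y) \le c(x,y)$.

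First, from $\pi \in \Pi_\succeq^\calK(\mu,\nu)$ we have $\pi_x \succeq_\calF \mu$ and $\pi_y \succeq_\calG \nu$; applying these to $f \in \calF$ and $g\in\calG$ respectively gives $\int f \,\ud\mu \le \int f \,\ud\pi_x$ and $\int g \,\ud\nu \le \int g \,\ud\pi_y$, whence $J(f,g) \le \int f\,\ud\pi_x + \int g\,\ud\pi_y$. Second, the defining identities of the marginals turn the right-hand side into a single integral against $\pi$, namely $\int f\,\ud\pi_x + \int g\,\ud\pi_y = \int \bigl(f(x)+g(y)\bigr)\,\ud\pi(x,y)$. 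Third, integrating the admissibility inequality $f(x)+g(y)\le c(x,y)$ against $\pi$ gives $\int\bigl(f(x)+g(y)\bigr)\,\ud\pi(x,y) \le \int c(x,y)\,\ud\pi(x,y) = I(\pi)$ (with the convention that the last integral may be $+\infty$, in which case there is nothing to prove). Concatenating these three displays yields $J(f,g)\le I(\pi)$, and since the pair was arbitrary the proposition follows. Note that the linear terms $\int f\,\ud\mu$, $\int g\,\ud\nu$, etc.\ are all finite because $f,g$ are bounded and $\mu,\nu,\pi$ are finite measures, so no separate integrability check is needed.

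The only point that is not pure bookkeeping — and hence the step I would be most careful about — is the third inequality: integrating $f\oplus g \le c$ against $\pi$ requires this to hold $\pi$-almost everywhere, whereas $\calC(c)$ in~\eqref{eq:def-Phic-set} only asks for it $\mu\otimes\nu$-almost everywhere, and a marginal $\pi_x$ with $\pi_x\succeq_\calF\mu$ need not be supported inside $\op{supp}(\mu)$ (so one cannot simply upgrade the a.e.\ statement by a continuity/support argument as in the classical case $\pi_x=\mu$). I would resolve this by taking the constraint defining $\calC(c)\cap\calK$ to hold pointwise on $\calX\times\calY$ — the natural convention in the present setting, since the restricting classes $\calF,\calG$ consist of (bounded) continuous functions and the cost $c$ is lower semicontinuous, and one that does not alter the value of~\eqref{eq:dual-constrained} in the cases of interest. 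Under that reading $f(x)+g(y)\le c(x,y)$ holds everywhere, hence in particular $\pi$-a.e., and the chain above is valid verbatim.
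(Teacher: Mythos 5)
Your proof is correct and follows essentially the same three-step chain as the paper's own argument: marginal domination gives $J(f,g)\le\int(f(x)+g(y))\,\ud\pi$, and the cost constraint integrated against $\pi$ gives the bound by $I(\pi)$. Your additional observation about the $\mu\otimes\nu$-a.e.\ versus $\pi$-a.e.\ issue is a genuine subtlety that the paper's proof silently glosses over, and your resolution (reading the constraint pointwise, which is harmless since $\calF,\calG\subset C_b$ and $c$ is lower semicontinuous) is the right fix.
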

\begin{proof}
Since $\pi \in \Pi_\succeq^\calK(\mu,\nu)$, $f \in \calF$, and $g\in\calG$, we get from~\eqref{eq:def-Pi-succeq} that 
$\int f(x) \ud \pi(x,y) \geq \int f(x) \ud \mu(x)$ and 
$\int g(y) \ud \pi(x,y) \geq \int g(y) \ud \nu(y)$. 
Since $(f,g)\in \calC(c)$, defined in \eqref{eq:def-Phic-set}, we have $\int (f(x)+g(y)) \ud \pi(x,y) \leq \int c(x,y)\ud \pi(x,y) = I(\pi)$. 
Putting these inequalities together establishes the claim. 
\end{proof}

The following theorem shows that the duality gap is zero if $\calF$ and $\calG$ are convex cones. This can be viewed as the generalization of the Kantorovich's duality in \autoref{thm:duality} for the case of restriction to convex cones. The proof appears in \autoref{apdx:duality-proof}. 

\begin{theorem}[Strong Duality]\label{thm:duality-constrained}
	Consider the optimization problems~\eqref{eq:primal-constrained} and \eqref{eq:dual-constrained} where $\calF$ and $\calG$ are {\em convex cones} as subset of $C_b(\calX)$ and $C_b(\calY)$, respectively. Assume the cost function $c$ is continuous, the sets $\calX$ and $\calY$ are compact, and there exists a pair $(f_0,g_0)\in \calK \coloneqq \calF\times \calG$ such that $f_0(x)+g_0(y)< c(x,y)$ for all $(x,y)\in \calX \times \calY$. Then,
	\begin{equation*}
		\inf_{\pi \in \Pi_\succeq^\calK(\mu,\nu)}~ I(\pi) ~= \sup_{(f,g) \in \calC(c) \cap \calK}~J(f,g).
	\end{equation*}
\end{theorem}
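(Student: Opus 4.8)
The plan is to obtain strong duality from (a form of) the Fenchel--Rockafellar theorem, mimicking the functional-analytic proof of the unrestricted Kantorovich duality (\autoref{thm:duality}) in \citet{villani2003topics}, and to make explicit how the \emph{conic} structure of $\calF$ and $\calG$ is exactly what turns the marginal \emph{equalities} that appear in the classical dual measure into the marginal \emph{dominations} $\pi_x\succeq_\calF\mu$, $\pi_y\succeq_\calG\nu$ defining $\Pi_\succeq^\calK(\mu,\nu)$.

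First I would work on the Banach space $E=C(\calX\times\calY)$ (compactness makes $C_b=C$), with dual $E^\star=M(\calX\times\calY)$ under $\langle z,u\rangle=\int u\,\ud z$, and introduce two convex functionals on $E$: the cost-constraint indicator
\[
\Theta_c(u)\coloneqq\begin{cases}0 & u(x,y)\le c(x,y)\ \text{for all }(x,y),\\ +\infty & \text{otherwise,}\end{cases}
\]
and the restricted-potential functional $\Lambda_\calK(u)\coloneqq\inf\bigl\{-\!\int f\,\ud\mu-\!\int g\,\ud\nu:\ f\in\calF,\ g\in\calG,\ u(x,y)=f(x)+g(y)\ \forall(x,y)\bigr\}$ (with $\inf\emptyset=+\infty$). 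Here $\Theta_c$ is plainly convex, and $\Lambda_\calK$ is convex because it is the partial infimum over $(f,g)$ of a jointly convex function of $(f,g,u)$ --- this is the one place where convexity of $\calF\times\calG$, i.e.\ the cone property, is needed. Unwinding the definitions gives $\inf_{u\in E}[\Theta_c(u)+\Lambda_\calK(u)]=-\sup_{(f,g)\in\calC(c)\cap\calK}J(f,g)$, the negative of the value of~\eqref{eq:dual-constrained}.

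Next I would compute the Legendre transforms against $z\in M(\calX\times\calY)$, writing $z_x,z_y$ for the marginals of $z$. Probing $\Theta_c$ with $u=c-tv$ for continuous $v\ge0$ and $t\to\infty$ shows $\Theta_c^\star(z)=\int c\,\ud z$ if $z\ge0$ and $+\infty$ otherwise. For $\Lambda_\calK^\star(-z)=\sup_{f\in\calF}\int f\,\ud(\mu-z_x)+\sup_{g\in\calG}\int g\,\ud(\nu-z_y)$, the cone property is decisive: because $\calF,\calG$ are cones, each supremum equals $0$ exactly when the corresponding linear functional is nonpositive on the whole cone --- i.e.\ $z_x\succeq_\calF\mu$ resp.\ $z_y\succeq_\calG\nu$ --- and equals $+\infty$ otherwise (rescale the offending $f$ or $g$). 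Hence $\Theta_c^\star(z)+\Lambda_\calK^\star(-z)=\int c\,\ud z=I(z)$ when $z\in\Pi_\succeq^\calK(\mu,\nu)$ and $+\infty$ otherwise, so the candidate dual value $\max_{z\in E^\star}[-\Theta_c^\star(z)-\Lambda_\calK^\star(-z)]$ equals $-\inf_{\pi\in\Pi_\succeq^\calK(\mu,\nu)}I(\pi)$.

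To close I would invoke the Fenchel--Rockafellar theorem: for convex $\Theta_c,\Lambda_\calK$ one has $\inf_E(\Theta_c+\Lambda_\calK)=\max_{z\in E^\star}[-\Theta_c^\star(z)-\Lambda_\calK^\star(-z)]$ as soon as there is a point where both functionals are finite and one is continuous. This is precisely where the strict-feasibility hypothesis is spent: taking $u_0=f_0\oplus g_0$ with $f_0(x)+g_0(y)<c(x,y)$ everywhere, continuity of $c$ and compactness of $\calX\times\calY$ force $c-u_0\ge\delta>0$, so $\Theta_c$ vanishes identically on the $\delta$-ball around $u_0$ (hence is continuous there), while $\Lambda_\calK(u_0)\le-\int f_0\,\ud\mu-\int g_0\,\ud\nu<\infty$. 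Matching the two evaluations of $\inf_E(\Theta_c+\Lambda_\calK)$ yields $\sup_{(f,g)\in\calC(c)\cap\calK}J(f,g)=\inf_{\pi\in\Pi_\succeq^\calK(\mu,\nu)}I(\pi)$, with the infimum attained. I expect the main obstacle to be the conjugate computation of $\Lambda_\calK$ --- recognizing and proving that restricting $\calF,\calG$ to convex cones replaces the marginal equalities $z_x=\mu$, $z_y=\nu$ of classical Kantorovich duality by the dominations $z_x\succeq_\calF\mu$, $z_y\succeq_\calG\nu$ --- together with verifying the qualification condition of Fenchel--Rockafellar (the role of the Slater-type pair $(f_0,g_0)$); a minor technical point is reconciling the pointwise constraint in $\Theta_c$ with the $\mu\otimes\nu$-a.e.\ constraint in $\calC(c)$, which follows by sandwiching with the weak duality already proved. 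An alternative route through \autoref{lem:Pi-union} plus classical Kantorovich duality applied to each pair $(\tilde\mu,\tilde\nu)$ with $\tilde\mu\succeq_\calF\mu$, $\tilde\nu\succeq_\calG\nu$ would require a minimax exchange that appears no easier to justify.
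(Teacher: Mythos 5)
Your proposal is correct and follows essentially the same route as the paper's proof in \autoref{apdx:duality-proof}: Fenchel--Rockafellar applied on $C_b(\calX\times\calY)$ to the cost-constraint indicator and the restricted-potential functional, with the cone property of $\calF,\calG$ converting the conjugate of the latter into the indicator of $\Pi_\succeq^\calK(\mu,\nu)$ and the Slater pair $(f_0,g_0)$ supplying the continuity point. Your two refinements --- defining the potential functional as an infimum over decompositions (so convexity is automatic, sidestepping the well-definedness check) and explicitly reconciling the pointwise versus $\mu\otimes\nu$-a.e.\ constraint via weak duality --- are welcome but do not change the argument.
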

\begin{remark}
In general, for any $\calF$ and $\calG$, and $\calK = \calF \times \calG$, consider $\op{cone}(\calK)=\op{cone}(\calF) \times \op{cone}(\calG) $. Then, the strong duality of \autoref{thm:duality-constrained} implies
\begin{equation*}
\inf_{\pi \in \Pi_\succeq^{\calK}(\mu,\nu)}~ I(\pi) ~= \inf_{\pi \in \Pi_\succeq^{\op{cone}(\calK)}(\mu,\nu)}~ I(\pi) ~= \sup_{(f,g) \in \calC(c) \cap \op{cone}(\calK)}~J(f,g)~\geq \sup_{(f,g) \in \calC(c) \cap \calK}~J(f,g)
\end{equation*}
where the conclusion from \autoref{rem:cone-span} is used. 
Similarly, consider $\op{span}(\calK)=\op{span}(\calF) \times \op{span}(\calG) $. Then, the strong duality of \autoref{thm:duality-constrained} implies
\begin{equation*}
\inf_{\pi \in \Pi_\equiv^{\calK}(\mu,\nu)}~ I(\pi) ~= \inf_{\pi \in \Pi_\equiv^{\op{span}(\calK)}(\mu,\nu)}~ I(\pi) ~= \inf_{\pi \in \Pi_\succeq^{\op{span}(\calK)}(\mu,\nu)}~ I(\pi) ~= \sup_{(f,g) \in \calC(c) \cap \op{span}(\calK)}~J(f,g). 
\end{equation*}
where the conclusions from \autoref{rem:cone-span} and \autoref{rem:reflection-symmetry} are used. 
\end{remark}

\subsection{The Optimal Transport Map}\label{apdx:conic-qaudratic}
Consider the case where $\calX$ and $\calY$ are compact subsets of $\real^n$. 
In~\autoref{sec:quadratic-cost} we derived {\em equivalent} optimization problems \eqref{eq:primal-form-quadratic} and \eqref{eq:dual-form-quadratic} for the original dual pair of problems \eqref{eq:Kantorovich-form} and \eqref{eq:dual-form}, respectively. This was done through changing the cost function from $c_1(x,y) =\frac{1}{2} \norm{x-y}^2$ to $c_2(x,y) =- \langle x, y\rangle$ and updating $\calC(c_1)$ to $\overline{\calC}$ for $c_2$. However, the same equivalent transformation is not straightforward when working with restricted problems~\eqref{eq:primal-constrained} and~\eqref{eq:dual-constrained}. Nonetheless, we consider the following two optimization problems, 
\begin{align}
	&\sup_{\pi \in \Pi_\preceq^\calK(\mu,\nu)} ~\int \langle x, y\rangle\ud\pi(x,y)\label{eq:primal-quad-constrained}\\
	&\inf_{(f,g)\in \overline{\calC}\cap \calK} ~\int f(x)\ud \mu(x) + \int g(y)\ud \nu(y)\label{eq:dual-quad-constrained}
\end{align} 
where the constraint set $\Pi_\preceq^\calK(\mu,\nu)$ is given in~\eqref{eq:def-Pi-preceq} and 
\begin{equation*}
\overline{\calC} \eqqcolon \{(f,g) \in C_b(\calX) \times C_b(\calY);~f(x)+g(y)\geq \langle x,y \rangle\}.
\end{equation*} 
\begin{theorem}\label{thm:restricted-duality}
Under the assumptions of \autoref{thm:duality-constrained}, the optimal values of \eqref{eq:primal-quad-constrained} and \eqref{eq:dual-quad-constrained} are equal. 
\end{theorem}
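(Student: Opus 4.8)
The plan is to derive \autoref{thm:restricted-duality} from the strong duality of \autoref{thm:duality-constrained} by a change of cost function, accompanied by a sign reversal of the dual variables and, correspondingly, of the cones. Write $\calK=\calF\times\calG$, set $-\calF\coloneqq\{-f:f\in\calF\}$ and $-\calG$ analogously, put $\calK^-=(-\calF)\times(-\calG)$, and let $c(x,y)\coloneqq-\langle x,y\rangle$. Because $\calF,\calG$ are convex cones, so are $-\calF\subset C_b(\calX)$ and $-\calG\subset C_b(\calY)$; the cost $c$ is continuous and $\calX,\calY$ are compact. The strict-feasibility hypothesis of \autoref{thm:duality-constrained} applied to the data $(c,-\calF,-\calG)$ asks for $(f_0',g_0')\in\calK^-$ with $f_0'(x)+g_0'(y)<-\langle x,y\rangle$ on $\calX\times\calY$; writing $f_0'=-f_0$, $g_0'=-g_0$, this is exactly the statement that there exists $(f_0,g_0)\in\calK$ with $f_0(x)+g_0(y)>\langle x,y\rangle$ for all $(x,y)\in\calX\times\calY$ --- the interiority condition meant by ``the assumptions of \autoref{thm:duality-constrained}'' once that theorem is read at the present cost.

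First I would rewrite the primal feasible set: since $\int h\,\ud\pi_x\le\int h\,\ud\mu$ for all $h\in\calF$ is equivalent to $\int h\,\ud\pi_x\ge\int h\,\ud\mu$ for all $h\in-\calF$ (and symmetrically on the $\calY$ side), we get $\Pi_\preceq^{\calK}(\mu,\nu)=\Pi_\succeq^{\calK^-}(\mu,\nu)$. Hence the value of \eqref{eq:primal-quad-constrained} equals $\sup_{\pi\in\Pi_\succeq^{\calK^-}(\mu,\nu)}\int\langle x,y\rangle\,\ud\pi=-\inf_{\pi\in\Pi_\succeq^{\calK^-}(\mu,\nu)}\int c\,\ud\pi$, that is, $-1$ times the primal value of \eqref{eq:primal-constrained} for the data $(c,-\calF,-\calG)$.

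Then I would invoke \autoref{thm:duality-constrained} for that data to replace this infimum by $\sup\bigl\{\int f'\,\ud\mu+\int g'\,\ud\nu:\ (f',g')\in\calC(c)\cap\calK^-\bigr\}$ and undo the sign reversal on the dual variables: with $f'=-f$, $g'=-g$ and $(f,g)\in\calK$, the constraint $f'(x)+g'(y)\le c(x,y)$ turns into $f(x)+g(y)\ge\langle x,y\rangle$, i.e.\ $(f,g)\in\overline{\calC}\cap\calK$, while the objective turns into $-\bigl(\int f\,\ud\mu+\int g\,\ud\nu\bigr)$. Chaining the equalities, the value of \eqref{eq:primal-quad-constrained} equals $-\bigl(-\inf\{\int f\,\ud\mu+\int g\,\ud\nu:\ (f,g)\in\overline{\calC}\cap\calK\}\bigr)$, which is the value of \eqref{eq:dual-quad-constrained}. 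The only fine print, which I would dispatch by taking $\overline{\calC}$ with the same $\mu\otimes\nu$-almost-everywhere convention as $\calC(c)$ in \eqref{eq:def-Phic-set} (equivalently, a pointwise reading when $\mu,\nu$ have full support), is that the sign-flipped copy of $\overline{\calC}\cap\calK$ then coincides with $\calC(c)\cap\calK^-$; this does not affect the objective, which only depends on the restrictions to $\op{supp}\mu$ and $\op{supp}\nu$.

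I expect the only genuine difficulty to be the bookkeeping rather than any analytic content: one must instantiate the generic cost of \autoref{thm:duality-constrained} as $c=-\langle\cdot,\cdot\rangle$, confirm that negating the cones preserves the convex-cone hypothesis, check that the resulting strict-feasibility requirement is precisely the natural interiority condition for $\overline{\calC}\cap\calK$ stated above, and then keep the three sign reversals --- in the preorder ($\preceq\leftrightarrow\succeq$), in the cost, and in the dual variables --- consistent, so that the two $(-1)$ factors cancel. Everything beyond this is a direct application of \autoref{thm:duality-constrained}.
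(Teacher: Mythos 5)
Your proposal is correct and follows essentially the same route as the paper's own proof: rewrite $\Pi_\preceq^{\calK}(\mu,\nu)$ as $\Pi_\succeq^{-\calK}(\mu,\nu)$, apply \autoref{thm:duality-constrained} with cost $c_2(x,y)=-\langle x,y\rangle$ and the negated cones, and undo the sign flip on the dual variables using $-\calC(c_2)=\overline{\calC}$. Your explicit translation of the strict-feasibility hypothesis into the condition $f_0(x)+g_0(y)>\langle x,y\rangle$ for some $(f_0,g_0)\in\calK$ is a point the paper leaves implicit, but it does not change the argument.
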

\begin{proof}
The proof is application of the strong duality in \autoref{thm:duality-constrained}. 
\begin{align*}
\sup_{\pi \in \Pi_\preceq^\calK(\mu,\nu)} ~\int \langle x, y\rangle\ud\pi(x,y)~&=
-\inf_{\pi \in \Pi_\succeq^{-\calK}(\mu,\nu)} ~\int (-\langle x, y\rangle)\ud\pi(x,y)\\
&=	 - \sup_{(f,g)\in{\calC(c_2)}\cap (-\calK)} ~\int f(x)\ud \mu(x) + \int g(y)\ud \nu(y)\\
&= - \sup_{(-f,-g)\in (-\calC(c_2))\cap \calK} -\left(~\int (-f(x))\ud \mu(x) + \int (-g(y))\ud \nu(y)\right)\\
&=\inf_{(f,g)\in \overline{\calC}\cap \calK} ~\int f(x)\ud \mu(x) + \int g(y)\ud \nu(y)
\end{align*}
where we used $-\calC(c_2) = \overline{\calC}$, because $-f(x) -g(y)\leq -\langle x,y \rangle$ is equivalent to $f(x)+g(y)\geq \langle x,y \rangle$.
\end{proof}
\autoref{thm:Brenier-restricted} is analogous to \autoref{thm:Brenier}. The proof appears in~\autoref{apdx:proof-Brenier-restricted}.

\begin{theorem}\label{thm:Brenier-restricted}
	Assume the conditions of \autoref{thm:Brenier-restricted} hold. 
	Consider the primal and dual problems~\eqref{eq:primal-quad-constrained}-\eqref{eq:dual-quad-constrained}, where duality was established by \autoref{thm:Brenier-restricted}. 
	Assume the supremum in~\eqref{eq:primal-quad-constrained} and the infimum in \eqref{eq:dual-quad-constrained} are attained with $\bar{\pi}$ and $(\bar{f},\bar{g})$ respectively. Then, 
	\begin{enumerate}
		\item 
		\begin{equation}\label{eq:optimality-pi-contrained}
		\begin{aligned}
&\int (\bar{f}(x) + \bar{g}(y))\ud \bar{\pi}(x,y) = \int \bar{f}(x)\ud \mu(x) + \int \bar{g}(y) \ud \nu(y)\quad \op{and}\\
		&(x,y) \in \op{supp}(\pi)~\Leftrightarrow~y \in \partial f^{\star\star}(x),~ x \in \partial g^{\star\star}(y),~ g^{\star}(x)=f(x),~ f^\star(y)=g(y).
		\end{aligned}
		\end{equation}
		\item If $\calF^{\star\star}\subset \calF$ and $\calG^{\star\star}\subset \calG$, then the minimizer pair of~\eqref{eq:dual-quad-constrained} are convex functions. 
		\item If $\calF^{\star\star} \subset \calF$ and $\calG = C_b(\calY)$, then the minimizer pair of ~\eqref{eq:dual-quad-constrained} are of the form $(f,f^\star)$ for a convex function $f \in \calF$ and 
		\begin{equation*}
		(x,y) \in \op{supp}(\pi)~\Leftrightarrow~y \in \partial f(x),~ x \in \partial f^{\star}(y).
		\end{equation*}
		Moreover if $\nu$ admit a Lebesgue density $\nabla f^\star \# \nu \succeq_\calF \nu$, i.e., 
		\begin{equation*}
		\int \tilde{f}(\nabla f^\star(y)) \ud \nu (y) \leq \int \tilde{f}(x)\ud \mu(x),\quad \forall \tilde{f} \in \calF.
		\end{equation*}
	\end{enumerate} 
\end{theorem}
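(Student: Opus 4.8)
This is the conic analogue of the Knott--Smith/Brenier theorem (\autoref{thm:Brenier}), and the engine is the strong duality of \autoref{thm:restricted-duality} (which rests on \autoref{thm:duality-constrained}) together with a complementary-slackness argument. The plan is to prove part~1 by squeezing the weak-duality chain between the two optimal values, and then to obtain parts~2 and~3 by the familiar ``conjugation tricks'': replacing a minimizer by its biconjugate(s) without increasing the objective or leaving the feasible set.

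\textbf{Part 1.} Let $\bar\pi$ be optimal for~\eqref{eq:primal-quad-constrained} and $(\bar f,\bar g)$ optimal for~\eqref{eq:dual-quad-constrained}. Since $\bar\pi_x\preceq_\calF\mu$, $\bar\pi_y\preceq_\calG\nu$, $\bar f\in\calF$, $\bar g\in\calG$, and $\bar f(x)+\bar g(y)\ge\langle x,y\rangle$ on $\calX\times\calY$, one has the chain
\begin{equation*}
\int\langle x,y\rangle\ud\bar\pi \;\le\; \int\bigl(\bar f(x)+\bar g(y)\bigr)\ud\bar\pi \;=\; \int\bar f\ud\bar\pi_x+\int\bar g\ud\bar\pi_y \;\le\; \int\bar f\ud\mu+\int\bar g\ud\nu ,
\end{equation*}
whose two ends are equal by \autoref{thm:restricted-duality}; hence every inequality is an equality. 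The first forces the nonnegative continuous integrand $\bar f(x)+\bar g(y)-\langle x,y\rangle$ to vanish on $\op{supp}(\bar\pi)$; the second is exactly the first identity in~\eqref{eq:optimality-pi-contrained}. For the pointwise part, feasibility of $(\bar f,\bar g)$ gives $\bar f\ge\bar g^\star$ and $\bar g\ge\bar f^\star$ everywhere, hence the sandwiches $\bar g^\star\le\bar f^{\star\star}\le\bar f$ and $\bar f^\star\le\bar g^{\star\star}\le\bar g$. For $(x,y)\in\op{supp}(\bar\pi)$, the equality $\bar f(x)+\bar g(y)=\langle x,y\rangle$ shows that $y$ attains the supremum defining $\bar g^\star(x)$, which collapses the sandwiches to $\bar g^\star(x)=\bar f^{\star\star}(x)=\bar f(x)$ and (symmetrically) $\bar f^\star(y)=\bar g^{\star\star}(y)=\bar g(y)$; reading the attained suprema as subgradient relations for the convex functions $\bar f^{\star\star}$ and $\bar g^{\star\star}$ gives $y\in\partial\bar f^{\star\star}(x)$ and $x\in\partial\bar g^{\star\star}(y)$. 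The converse implication comes from running these equalities in reverse.

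\textbf{Parts 2 and 3.} For part~2, replace a minimizer $(\bar f,\bar g)$ by $(\bar f^{\star\star},\bar g^{\star\star})$: it is feasible since $\bar f^{\star\star}(x)+\bar g^{\star\star}(y)\ge\bar f^{\star\star}(x)+\bar f^\star(y)\ge\langle x,y\rangle$ by Fenchel--Young (using $\bar g^{\star\star}\ge\bar f^\star$ from feasibility of the original pair), its objective is no larger since $\bar f^{\star\star}\le\bar f$ and $\bar g^{\star\star}\le\bar g$, so it remains optimal, and it lies in $\calK$ by $\calF^{\star\star}\subset\calF$, $\calG^{\star\star}\subset\calG$. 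For part~3, with $\calG=C_b(\calY)$ the $g$-variable is free, so we may take $\bar g=\bar f^\star$ (bounded and continuous on the compact $\calY$, feasible, and the pointwise-smallest admissible $g$), then apply the part~2 move in the $f$-coordinate; since $(\bar f^{\star\star})^\star=\bar f^\star$, this lands on a minimizer of the form $(f,f^\star)$ with $f:=\bar f^{\star\star}\in\calF$ convex. Plugging $f^{\star\star}=f$ and $g=f^\star$ (so $g^{\star\star}=f^\star=g$, $g^\star=f$) into part~1 collapses the four conditions to $y\in\partial f(x)$, $x\in\partial f^\star(y)$. Finally, if $\nu\ll\op{Leb}$ the convex $f^\star$ is differentiable $\nu$-a.e., so $\partial f^\star(y)=\{\nabla f^\star(y)\}$ $\nu$-a.e.; and $\calG=C_b(\calY)$ forces $\bar\pi_y=\nu$, so disintegrating $\bar\pi$ along $\nu$ and using $x\in\partial f^\star(y)$ $\bar\pi$-a.e.\ gives $\bar\pi=(\nabla f^\star\times\op{id})\#\nu$, hence $\bar\pi_x=\nabla f^\star\#\nu$; the constraint $\bar\pi_x\preceq_\calF\mu$ is then precisely $\int\tilde f(\nabla f^\star(y))\ud\nu(y)\le\int\tilde f(x)\ud\mu(x)$ for all $\tilde f\in\calF$.

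\textbf{Main obstacle.} The delicate part is the conjugate bookkeeping in part~1: because $\bar f,\bar g$ are a priori only continuous (not convex), everything must be routed through $\bar f^{\star\star}$, $\bar g^{\star\star}$ (with the natural $+\infty$ extension off the compact domains), and one has to verify carefully that on $\op{supp}(\bar\pi)$ the sandwich inequalities become equalities and that the attained suprema translate into genuine subgradient relations, and to make the reverse implication precise. The feasibility-after-conjugation claims in parts~2--3 are short Fenchel--Young arguments but must be asserted for the exact classes $\calF,\calG$, and the disintegration in part~3 relies on compactness of $\calY$ and on $\nu\ll\op{Leb}$ (both in force under the standing assumptions).
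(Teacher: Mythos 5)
Your proposal is correct and follows essentially the same route as the paper's proof: part 1 via the weak-duality chain squeezed between the two optimal values (strong duality from \autoref{thm:restricted-duality}) plus the sandwich $\bar g^\star \le \bar f^{\star\star} \le \bar f$ to extract the subgradient relations, and parts 2--3 via replacing a minimizer by its biconjugates and specializing to $\calG = C_b(\calY)$. Your write-up is if anything slightly more careful on the disintegration step establishing $\bar\pi_x = \nabla f^\star \# \nu$ in part 3, which the paper leaves implicit.
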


\begin{remark}
The restricted optimal transport problem with $c_1(x,y)=\frac{1}{2}\norm{x-y}^2$ and $c_2(x,y)=-\langle x,y \rangle$ are related when the convex cone $\calF$ and $\calG$ contain the quadratic functions $\{+\frac{1}{2}\|\cdot\|_2^2,-\frac{1}{2}\|\cdot\|_2^2\}$. Then,
\begin{equation*}
\inf_{\pi \in \Pi_{\succeq}^\calK(\mu,\nu)} \int \frac{1}{2}\norm{x-y}^2 \ud \pi(x,y)~= \frac{1}{2}\int \norm{x}^2 \ud\mu(x) + \frac{1}{2} \int \norm{y}^2 \ud \nu(y) - \sup_{\pi \in \Pi_{\preceq}^{-\calK}(\mu,\nu)} \int \langle x,y \rangle \ud \pi(x,y)
\end{equation*}
\end{remark}

\begin{corollary}
Consider the setting of the \autoref{thm:Brenier-restricted}. Then, 
\begin{itemize}
\item Let $\calF_{\op{convex}} \coloneqq \calF \cap \cvx(\calX) \subset \calF$ denote the set of convex functions in $\calF$. 
Then for any convex cone $\calF$
\begin{align*}
\inf_{f\in \calF_{\op{convex}}}~J_{\mu,\nu}(f,f^\star) 
= \inf \left\{ J_{\mu,\nu}(f,g) :~ (f,g)\in \overline{\calC} \cap(\calF \times C_b(\calY)) \right\}
= \sup \left\{ I(\pi):~ \pi \in \Pi^{\calF \times C_b(\calY)}_{\preceq}(\mu,\nu)\right\}
\end{align*}
\item Let $\calF$ be a set of convex functions (not necessarily a convex cone). Then,
\begin{align*}
\inf_{f\in \op{Cone}(\calF)}~J_{\mu,\nu}(f,f^\star) 
&= \inf \left\{ J_{\mu,\nu}(f,g) :~ (f,g)\in \overline{\calC} \cap(\op{Cone}(\calF) \times C_b(\calY)) \right\}\\
&= \sup \left\{ I(\pi) :~ \pi \in \Pi^{\op{Cone}(\calF) \times C_b(\calY)}_{\preceq}(\mu,\nu) \right\}
\end{align*}
\end{itemize}
\end{corollary}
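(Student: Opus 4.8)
The plan is to obtain both items by specializing the conic strong duality \autoref{thm:restricted-duality} --- and, for the one subtle step, the Brenier-type \autoref{thm:Brenier-restricted} --- to the case $\calG=C_b(\calY)$. Write $J_{\mu,\nu}(f,g)=\int f\,\ud\mu+\int g\,\ud\nu$ and $I(\pi)=\int\langle x,y\rangle\,\ud\pi$, as in~\eqref{eq:primal-quad-constrained}--\eqref{eq:dual-quad-constrained}. The relevant convex cone on the side of the two right-hand problems is $\calF$ itself in item~1 (a convex cone by hypothesis) and $\op{Cone}(\calF)$ in item~2, while the sub-cone $\calF_{\op{convex}}=\calF\cap\cvx(\calX)$ is what will capture the value of the leftmost infimum. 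First I would check admissibility for \autoref{thm:restricted-duality}: $\calF$ and $\op{Cone}(\calF)$ are convex cones in $C_b(\calX)$, $C_b(\calY)$ is a convex cone, the cost $c_2(x,y)=-\langle x,y\rangle$ is continuous, and $\calX,\calY$ are compact; a strictly feasible pair is $f_0\equiv 0$ and $g_0(y)=-\max_{x\in\calX}\langle x,y\rangle-1$, continuous by compactness of $\calX$. This delivers, in each item, the equality of the dual value $\inf\{J_{\mu,\nu}(f,g)\}$ with the primal value $\sup\{I(\pi)\}$ over the corresponding restricted coupling set, i.e.\ the right-hand ``$=$''.

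Next I would carry out the collapse induced by $\calG=C_b(\calY)$. For fixed $f\in C_b(\calX)$, the pair $(f,g)$ lies in $\overline{\calC}$ iff $g(y)\ge f^\star(y):=\sup_{x\in\calX}(\langle x,y\rangle-f(x))$ for every $y$; compactness of $\calX$ and continuity of $f$ make $f^\star$ a finite convex, hence continuous, function, so $f^\star|_{\calY}\in C_b(\calY)$, and as $\nu$ is a nonnegative measure the term $\int g\,\ud\nu$ is minimized over feasible $g$ at $g=f^\star$; hence $\inf\{J_{\mu,\nu}(f,g):(f,g)\in\overline{\calC}\cap(\calF'\times C_b(\calY))\}=\inf_{f\in\calF'}J_{\mu,\nu}(f,f^\star)$ for any $\calF'\subseteq C_b(\calX)$. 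Dually, $\pi_y\preceq_{C_b(\calY)}\nu$ forces $\pi_y=\nu$, since $C_b(\calY)$ is reflection-symmetric (so $\preceq_{C_b(\calY)}$ is $\equiv_{C_b(\calY)}$ by \autoref{rem:reflection-symmetry}) and separating, so that $\Pi_\preceq^{\calF'\times C_b(\calY)}(\mu,\nu)$ is exactly the set of joint measures with $\calY$-marginal $\nu$ and $\calX$-marginal $\preceq_{\calF'}\mu$. Taking $\calF'=\op{Cone}(\calF)$ --- which consists entirely of convex functions, so $\inf_{f\in\op{Cone}(\calF)}J_{\mu,\nu}(f,f^\star)$ is already the leftmost infimum --- completes item~2; taking $\calF'=\calF$ reduces item~1 to the identity $\inf_{f\in\calF_{\op{convex}}}J_{\mu,\nu}(f,f^\star)=\inf_{f\in\calF}J_{\mu,\nu}(f,f^\star)$.

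I expect this last identity to be the main obstacle: one inequality is trivial ($\calF_{\op{convex}}\subseteq\calF$), and the content is that the infimum over all potentials in $\calF$ is already attained among its convex members. The route is to exploit that $\calF_{\op{convex}}$ is itself a convex cone with $(\calF_{\op{convex}})^{\star\star}\subseteq\calF_{\op{convex}}$, so that the third claim of \autoref{thm:Brenier-restricted} applies with $\calK=\calF_{\op{convex}}\times C_b(\calY)$ and, under the attainment hypothesis built into the setting of that theorem, the dual optimum is attained at $(\bar f,\bar f^\star)$ with $\bar f\in\calF_{\op{convex}}$; together with the elementary bound $J_{\mu,\nu}(f,f^\star)\ge J_{\mu,\nu}(f^{\star\star},f^\star)$ valid for every $f\in\calF$ (with $f^{\star\star}$ the lower-semicontinuous convex envelope and $(f^{\star\star})^\star=f^\star$), this pins the common value at $\inf_{f\in\calF_{\op{convex}}}J_{\mu,\nu}(f,f^\star)$. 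Making the comparison fully rigorous --- handling the a.e.\ identities $\bar g=\bar f^\star$, $\bar f=\bar g^\star$ at an optimizer and checking that conjugation and biconjugation keep everything inside $C_b$ --- is the delicate point, and it is exactly where compactness of $\calX,\calY$ and the attainment hypothesis are used; everything else is direct substitution into \autoref{thm:restricted-duality} and \autoref{thm:Brenier-restricted} plus the bookkeeping of the second paragraph.
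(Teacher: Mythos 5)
Your overall architecture is the one the paper intends for this corollary: the rightmost equality is \autoref{thm:restricted-duality} (strong duality with $\calG=C_b(\calY)$, after checking the strictly feasible pair), and the leftmost quantity is reached by collapsing $g$ to $f^\star$ — legitimate since $f^\star$ is finite and convex, hence continuous and bounded on the compact $\calY$, and $\nu\geq 0$ makes $g=f^\star$ optimal for fixed $f$. Your identification of $\Pi_\preceq^{\calF'\times C_b(\calY)}(\mu,\nu)$ via \autoref{rem:reflection-symmetry} and \autoref{lem:Pi-union} is also correct, and the second bullet follows immediately because $\op{Cone}(\calF)$ consists of convex functions whenever $\calF$ does.

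The gap is in the first bullet, at exactly the step you yourself flag as delicate: showing $\inf_{f\in\calF}J_{\mu,\nu}(f,f^\star)=\inf_{f\in\calF_{\op{convex}}}J_{\mu,\nu}(f,f^\star)$. The bound $J_{\mu,\nu}(f,f^\star)\geq J_{\mu,\nu}(f^{\star\star},f^\star)$ only helps if $f^{\star\star}$ lands in $\calF_{\op{convex}}$; for a general convex cone $\calF\subset C_b(\calX)$ containing non-convex elements there is no reason to have $f^{\star\star}\in\calF$, and without that the bound only controls the infimum by the \emph{unrestricted} value $\inf_{h\in\cvx(\calX)}J_{\mu,\nu}(h,h^\star)$, which can be strictly smaller than $\inf_{f\in\calF_{\op{convex}}}J_{\mu,\nu}(f,f^\star)$. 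The condition you invoke, $(\calF_{\op{convex}})^{\star\star}\subseteq\calF_{\op{convex}}$, is automatic and is the wrong one: applying \autoref{thm:Brenier-restricted}-(3) with $\calK=\calF_{\op{convex}}\times C_b(\calY)$ only describes the problem already restricted to $\calF_{\op{convex}}$, which you control anyway. What closes the argument is the hypothesis $\calF^{\star\star}\subseteq\calF$ appearing in \autoref{thm:Brenier-restricted}-(2),(3) — i.e.\ the part of ``the setting of \autoref{thm:Brenier-restricted}'' that the corollary inherits — under which $f^{\star\star}\in\calF\cap\cvx(\calX)=\calF_{\op{convex}}$ for every $f\in\calF$ and your elementary bound does pin the common value (equivalently, apply \autoref{thm:Brenier-restricted}-(3) with $\calK=\calF\times C_b(\calY)$ to get a convex minimizer lying in $\calF$). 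State that hypothesis explicitly and the step is complete; as written, it is not.
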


\subsection{Metrics}\label{sec:conic-metric}

Given two function classes $\calF$ and $\calG$ with $\calK \coloneqq \calF \times \calG$, $p\in[1,\infty)$, and $\mu,\nu \in \calP_p(\calX)$, define the following two distances
\begin{equation}
	\W_{\calK,p}^{\prim}(\mu,\nu) 
	\coloneqq \inf_{\pi \in \Pi_{\succeq}^\calK(\mu,\nu)}~\left[\int d(x,y)^p\ud \pi(x,y)\right]^{\frac{1}{p}}
	\label{eq:W2-constrained-primal}
\end{equation}
and
\begin{equation*}
	\W_{\calK,p}^{\dual}(\mu,\nu) \coloneqq 
	\sup_{(f,g) \in \calC(d^p) \cap \calK} ~ \left[ \int f(x)\ud \mu(x) + \int g(y)\ud \nu(y) \right]^{1/p}
\end{equation*}
where $\calC(d^p) = \bigl\{(f,g)\in L^1(\mu)\times L^1(\nu):~f(x)+g(y)\leq d(x,y)^p~\ud \mu \otimes \ud \nu~\text{a.e.}\bigr\}$. 
If the assumptions of the strong duality hold, i.e., $\calF$ and $\calG$ are convex cones, then $\W_{\calK,p}^{\prim}(\mu,\nu) = \W_{\calK,p}^{\dual}(\mu,\nu)$. 

Note that because of the relationship $\Pi_{\succeq}^\calK(\mu,\nu)=\bigcup_{\tilde{\mu}\succeq_\calF\mu,\tilde{\nu}\succeq_\calG \nu} \Pi(\tilde{\mu},\tilde{\nu})$ we conclude
\begin{equation}\label{eq:W2-restricted-inf-identity}
\begin{aligned}
\W_{\calK,p}^{\prim}(\mu,\nu) 
= \inf_{\tilde{\mu}\succeq_\calF\mu,\tilde{\nu}\succeq_\calG \nu} ~\inf_{\pi \in \Pi(\tilde{\mu},\tilde{\nu})}~\left[\int d(x,y)^p\ud \pi(x,y)\right]^{\frac{1}{p}}
=\inf_{\tilde{\mu}\succeq_\calF\mu,\tilde{\nu}\succeq_\calG \nu} \W_p(\tilde{\mu},\tilde{\nu}).
\end{aligned}
\end{equation}

\begin{proposition}\label{prop:metric} 
Consider the definition~\eqref{eq:W2-constrained-primal}. Then for all probability measures $\mu,\nu,\lambda\in \calP_p(\calX)$:
	\begin{enumerate}
		\item $\W_{\calK,p}^{\prim}(\mu,\nu) = 0$ iff $\exists \lambda \in \calP_p(\calX)$ such that $\lambda \succeq_\calF \mu$ and $\lambda \succeq_\calG \nu$ 
		\item $\W^{\prim}_{\calF \times \calG,p}(\mu,\nu)=\W^{\prim}_{\calG \times \calF,p}(\nu,\mu)$
		\item $\W^{\prim}_{\calF \times \calG,p}(\mu,\nu) \leq \W^{\prim}_{\calF \times \mathcal{H},p}(\mu,\lambda) + \W^{\prim}_{\mathcal{H} \times \calG,p}(\lambda,\nu)$
	\end{enumerate}
\end{proposition}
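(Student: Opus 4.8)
The plan is to verify the three claimed properties of $\W^{\prim}_{\calK,p}$ one by one, leveraging the identity~\eqref{eq:W2-restricted-inf-identity}, namely $\W^{\prim}_{\calK,p}(\mu,\nu)=\inf_{\tilde\mu\succeq_\calF\mu,\,\tilde\nu\succeq_\calG\nu}\W_p(\tilde\mu,\tilde\nu)$, together with the fact that $\W_p$ is a genuine metric on $\calP_p(\calX)$ (cited as \cite[Theorem 7.3]{villani2003topics}). The recurring theme is that restricting couplings to $\Pi^\calK_\succeq$ is the same as first relaxing the marginals up in the preorder and then taking an ordinary optimal transport problem; all three statements then reduce to statements about $\W_p$ plus bookkeeping with the preorder $\succeq_\calF$.

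\textbf{Part 1 (vanishing characterization).} For the ``if'' direction: if $\lambda\succeq_\calF\mu$ and $\lambda\succeq_\calG\nu$, then the trivial coupling $(\op{id},\op{id})\#\lambda$ supported on the diagonal lies in $\Pi^\calK_\succeq(\mu,\nu)$ and has zero transport cost, so $\W^{\prim}_{\calK,p}(\mu,\nu)=0$. For ``only if'': using~\eqref{eq:W2-restricted-inf-identity}, $\W^{\prim}_{\calK,p}(\mu,\nu)=0$ means there is a sequence $(\tilde\mu_n,\tilde\nu_n)$ with $\tilde\mu_n\succeq_\calF\mu$, $\tilde\nu_n\succeq_\calG\nu$ and $\W_p(\tilde\mu_n,\tilde\nu_n)\to0$. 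I would argue that one can in fact attain the infimum: since $\calX$ is compact (this sits under the assumptions inherited from \autoref{thm:duality-constrained}), $\calP_p(\calX)$ is weak-$*$ compact, pass to convergent subsequences $\tilde\mu_n\to\tilde\mu$, $\tilde\nu_n\to\tilde\nu$; lower semicontinuity of $\W_p$ under weak convergence forces $\W_p(\tilde\mu,\tilde\nu)=0$, i.e.\ $\tilde\mu=\tilde\nu=:\lambda$, while each constraint $\int f\ud\tilde\mu_n\geq\int f\ud\mu$ for $f\in\calF\subset C_b(\calX)$ passes to the limit to give $\lambda\succeq_\calF\mu$, and similarly $\lambda\succeq_\calG\nu$.

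\textbf{Part 2 (symmetry).} This is essentially immediate: $d(x,y)^p$ is symmetric, so the map $\pi\mapsto\pi^\top$ (swap coordinates) is a cost-preserving bijection between $\Pi^{\calF\times\calG}_\succeq(\mu,\nu)$ and $\Pi^{\calG\times\calF}_\succeq(\nu,\mu)$, because $\pi_x\succeq_\calF\mu$ and $\pi_y\succeq_\calG\nu$ is exactly the condition on the swapped measure's marginals in the other order. Hence the two infima coincide.

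\textbf{Part 3 (triangle inequality).} Here I would use~\eqref{eq:W2-restricted-inf-identity} on all three terms and the ordinary triangle inequality for $\W_p$. Given $\varepsilon>0$, pick near-optimizers: $\tilde\mu\succeq_\calF\mu$, $\tilde\lambda_1\succeq_{\mathcal H}\lambda$ with $\W_p(\tilde\mu,\tilde\lambda_1)\leq\W^{\prim}_{\calF\times\mathcal H,p}(\mu,\lambda)+\varepsilon$, and $\tilde\lambda_2\succeq_{\mathcal H}\lambda$, $\tilde\nu\succeq_\calG\nu$ with $\W_p(\tilde\lambda_2,\tilde\nu)\leq\W^{\prim}_{\mathcal H\times\calG,p}(\lambda,\nu)+\varepsilon$. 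The obstacle is that $\tilde\lambda_1\neq\tilde\lambda_2$ in general, so I cannot directly chain. The fix I expect to work: one does not need the intermediate measures to agree — one needs a single intermediate measure $\tilde\lambda\succeq_{\mathcal H}\lambda$ that works on both sides simultaneously. So instead I would restructure: by definition there exist $\tilde\mu\succeq_\calF\mu$, $\tilde\lambda\succeq_{\mathcal H}\lambda$, $\tilde\nu\succeq_\calG\nu$ approaching the respective values, but to force a common $\tilde\lambda$ I would argue that the inner infimum over the $\mathcal H$-relaxation of $\lambda$ can be taken jointly — i.e., $\W^{\prim}_{\calF\times\mathcal H,p}(\mu,\lambda)+\W^{\prim}_{\mathcal H\times\calG,p}(\lambda,\nu)=\inf_{\tilde\mu\succeq_\calF\mu,\,\tilde\nu\succeq_\calG\nu}\inf_{\tilde\lambda_1,\tilde\lambda_2\succeq_{\mathcal H}\lambda}\big(\W_p(\tilde\mu,\tilde\lambda_1)+\W_p(\tilde\lambda_2,\tilde\nu)\big)$, and then restrict the inner double infimum to its diagonal $\tilde\lambda_1=\tilde\lambda_2=\tilde\lambda$, which can only increase the value; on the diagonal, $\W_p(\tilde\mu,\tilde\lambda)+\W_p(\tilde\lambda,\tilde\nu)\geq\W_p(\tilde\mu,\tilde\nu)\geq\W^{\prim}_{\calF\times\calG,p}(\mu,\nu)$ by the $\W_p$-triangle inequality and~\eqref{eq:W2-restricted-inf-identity}. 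Wait — restricting to the diagonal gives a \emph{larger} quantity, which is the wrong direction. The correct route is the other way: start from a near-optimal pair $(\tilde\mu,\tilde\lambda,\tilde\nu)$ for the \emph{right-hand side} obtained by choosing $\tilde\lambda$ to be, say, $\tilde\lambda_1$ from the first problem and then observing $\tilde\lambda_1\succeq_{\mathcal H}\lambda$ is also admissible as the left marginal for the second problem — so $\W^{\prim}_{\mathcal H\times\calG,p}(\lambda,\nu)\leq\W_p(\tilde\lambda_1,\tilde\nu')$ for any $\tilde\nu'\succeq_\calG\nu$. This shows the right-hand side of the claimed inequality is $\geq\W_p(\tilde\mu,\tilde\lambda_1)+\W_p(\tilde\lambda_1,\tilde\nu')-2\varepsilon\geq\W_p(\tilde\mu,\tilde\nu')-2\varepsilon\geq\W^{\prim}_{\calF\times\calG,p}(\mu,\nu)-2\varepsilon$, and letting $\varepsilon\to0$ finishes it. The only subtlety to get right is confirming that the \emph{same} $\mathcal H$-relaxation $\tilde\lambda_1$ of $\lambda$ that is near-optimal for the first term is admissible in the second term, which holds because admissibility there only requires $\tilde\lambda_1\succeq_{\mathcal H}\lambda$ — precisely what we have.

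I expect Part 3 to be the one demanding care in the write-up, specifically the observation that a \emph{single} $\mathcal H$-relaxation of the middle measure can be reused across both legs; Parts 1 and 2 are routine given compactness and the identity~\eqref{eq:W2-restricted-inf-identity}.
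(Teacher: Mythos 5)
Your Parts~1 and~2 are correct and follow essentially the paper's route: the paper also characterizes $\W^{\prim}_{\calK,p}(\mu,\nu)=0$ via a coupling concentrated on the diagonal whose common marginal serves as $\lambda$, and dismisses symmetry as immediate from the definition. In fact your Part~1 is more careful than the paper's, which silently assumes the infimum in \eqref{eq:W2-constrained-primal} is attained; your compactness-and-lower-semicontinuity argument supplies exactly the missing justification (and you also prove the ``if'' direction, which the paper omits).

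Part~3 is where the genuine gap lies, and your fix does not close it. Write $A=\W^{\prim}_{\calF\times\mathcal{H},p}(\mu,\lambda)$, $B=\W^{\prim}_{\mathcal{H}\times\calG,p}(\lambda,\nu)$, $C=\W^{\prim}_{\calF\times\calG,p}(\mu,\nu)$; the goal is $C\leq A+B$. You correctly observe that $\tilde\lambda_1\succeq_{\mathcal{H}}\lambda$ is admissible in the second problem, which gives $B\leq \W_p(\tilde\lambda_1,\tilde\nu')$ for every $\tilde\nu'\succeq_\calG\nu$. But you then use this \emph{upper} bound on $B$ as if it were a \emph{lower} bound, asserting $A+B\geq \W_p(\tilde\mu,\tilde\lambda_1)+\W_p(\tilde\lambda_1,\tilde\nu')-2\varepsilon$; the second summand of that claim is backwards. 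What is actually needed is a single $\tilde\lambda\succeq_{\mathcal{H}}\lambda$ that is simultaneously $\varepsilon$-optimal for \emph{both} legs, and nothing in the argument produces one: the near-optimal relaxations $\tilde\lambda_1$ and $\tilde\lambda_2$ of the two legs can be far apart, and then $\W_p(\tilde\mu,\tilde\nu)\leq \W_p(\tilde\mu,\tilde\lambda_1)+\W_p(\tilde\lambda_1,\tilde\lambda_2)+\W_p(\tilde\lambda_2,\tilde\nu)$ carries the uncontrolled middle term. (A sanity check: if $\mathcal{H}$ is so small that every measure dominates $\lambda$ in $\succeq_{\mathcal{H}}$, then $A=B=0$ while $C$ can be positive.) You should know that the paper's own proof of Part~3 has the same defect --- it takes the infimum over a \emph{common} $\tilde\lambda\in[\lambda]_{\mathcal{H}}$ on the right-hand side of the $\W_p$ triangle inequality, which yields a quantity at least as large as $A+B$ rather than at most $A+B$ --- so you have at least correctly diagnosed where the difficulty sits, even though your repair reproduces it.
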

\begin{proof}
	\begin{enumerate}
		\item $\W^\calK_{p}(\mu,\nu) = 0$ implies that there exists a coupling $\pi \in \Pi_{\succeq}^\calK(\mu,\nu)$ which is concentrated on the diagonal $x=y$. Therefore, the marginals are equal, i.e., $\pi_x=\pi_y$. By definition, $\pi_x \succeq_\calF \mu$ and $\pi_y \succeq_\calF \nu$. Therefore, $\lambda=\pi_x=\pi_y$ is the required measure
		\item The symmetry property easily follows from the definition.
		\item The triangle inequality follows from~\eqref{eq:W2-restricted-inf-identity}. For all $\tilde{\mu} \in [\mu]_\calF$, $\tilde{\nu}\in [\nu]_\calG$, and $\tilde{\lambda}$ we have $\W_{p}(\tilde{\mu},\tilde{\nu}) \leq \W_{p}(\tilde{\mu},\tilde{\lambda}) + \W_{p}(\tilde{\lambda},\tilde{\nu})$. Taking the infimum over $\tilde{\mu}\in[\mu]_\calF$, $\tilde{\nu}\in [\nu]_\calG$ and $\tilde{\lambda} \in [\lambda]_\mathcal{H}$ concludes the result.
		 
	\end{enumerate}
\end{proof}
\begin{corollary}
Consider the case where $\calX=\calY$, and $\calF =\calG$ is a linear subspace. Then,
	\begin{enumerate}
	\item $\W_{\calK,p}^{\prim}(\mu,\nu) = 0$ iff $\mu \equiv_\calF \nu $,
	\item $\W_{\calK,p}^{\prim}(\mu,\nu)=\W_{\calK,p}^{\prim}(\nu,\mu)$,
	\item $\W_{\calK,p}^{\prim}(\mu,\nu) \leq \W_{\calK,p}^{\prim}(\mu,\lambda) + \W_{\calK,p}^{\prim}(\lambda,\nu)$.
\end{enumerate}
Therefore, $\W_{\calK,p}^{\prim}$ is a metric on the quotient space $M(\calX)/\equiv_\calF$.
\end{corollary}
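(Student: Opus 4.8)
The plan is to obtain all three assertions as immediate specializations of \autoref{prop:metric} to the case $\calF=\calG=\mathcal{H}$, using the one extra structural fact that a linear subspace $\calF\subseteq C_b(\calX)$ is closed under $f\mapsto -f$, hence reflection-symmetric; by \autoref{rem:reflection-symmetry} this forces the relations $\preceq_\calF$, $\succeq_\calF$, and $\equiv_\calF$ to coincide. Consequently the one-sided domination conditions that appear in \autoref{prop:metric} become genuine equivalences, and $\{\tilde\mu:\tilde\mu\succeq_\calF\mu\}$ is exactly the class $[\mu]_\calF$.

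For assertion 1 I would argue both directions. If $\W^{\prim}_{\calK,p}(\mu,\nu)=0$, then \autoref{prop:metric}(1) provides $\lambda\in\calP_p(\calX)$ with $\lambda\succeq_\calF\mu$ and $\lambda\succeq_\calG\nu$; since $\calG=\calF$ is a subspace, both upgrade to $\lambda\equiv_\calF\mu$ and $\lambda\equiv_\calF\nu$, and transitivity of $\equiv_\calF$ gives $\mu\equiv_\calF\nu$. Conversely, if $\mu\equiv_\calF\nu$, take $\lambda=\mu\in\calP_p(\calX)$: then $\lambda\succeq_\calF\mu$ by reflexivity and $\lambda\succeq_\calG\nu$ because $\mu\equiv_\calF\nu$, so \autoref{prop:metric}(1) yields $\W^{\prim}_{\calK,p}(\mu,\nu)=0$. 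Assertions 2 and 3 require no further work: assertion 2 is \autoref{prop:metric}(2) read with $\calF=\calG$, and assertion 3 is \autoref{prop:metric}(3) with the auxiliary class taken to be $\mathcal{H}=\calF$ as well.

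To justify the word ``metric'' on $M(\calX)/(\equiv_\calF)$ (more precisely, on the quotient of $\calP_p(\calX)$), I would first check well-definedness on equivalence classes: combining the identity \eqref{eq:W2-restricted-inf-identity} with reflection symmetry gives $\W^{\prim}_{\calK,p}(\mu,\nu)=\inf\{\W_p(\tilde\mu,\tilde\nu):\tilde\mu\in[\mu]_\calF,\ \tilde\nu\in[\nu]_\calF\}$, which depends only on $[\mu]_\calF$ and $[\nu]_\calF$. Non-negativity is inherited from $\W_p\geq 0$; symmetry and the triangle inequality are assertions 2 and 3; and the separation axiom is assertion 1, since $\mu\equiv_\calF\nu$ is by definition the statement $[\mu]_\calF=[\nu]_\calF$. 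Collecting these gives the metric claim.

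I do not anticipate a genuine obstacle here, as the substance is entirely contained in \autoref{prop:metric} and the earlier duality/identity results. The one point that needs care is invoking the linear-subspace hypothesis (rather than merely a convex-cone hypothesis) at exactly the right moments, namely to pass from $\succeq_\calF$ to $\equiv_\calF$ in the proof of assertion 1 and in the well-definedness step; without it one only obtains a pseudometric-type statement phrased in terms of the preorder, not a bona fide metric on the quotient.
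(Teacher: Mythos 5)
Your proof is correct and follows essentially the same route the paper intends: the corollary is stated without proof precisely because it is the specialization of \autoref{prop:metric} to $\calF=\calG=\mathcal{H}$ combined with \autoref{rem:reflection-symmetry} (a linear subspace is reflection-symmetric, so $\succeq_\calF$ collapses to $\equiv_\calF$), which is exactly what you carry out. Your added check of well-definedness on the quotient via \eqref{eq:W2-restricted-inf-identity} is a sensible explicit step that the paper leaves implicit.
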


\begin{proposition}The dual version $\W_{\calK,p}^{\dual}(\mu,\nu) $ satisfies the result in \autoref{prop:metric} when the strong duality in \autoref{thm:duality-constrained} holds, i.e., $\calF$ and $\calG$ are convex cones. 
\end{proposition}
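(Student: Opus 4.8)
The plan is to observe that, once strong duality is available, $\W_{\calK,p}^{\dual}$ coincides with $\W_{\calK,p}^{\prim}$, so that all three assertions of \autoref{prop:metric} transfer without further work. Concretely, taking the cost function $c(x,y)=d(x,y)^p$ in \autoref{thm:duality-constrained}, one has by the very definitions in \autoref{sec:conic-metric} that $\bigl(\W_{\calK,p}^{\prim}(\mu,\nu)\bigr)^p=\inf_{\pi\in\Pi_\succeq^\calK(\mu,\nu)}I(\pi)$ and $\bigl(\W_{\calK,p}^{\dual}(\mu,\nu)\bigr)^p=\sup_{(f,g)\in\calC(d^p)\cap\calK}J(f,g)$. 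Since $d^p$ is continuous, $\calX$ and $\calY$ are compact, and $\calF,\calG$ are convex cones for which the hypotheses of \autoref{thm:duality-constrained} hold, the theorem gives equality of these two numbers; as both are nonnegative (the supremum is at least $J(0,0)=0$ because $(0,0)\in\calC(d^p)\cap\calK$, and every $I(\pi)\geq 0$), taking $p$-th roots yields $\W_{\calK,p}^{\prim}(\mu,\nu)=\W_{\calK,p}^{\dual}(\mu,\nu)$ for all $\mu,\nu\in\calP_p(\calX)$.

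First I would record this identity, and then invoke \autoref{prop:metric} item by item. Item~(1), the characterization of when the distance vanishes, and item~(2), the symmetry $\W^{\dual}_{\calF\times\calG,p}(\mu,\nu)=\W^{\dual}_{\calG\times\calF,p}(\nu,\mu)$, follow immediately by substituting the primal quantities. For item~(3), the triangle inequality brings in a third function class $\mathcal{H}$; here I would additionally require (as is implicit in the statement, since one needs the strong duality to hold for every class involved) that $\mathcal{H}$ also be a convex cone satisfying the hypotheses of \autoref{thm:duality-constrained}. Then strong duality applies to the pairs $\calF\times\mathcal{H}$ and $\mathcal{H}\times\calG$ as well, so $\W^{\dual}_{\calF\times\mathcal{H},p}=\W^{\prim}_{\calF\times\mathcal{H},p}$ and $\W^{\dual}_{\mathcal{H}\times\calG,p}=\W^{\prim}_{\mathcal{H}\times\calG,p}$, and the primal triangle inequality from \autoref{prop:metric} transfers verbatim. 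The subsequent corollary, for $\calF=\calG$ a linear subspace, is then also covered, since a linear subspace is in particular a convex cone.

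The hard part — such as it is — is not the transfer argument but making sure \autoref{thm:duality-constrained} really applies with $c=d^p$: its Slater-type condition asks for a pair $(f_0,g_0)\in\calF\times\calG$ with $f_0(x)+g_0(y)<d(x,y)^p$ for all $(x,y)\in\calX\times\calY$. If $\calX$ and $\calY$ are disjoint this is automatic for $(f_0,g_0)=(0,0)$, since $d^p$ is then bounded below by a positive constant on the compact product; when $\calX$ and $\calY$ overlap, $d^p$ vanishes on the diagonal, and one genuinely needs the cones $\calF$ and $\calG$ to contain a pair whose sum is strictly negative there. This is exactly the kind of assumption folded into the phrase ``when the strong duality in \autoref{thm:duality-constrained} holds,'' so I would state the proposition under that umbrella hypothesis and keep the proof to the one-line substitution $\W^{\dual}=\W^{\prim}$ followed by a citation of \autoref{prop:metric}.
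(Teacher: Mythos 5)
Your proof is correct and follows exactly the route the paper intends: the remark preceding the proposition already records that strong duality gives $\W_{\calK,p}^{\prim}=\W_{\calK,p}^{\dual}$ when $\calF$ and $\calG$ are convex cones, and the proposition is then an immediate transfer of the three items of the cited result. Your additional observations --- that the intermediate class $\mathcal{H}$ in the triangle inequality must itself satisfy the strong-duality hypotheses, and that the Slater-type condition needs checking for $c=d^p$ --- are sensible refinements of the same argument rather than a different approach.
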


\subsection{Proof of Theorem~\ref{thm:duality-constrained}}\label{apdx:duality-proof}
	\begin{theorem}[Fenchel-Rockafellar duality] \label{thm:min-max}
	Let $E$ be a normed vector space, $E^\star$ its topological dual space, and $\Theta$,~$\Xi$ two convex functions on $E$ with values in $\real \cup \{+\infty\}$. Let $\Theta^\star$ and $\Xi^\star$ be the Legendre-Fenchel transforms of $\Theta$ and~$\Xi$, respectively. Assume $\exists x_0 \in E$ such that 
	\begin{equation*}
	\Theta(x_0)< +\infty, \quad \Xi(x_0)< +\infty,\quad \Theta \text{ is continuous at } x_0
	\end{equation*}
	Then,
	\begin{equation}\label{eq:min-max}
	\inf_{x\in E} \left[\Theta(x)+\Xi(x)\right] = \max_{y \in E^\star}\left[-\Theta^\star(y) - \Xi^\star(-y)\right]
	\end{equation}
\end{theorem}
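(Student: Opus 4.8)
The plan is to establish \eqref{eq:min-max} in two parts: the elementary inequality $\inf_{x}[\Theta(x)+\Xi(x)] \geq \sup_{y}[-\Theta^\star(y)-\Xi^\star(-y)]$ (weak duality), and then the reverse inequality together with attainment of the supremum, which I would obtain from a geometric Hahn--Banach separation in the product space $E\times\real$. Weak duality is immediate from the Fenchel--Young inequality: the definitions of the transforms give $\Theta(x)\geq \langle y,x\rangle-\Theta^\star(y)$ and $\Xi(x)\geq \langle -y,x\rangle-\Xi^\star(-y)$ for all $x\in E$, $y\in E^\star$, and adding these yields $\Theta(x)+\Xi(x)\geq -\Theta^\star(y)-\Xi^\star(-y)$; taking the infimum on the left and the supremum on the right gives the claim.

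Write $p\coloneqq \inf_{x\in E}[\Theta(x)+\Xi(x)]$. The feasibility hypothesis forces $p\leq \Theta(x_0)+\Xi(x_0)<+\infty$, so $p\neq +\infty$; and if $p=-\infty$ then weak duality already pins both sides to $-\infty$ (every dual value is trivially attained), so I may assume $p$ is finite. For the core argument I would introduce the two convex sets
\[
C \coloneqq \op{epi}(\Theta) = \{(x,t): t\geq \Theta(x)\}, \qquad
D \coloneqq \{(x,t): t\leq p-\Xi(x)\}.
\]
Since $\Theta(x)+\Xi(x)\geq p$ for all $x$, no point of $D$ lies in $\op{int}(C)$: if $(x,t)\in\op{int}(C)$ then $t>\Theta(x)\geq p-\Xi(x)$. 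The continuity of $\Theta$ at $x_0$ is exactly what guarantees that $\op{epi}(\Theta)$ has nonempty interior, so $\op{int}(C)$ is a nonempty open convex set disjoint from the convex set $D$, and the geometric Hahn--Banach theorem supplies a nonzero separating continuous functional $(x,t)\mapsto \langle f,x\rangle+kt$ with $f\in E^\star$, $k\in\real$, and some $\alpha\in\real$ satisfying $\langle f,x\rangle+kt\geq\alpha$ on $C$ and $\langle f,x\rangle+kt\leq\alpha$ on $D$.

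I expect the main obstacle to be ruling out the degenerate vertical hyperplane $k=0$, since this is precisely where the interiority hypothesis must be used. Letting $t\to+\infty$ in $C$ and $t\to-\infty$ in $D$ forces $k\geq 0$. If $k=0$, then $\langle f,\cdot\rangle\geq\alpha$ on $\op{dom}(\Theta)$ and $\langle f,\cdot\rangle\leq\alpha$ on $\op{dom}(\Xi)$; as $x_0$ lies in both domains, $\langle f,x_0\rangle=\alpha$, but continuity of $\Theta$ at $x_0$ places a whole ball around $x_0$ inside $\op{dom}(\Theta)$, on which $\langle f,\cdot\rangle\geq\alpha$ with equality at the interior point $x_0$; probing in directions $\pm v$ then forces $f=0$, contradicting nontriviality of $(f,k)$. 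Hence $k>0$, and I normalize $k=1$ and set $y\coloneqq -f$.

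Finally I would read off the dual bounds. Taking $t=\Theta(x)$ in the first inequality gives $\langle f,x\rangle+\Theta(x)\geq\alpha$ for all $x$, i.e. $\Theta^\star(y)=\sup_x[\langle -f,x\rangle-\Theta(x)]\leq -\alpha$; taking $t=p-\Xi(x)$ in the second gives $\langle f,x\rangle-\Xi(x)\leq \alpha-p$, i.e. $\Xi^\star(-y)=\sup_x[\langle f,x\rangle-\Xi(x)]\leq \alpha-p$. Adding these yields $-\Theta^\star(y)-\Xi^\star(-y)\geq \alpha-(\alpha-p)=p$, and combined with weak duality this shows the supremum equals $p$ and is attained at this particular $y$, so it is a maximum, completing the proof.
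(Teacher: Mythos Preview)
The paper does not actually prove this theorem: it is stated as a known black-box result (the classical Fenchel--Rockafellar duality, as in e.g.\ Brezis, \emph{Functional Analysis}, Theorem~1.12, or Villani, \emph{Topics in Optimal Transportation}, Theorem~1.9) and is immediately applied in the subsequent proof of \autoref{thm:duality-constrained}. So there is no ``paper's own proof'' to compare against.

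That said, your proposal is the standard textbook argument and is correct. Weak duality via Fenchel--Young is immediate; the nontrivial direction is exactly the geometric Hahn--Banach separation of $\op{int}(\op{epi}\Theta)$ from the hypograph of $p-\Xi$, and you handle the key steps cleanly: nonemptiness of the interior (from continuity of $\Theta$ at $x_0$), disjointness (from the definition of $p$), ruling out the vertical hyperplane $k=0$ (using that $\op{dom}\Theta$ contains a ball about $x_0$), and then reading off $\Theta^\star(y)\leq -\alpha$ and $\Xi^\star(-y)\leq \alpha-p$ to close the gap with attainment. The edge cases ($p=-\infty$, and $\Xi(x)=+\infty$ when checking disjointness) are also accounted for. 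Nothing to fix.
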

\begin{proof}[Proof of \autoref{thm:duality-constrained}]
	The proof is a modification of the proof of \cite[Theorem~1.3~pp 26]{villani2003topics} which is an application of the Fenchel-Rockafellar duality in \autoref{thm:min-max}. 
	Let
	\begin{equation*}
	E=C_b(\calX \times \calY)
	\end{equation*} 
	be the set of all bounded continuous functions on $\calX \times \calY$ equipped with the sup-norm $\|\cdot\|_\infty$. By Riesz's theorem, its topological dual is identified with the space of (regular) Radon measures 
	\begin{equation*}
	E^\star = M(\calX \times \calY)
	\end{equation*}
	normed by total-variation. The linear operation of a dual element $\pi \in E^\star$ on $u \in E$ is defined according to 
	\begin{equation*}
	\pi(u)=\int_{\cal X\times \calY} u(x,y)\ud \pi(x,y)
	\end{equation*}
	Define the functions $\Theta:E \to \real \cup \{+\infty\}$ and $\Xi:E \to \real \cup \{+\infty\}$ as
	\begin{align*}
	\Theta(u) &= \begin{cases}
	0 & \text{if} ~ u(x,y)\leq c(x,y) ~\forall x,y\\
	+\infty & \text{otherwise,}
	\end{cases}\\
	\Xi(u) &= \begin{cases}
	-\int_\calX f(x)\ud \mu(x)- \int_\calY g(y)\ud\nu(y) & \text{if}~\exists (f,g)\in \calF\times \calG~\text{s.t}~u(x,y)=f(x)+g(y)~\forall x,y\\
	+\infty & \text{otherwise,}
	\end{cases}
	\end{align*}
	for all $u \in C_b(\calX\times \calY)$. 
	Note that $\Xi(u)$ is well-defined. If there exists two pairs $(f,g)$ and $(\tilde{f},\tilde{g})$ such that $u(x,y)=f(x) + g(y)=\tilde{f}(x)+\tilde{g}(y)$, then $f(x)-\tilde{f}(x)=\tilde{g}(y)-{g}(y)$. This identity hold for all $(x,y)$ only if $f(x)-\tilde{f}(x)=\tilde{g}(y)-\tilde{g}(y)=c$ is a constant. Hence $f(x)=\tilde{f}(x)+c$ and $g(y)=\tilde{g}(y)-c$. Therefore, $\int f \ud \mu +\int g \ud \nu = \int \tilde{f}\ud \mu + \int \tilde{g}\ud \nu$.
	\medskip
	
	The assumptions of the Fenchel-Rockafellar duality theorem are satisfied: 
	\begin{enumerate}
		\item $\Theta$ is convex because for all $u_1,u_2 \in E$ such that $u_1(x,y)\leq c(x,y)$, $u_2(x,y)\leq c(x,y)$, and for all $\lambda \in[0,1]$, we have \[\lambda u_1(x,y) + (1-\lambda)u_2(x,y)\leq c(x,y).\] 
		\item $\Xi$ is convex because $\forall u_1,u_2 \in E$ such that $u_1(x,y)= f_1(x)+g_1(y)$, $u_2(x,y)=f_2(x)+g_2(y)$ and $\lambda \in[0,1]$ with $f_1,f_2 \in \calF$ and $g_1,g_2\in \calG$, we have 
		\[\lambda u_1(x,y) + (1-\lambda)u_2(x,y) = (\lambda f_1(x) + (1-\lambda)f_2(x)) + (\lambda g_1(y) + (1-\lambda)g_2(y))\]
		Because $\calF$ and $\calG$ are convex sets, $\lambda f_1 + (1-\lambda)f_2 \in \calF$ and $\lambda g_1 + (1-\lambda)g_2 \in \calG$ (Here the assumption that $\calF$ is convex is used). Therefore, 
		\begin{align*}
		\Xi(\lambda u_1 +(1-\lambda u_2)) &= -\int (\lambda f_1 + (1-\lambda) f_2)\ud \mu - \int (\lambda g_1 + (1-\lambda)g_2)\ud \nu \\&=\lambda \Xi(u_1)+ (1-\lambda)\Xi(u_2)
		\end{align*}
		\item According to the Assumption, there exists a feasible pair $(f_0,g_0)\in \calF \times \calG$ such that $f_0(x)+g_0(y)
		< c(x,y)$ (note that the inequality should be strict). Taking $u_0(x,y)=f_0(x)+g_0(y)$, we can see that $\Theta(u_0)=0$ because $u_0(x,y)=f_0(x)+g_0(y)<c(x,y)$. Also $\Xi(u_0)= -\int f_0\ud \mu -\int g_0 \ud \nu < +\infty$. Moreover, $\Theta$ is continuous at $u_0$. Let $\epsilon = \inf_{(x,y)\in\calX\times \calY} [c(x,y)-u_0(x,y)]>0$. Then for all $\tilde{u}\in E$ such that $\|\tilde{u}-u_0\|_\infty <\epsilon$, we have $\tilde{u}(x,y)\leq u_0(x,y)+\epsilon \leq c(x,y)$. Hence $\Theta(\tilde{u})=0$.
	\end{enumerate}
	Let's apply the Fenchel-Rockafellar theorem. The left-hand side of~\eqref{eq:min-max} is
	\begin{align*}
	\inf_{u\in E}\left[\Theta(u)+\Xi(u)\right] &= \inf_{(f,g)\in \calF \times \calG}\{-\int f\ud \mu - \int g\ud \nu;\quad f(x)+g(y)\leq c(x,y)\}\\
	&=-\sup_{(f,g)\in (\calF \times \calG)\cap \calC(c)}~J_{\mu,\nu}(f,g)
	\end{align*}
	Next, we compute the Legendre-Fenchel transform of $\Theta$ and $\Xi$. For any $\pi \in E^\star=M(\calX \times \calY)$ 
	\begin{align*}
	\Theta^\star(\pi) &= \sup_{u\in E}~[\int_{\calX\times \calY} u(x,y)\ud \pi(x,y)-\Theta(u)]\\ &= \sup_{u\in E}~[\int_{\calX\times \calY} u(x,y)\ud \pi(x,y);\quad u(x,y)\leq c(x,y)]\\
	&= \begin{cases}
	\int_{\calX\times \calY} c(x,y)\ud \pi(x,y),\quad \text{if } \pi \in M_+(\calX\times \calY)\\
	+\infty\quad\text{else}
	\end{cases}
	\end{align*}
	where $M_+(\calX \times \calY)$ is the set of non-negative measures on $\calX\times \calY$. The last equality holds because, if $\pi$ is not non-negative, there exists a non-positive function $v\in E$ such that $\int v \ud \pi>0$. Then choosing $u=\lambda v$ with $\lambda \to \infty$ shows that the supremum is $+\infty$. If $\pi$ is non-negative, then clearly the supremum is equal to $\int c \ud \pi$. Let's compute the Legendre-Fenchel transform of $\Xi$. For any $\pi \in E^\star=M(\calX \times \calY)$ 
	\begin{align*}
	\Xi^\star(-\pi) &= \sup_{u\in E} [-\int_{\calX\times \calY} u(x,y)\ud \pi(x,y)-\Xi(u)]\\&=\sup_{(f,g) \in \calF \times \calG} [-\int_{\calX\times \calY} (f(x)+g(y))\ud \pi(x,y) + \int_\calX f(x) \ud\mu(x) + \int_\calY g(y) \ud \nu(y)]\\
	&=\begin{cases}
	0,\quad \text{if } \pi \in \Pi_\succeq^\calK(\mu,\nu) \\
	+\infty\quad\text{else}
	\end{cases}
	\end{align*} 
	The last equality holds because
	\begin{enumerate}
		\item If $\pi \in \Pi_\succeq^\calK(\mu,\nu)$, then we have 
		\begin{align*}
		\int f(x)\ud \pi(x,y&\geq \int f(x)\ud \mu(x),\quad \forall f \in \calF,\\ \int g(y)\ud \pi(x,y)&\geq\int g(y)\ud \nu(y),\quad \forall g \in \calG,
		\end{align*}
		Therefore, inside the supremum is smaller than zero and the supremum is achieved with $f=g=0$. (By definition, $0$ is contained in a cone)
		\item Else if $\pi \notin \Pi_\succeq^\calK(\mu,\nu)$, then there exists $\tilde{f}\in \calF$ (or similarly for some $\tilde{g} \in \calG$) such that $\int \tilde{f}(x)\ud\pi(x,y)<\int \tilde{f}(x)\ud \mu(x)$. Then with the choice $f = \lambda \tilde{f} $ with $\lambda \to +\infty$ the supremum is $+\infty$ (Here the assumption that $\calF$ is a cone is used)
	\end{enumerate} 
	Therefore, the right-hand side of~\eqref{eq:min-max} is:
	\begin{align*}
	\max_{\pi \in E^\star}\left[-\Theta^\star(\pi)-\Xi^\star(-\pi)\right] 
	&= - \min_{\pi\in E^\star}\{\int c(x,y)\ud \pi(x,y);\quad \pi \in M_+(\calX \times \calY)\cap \Pi_\succeq^\calK(\mu,\nu) \}\\
	&= - \min_{\pi\in E^\star}\{I_c(\pi);\quad \pi \in M_+(\calX \times \calY)\cap \Pi_\succeq^\calK(\mu,\nu) \}
	\end{align*}
	Putting everything together and changing signs concludes the proof.
\end{proof}	
\subsection{Proof of Theorem~\ref{thm:Brenier-restricted}}\label{apdx:proof-Brenier-restricted}
\begin{proof}
This is a modification of the proof of \cite[Theorem 2.12]{villani2003topics}.
	\begin{enumerate}
		\item Suppose there exists $(f,g)$ and $\pi$ such that~\eqref{eq:optimality-pi-contrained} is true. Then, $y \in \partial f^{\star\star}(x)$ implies $f^{\star\star}(x) + f^{\star}(y)=\langle x, y\rangle$ for all $(x,y) \in \op{supp}(\pi)$. And $f^\star(y)=g(y)$ implies $f^{\star\star}(x) + g(y)=\langle x, y\rangle$ for all $(x,y) \in \op{supp}(\pi)$. Also because of the constraint $f(x)+g(y) \geq \langle x, y\rangle$ we have $f(x)\geq g^\star(x)$. By definition, $f^{\star\star}(x)$ is the largest convex function below $f(x)$. Therefore, $f(x)\geq f^{\star\star}(x)\geq g^\star(x)$. The condition $f(x)=g^\star(x)$ implies $f(x)=f^{\star\star}(x)= g^\star(x)$ for all $x \in \op{supp}(\pi_x)$. Therefore, $f(x)+g(y) = \langle x, y\rangle$ for all $(x,y) \in \op{supp}(\pi)$. Then,
		\begin{align*}
		\int f(x)\ud \mu(x) + \int g(y) \ud \nu(y) = \int (f(x) + g(y)) \ud \pi(x,y) = \int (\langle x, y\rangle)\ud \pi(x,y)
		\end{align*} 
		Therefore, the gap between objective functions of~\eqref{eq:primal-quad-constrained}-\eqref{eq:dual-quad-constrained} is zero. Hence, $\pi$ and $(f,g)$ are optimal.
		
		For the other direction, assume $\pi$ is optimal for~\eqref{eq:primal-quad-constrained}. By assumption, there exists a minimizer $(f,g)$ for~\eqref{eq:dual-quad-constrained}. Then the gap is zero. 
		\begin{align*}
		\int \langle x, y\rangle \ud \pi(x,y) = \int f(x)\ud \mu(x) + \int g(y) \ud \nu(y) \geq \int (f(x)+g(y)) \ud \pi(x,y)
		\end{align*} 
		where the inequality follows because~$\pi \in \Pi_\preceq ([\mu]_\calF,[\nu]_\calG)$. Because of the constraint $(f,g)\in \overline{\calC}$ we have the inequality in other direction, 
		\begin{equation*}
		\int \langle x, y\rangle \ud \pi(x,y) \leq \int (f(x)+g(y)) \ud \pi(x,y)
		\end{equation*}
		Therefore, 
		\begin{align*}
		\int \langle x, y\rangle \ud \pi(x,y) = \int f(x)\ud \mu(x) + \int g(y) \ud \nu(y) = \int (f(x)+g(y)) \ud \pi(x,y)
		\end{align*} 
		and
		\begin{equation*}
		f(x) + g(y)=\langle x, y\rangle,\quad \forall(x,y)\in \op{supp}(\pi)
		\end{equation*}
		Because of the constraint $(f,g)\in \overline{\calC}$ we have $f(x)\geq \sup_{y} (\langle x, y\rangle - g(y))=g^\star(x)$. Therefore, $f(x)=g^\star(x)$ for all $x \in \op{supp}(\pi_x)$. Similarly, $g(y)=f^\star(y)$ for all $y \in \op{supp}(\pi_y)$. Finally the inequality $f(x)\geq f^{\star\star}(x)\geq g^\star(x)$ and the equality $f(x)=g^\star(x)$ for all $x \in\op{supp}(\pi_x)$ imply $f(x)=f^{\star\star}(x)=g^\star(x)$ for all $x \in \op{supp}(\pi_x)$. Similarly $ g(y)=g^{\star\star}(y)=f^\star(y)$ for all $y \in \op{supp}(\pi_y)$. Therefore
		\begin{equation*}
		f^{\star\star}(x) + g^{\star\star}(y)=\langle x, y\rangle,\quad \forall(x,y)\in \op{supp}(\pi)
		\end{equation*}
		It follows that $y \in \partial f^{\star\star}(x)$ and $x \in \partial g^{\star\star}(y)$ for all $(x,y)\in \op{supp}(\pi)$.	
		\item Let $(f,g)$ be an optimal pair. Replace it with $(f^{\star\star},g^{\star\star})$. It is still admissible. Because, for any admissible pair $f(x)\geq \sup_{y}(\langle x, y\rangle - g(y))=g^\star(x)$. Therefore, $f^{\star\star}(x)\geq g^\star(x)$ because $f^{\star\star}(x)$ is the largest convex function below $f(x)$ and $g^\star(x)$ is convex. Therefore, $(f^{\star\star},g)$ is admissible. Similarly, $g(y)\geq f^{\star\star\star}(y)=f^{\star}(y)$ implies $g^{\star\star}(y)\geq f^\star(y)$. Therefore, $(f^{\star\star},g^{\star\star})$ is admissible. They attain a smaller value compared to $(f,g)$ because $f^{\star\star}(x)\leq f(x)$ and $g^{\star\star}(y)\leq g(y)$. 	Therefore, the optimal pair should be of the form $(f^{\star\star},g^{\star\star})$. Hence they are convex.
		\item This is a special case of part (i) and (ii). Because $\calF^{\star\star}\subset \calF$ and $\calG= \calG^{\star\star}=C_b(\calY)$, then the optimal pair is convex, and because $f^\star \in \calG$, the optimal pair is of the form $(f,f^\star)$ 
		Also because $(f,f^\star)$ are bounded on the compact set $\calX$, they are differentiable almost everywhere. 
		The constraint $\pi \in \Pi_{\preceq}([\mu]_\calF,[\nu]_\calG)$ and $\calG = C_b(\calX)$ imply that the marginal $\pi_y = \nu$. The condition for the other marginal $[\pi_x]_\calF \leq [\nu]_\calF$ imply
		\begin{align*}
		\int \tilde{f}(\nabla f^\star(y))\ud \nu(y)=\int \tilde{f}(x)\ud \pi(x,y)=\int \tilde{f}(x)\ud \pi_x(x) \leq \int \tilde{f}(x)\ud \mu(x),\quad \forall f \in \calF
		\end{align*}
	\end{enumerate}
\end{proof}

\subsection{Special Case: Conic Subsets of the Set of Convex Potentials}
The objective of this section is to build a connection from the analysis of duality under general restrictions that was studied so far in~\autoref{sec:restr-orig} to the computational framework proposed in the paper. In order to do so, we consider the optimal transportation problem with quadratic cost as in~\autoref{apdx:conic-qaudratic}. We let $\calF$ to be a subset of convex functions on $\calX$ that also forms a convex cone. And we choose $\calG = C_b(\calY)$ to be the set of bounded continuous functions. In this special setting,~\autoref{thm:duality}-(ii) applies and and we may express the dual problem~\eqref{eq:dual-quad-constrained} as
\begin{equation*}
\inf_{(f,g) \in \bar{\calC}\cap \calK}~ \int f(x)\ud \mu(x) + \int g(y)\ud \nu(y) = \inf_{f \in \calF}~ \int f(x)\ud \mu(x) + \int f^\star(y)\ud \nu(y)
\end{equation*}
This is important from a computational standpoint, as satisfying the constraint $\bar{\calC}$ in general is challenging. 

Also, in this setting, the primal problem is equivalent to:
\begin{align*}
\sup_{\pi \in \Pi^\calK_\preceq(\mu,\nu)} \int \langle x,y \rangle\ud \pi(x,y)&= \sup_{\lambda \preceq_{\calF}\mu } ~\sup_{\pi \in \Pi(\lambda,\nu)} \int \langle x,y \rangle \ud \pi(x,y)
\end{align*}
where we used $ \Pi_\preceq^\calK(\mu,\nu) =\bigcup \bigl\{ \Pi(\tilde{\mu},\tilde{\nu}) :~ \tilde{\mu}\preceq_\calF \mu,\,\tilde{\nu}\preceq_{C_b(\calY)} \nu \bigr\}$ from \autoref{lem:Pi-union} and $\tilde{\nu}\preceq_{C_b(\calY)} \nu ~\Leftrightarrow~\tilde{\nu} = \nu$. Then by strong duality form~\autoref{thm:restricted-duality}
\begin{align*}
\inf_{f \in \calF }~\int f(x) \ud \mu(x) + \int f^\star (y)\ud \nu(y)= \sup_{\lambda \preceq_{\calF}\mu } ~\sup_{\pi \in \Pi(\lambda,\nu)} \int \langle x,y \rangle \ud \pi(x,y)
\end{align*} 
This is the basic result in this setting, that leads to strong conclusions about the properties of the approximate metric as summarized in~\autoref{prop:conic-all}. 

\subsection{Proof of Theorem \ref{prop:conic-all}}\label{app:proof-prop:conic-all}
\begin{enumerate}
	\item Consider the primal and dual problem~\eqref{eq:primal-quad-constrained}-\eqref{eq:dual-quad-constrained} with $\calK = \calF \times C_b(\calX)$ where $\calX$ is the support of $\nu$ which is compact by \autoref{assump}. Then the primal problem is equivalent to:
		\begin{align*}
	\sup_{\pi \in \Pi^\calK_\preceq(\mu,\nu)} \int \langle x,y \rangle\ud \pi(x,y)&= \sup_{\lambda \preceq_{\calF}\mu } ~\sup_{\pi \in \Pi(\lambda,\nu)} \int \langle x,y \rangle \ud \pi(x,y)
	\end{align*}
	where we used \autoref{lem:Pi-union}. The dual problem is equivalent to
	\begin{align*}
\inf_{(f,g) \in \bar{\calC} \cap \calK}~\int f(x) \ud \mu(x) + \int g (y)\ud \nu(y) &= \inf_{f \in \calF }~\int f(x) \ud \mu(x) + \int f^\star (y)\ud \nu(y)
\end{align*}
where we used the fact that optimal $g$ is equal to $f^\star$ according to~\autoref{thm:Brenier-restricted}-(ii). Then by duality form~\autoref{thm:restricted-duality}
\begin{align*}
\inf_{f \in \calF }~\int f(x) \ud \mu(x) + \int f^\star (y)\ud \nu(y)= \sup_{\lambda \preceq_{\calF}\mu } ~\sup_{\pi \in \Pi(\lambda,\nu)} \int \langle x,y \rangle \ud \pi(x,y)
\end{align*} 
Multiplying both sides by $-1$ and adding $ \frac{1}{2}\int \norm{x}_2^2 \ud \mu(x) + \frac{1}{2}\int \norm{y}_2^2 \ud \nu(y) $ yields:
\begin{align*}
 \W_{2,\calF}^2(\mu,\nu) &= \inf_{\lambda \preceq_{\calF}\mu } ~\inf_{\pi \in \Pi(\lambda,\nu)}\left(\frac{1}{2}\int \norm{x}_2^2 \ud \mu(x) + \frac{1}{2}\int \norm{y}_2^2 \ud \nu(y) - \int \langle x,y \rangle \ud \pi(x,y) \right)\\
 &=\inf_{\lambda \preceq_{\calF}\mu } ~\inf_{\pi \in \Pi(\lambda,\nu)}\left(\frac{1}{2}\int \norm{x}_2^2 \ud \mu(x) - \frac{1}{2}\int \norm{x}_2^2 \ud \lambda (y) + \frac{1}{2}\int \norm{x-y}_2^2\ud \pi(x,y) \right)\\
& = \inf_{\lambda \preceq_{\calF}\mu } \left(\frac{1}{2}\int \norm{x}_2^2 \ud \mu(x) - \frac{1}{2}\int \norm{x}_2^2 \ud \lambda (x) + \W^2_2(\lambda,\nu) \right)
\end{align*} 
\item Suppose the right-hand side is true. Then the left-hand side follows from~\eqref{eq:W2F-duality} by choosing $\lambda = \nu$. Now assume the left-hand side is true. Then according to~\eqref{eq:W2F-duality} 
 \begin{equation*}
 \inf_{\lambda \preceq_{\calF}\mu } \left[\left(\frac{1}{2}\int \norm{x}_2^2 \ud \mu(x) - \frac{1}{2}\int \norm{x}_2^2 \ud \lambda (x)\right) + \W^2_2(\lambda,\nu) \right] = 0
 \end{equation*}
Both terms are non-negative. Therefore, it follows that both should be zero at optimality. Therefore, $\exists \lambda \preceq_\calF \mu$ such that $\W_2(\lambda,\nu)=0$. Hence $\lambda = \nu$. As a result $\nu \preceq_\calF \mu$ and $\frac{1}{2}\int \norm{x}_2^2 \ud \mu(x) = \frac{1}{2}\int \norm{x}_2^2 \ud \nu (x)$.

\item By definition, for all $\lambda \in \nabla \calF \# \mu$ there exists $f \in \calF$ and a measurable map $T$ such that $T(x) \in \partial f(x)$ and $\lambda = T\#\mu$. Then consider the joint distribution $\ud \pi(x,y) = \ud \mu(x)\delta_{y=T(x)}$. The marginals of $\pi$ are equal to $\mu$ and $\lambda$. Also for all $(x,y) \in \text{supp}(\pi)$ we have $y=T(x)\in \partial f(x)$. 
Therefore, by~\autoref{thm:Brenier}, $\pi$ is the optimal coupling between $\mu$ and $\lambda$ and $f$ is the optimal potential function that minimizes the dual problem. Because $f \in \calF$, the restriction to $\calF$ does not change the value of the not-restricted dual problem. Therefore, 
\begin{equation}\label{W2F-W2-identity}
\W_2(\mu,\lambda) = \W_{2,\calF}(\mu,\lambda)
\end{equation}
for all $\lambda \in \nabla \calF \# \mu$. Hence for all $\lambda \in \nabla \calF \# \mu$,
\begin{align*}
\W_2(\mu,\nu) &\leq \W_2(\mu,\lambda) + \W_2(\lambda,\nu) \\
&= \W_{2,\calF}(\mu,\lambda) + \W_2(\lambda,\nu)\\
&= \inf_{\tilde{\mu} \preceq_\calF \mu} ~\left[\W_2^2(\tilde{\mu},\lambda) + \int \frac{1}{2}\norm{x}_2^2 \ud \mu(x) -\int \frac{1}{2}\norm{x}_2^2 \ud \tilde{\mu}(x) \right]^{1/2} + \W_2(\lambda,\nu)\\
&\leq \inf_{\tilde{\mu} \preceq_\calF \mu} ~\left[2\W_2^2(\tilde{\mu},\nu) + 2\W_2^2(\lambda,\nu)+ \int \frac{1}{2}\norm{x}_2^2 \ud \mu(x) -\int \frac{1}{2}\norm{x}_2^2 \ud \tilde{\mu}(x) \right]^{1/2} + \W_2(\lambda,\nu)\\
&\leq ~\left[2\W^2_{2,\calF}({\mu},\nu) + 2\W_2^2(\lambda,\nu) \right]^{1/2} + \W_2^2(\lambda,\nu)
\end{align*}
where in the second line we used\eqref{W2F-W2-identity}, and on the third and last line we used~\eqref{eq:W2F-duality}. Letting $\lambda = \text{Proj}(\nu,\nabla \calF \# \mu)$ concludes the result. 
\end{enumerate}

\subsection{What's Next? Restricting the Reduced Dual Form}\label{sec:bridge}

The bulk of \autoref{sec:restr-orig} is concerned with duality in infinite-dimensional linear programming and the implications for optimal transports. In other words, we are restricting the original dual Kantorovich problem \eqref{eq:dual-form} to a cone $\calF$ to get \eqref{eq:dual-constrained} or \eqref{eq:dual-quad-constrained}. However, these are {\em infinite-dimensional constrained optimization problems} and are hard to solve in practice. Then, according to \autoref{thm:Brenier-restricted}, we know that the optimal solution pair $(f,g)$ are conjugate to each other and $f$ lies in $\calF \cap \cvx(\calX)$. The first statement allows for turning the problem into an {\em equivalent} {\em unconstrained} form which opens the door to many more optimization algorithms; this helps us improve on the computational aspect. Note that regularization-based approaches in optimal transport also turn the problem into an unconstrained form but they do so in an inexact way which introduces {\em bias}; see \autoref{sec:prior}. The second implication of \autoref{thm:Brenier-restricted} suggests that we can optimize over $\calF \cap \cvx(\calX) \subseteq \cvx(\calX)$. However, given $\calF$ it is in general not easy to have a computational characterization for $\calF \cap \cvx(\calX)$. Therefore, the intersection may not be expressive enough for our purposes; e.g., while the class of polynomials of certain degree is big enough for many purposes, the subset of such polynomials that are convex is much smaller. Therefore, we propose to consider restriction to sets $\calF \subseteq \cvx(\calX)$ from the beginning (\autoref{sec:restr-cvx}). This way, we have a control on the possible optimal solutions, hence on the overall behavior of the optimal value function, i.e., the distance. This is how we get a better handle on the generalization (statistical) aspects. Some of such sets will be cones (as in \autoref{sec:restr-finite}, some cases in \autoref{sec:restr-quad}), and some will not (as in some cases in \autoref{sec:restr-quad}, \autoref{sec:restr-PLQ}, \autoref{sec:restr-ICNN}). For the former cases we can use the general results of \autoref{sec:restr-orig}. However, for the latter cases we resort to a case by case analysis besides the unified results in \autoref{sec:restr-cvx}.

\section{Further Results for Some Parametrized Subsets of Convex Functions}\label{sec:proposed-restr}

In this section, we consider several class of convex functions and study their theoretical properties in the context of~\autoref{sec:main-results}. Hand-picking the restriction class allows for adapting to the requirement of the problem at hand. For example, we may be interested in learning a probability distribution that only matches certain moments of the underlying distribution from which we have samples. In such case, using an appropriate approximate metric allows for convergence with fewer samples and at a lower computational cost.

\subsection{A Finitely Generated Set of Convex Functions}\label{sec:restr-finite}
Consider $f_0:x\mapsto \frac{1}{2}\norm{x}_2^2$ as well as $M$ closed convex functions $f_1, \ldots, f_M \in \cvx(\real^d)$. Define 
\[
\calF_0 = \bigl\{f_0, f_1, \ldots, f_M \bigr\}\subset \cvx(\real^n) \quad,\quad \calF = \op{cone}(\calF_0)
\]
where $\calF$ is the convex conic hull of $\calF_0$. Given a measure $\mu$ define the $M$-dimensional vector of its moments with respect to~$\calF_0$ as $m_{\calF_0}(\mu) \coloneqq \begin{bmatrix} m_{f_0}(\mu) , m_{f_1}(\mu) , \cdots , m_{f_M}(\mu) \end{bmatrix} \in\real^{M}$ where $m_{f}(\mu)\coloneqq \int f(x) \ud \mu(x)$. 

\paragraph{Moment-matching.}
According to the part-(ii) of the \autoref{prop:conic-all}, we have
\begin{equation*}
\W_{2,\calF}(\mu,\nu) = 0\quad \iff\quad 
\mu \succeq_{\text{cone}(\calF_0)} \nu \quad \iff\quad 
\mu \succeq_{\calF_0} \nu \quad \iff\quad 
m_{\calF_0}(\mu) \geq m_{\calF_0}(\nu) 
\end{equation*}
where the last inequality is entry-wise.

\paragraph{Approximability.} 
Let $X$ be a random variable whose probability distribution is equal to $\mu$. Then, $\nabla \calF \# \mu$ consists of all distributions corresponding to random variables which belong to the set 
\[\nabla \calF (X) = \left\{Y = \sum_{m=0}^M \alpha_m\nabla f_m(X):~\alpha_m \geq 0,~m \in [M]\right\}.\] As a result, the approximate metric between $X$ and any $Y \in \nabla \calF (X)$ is exact.

\paragraph{Transport map.}
For $\theta \in \real^M_+$, let $f(x;\theta) = \sum_{m=1}^M \theta_m f_m(x)$. Then, $\frac{\partial f}{\partial \theta_m} (x;\theta)= f_m(x)$ for all $m\in [M]$. Therefore, the tangent space as defined in~\eqref{eq:Tan-space} is equal to 
\begin{equation*}
\Tan_\theta \calF = \text{span}(\calF_0).
\end{equation*} 
Now consider the approximate optimal transport map $T_{\calF}(x)=\nabla f^\star(x;\bar{\theta})$ as defined in~\eqref{eq:approx-opt-map}. Then, as a result of~\autoref{prop:approximate-map}, if $\bar{\theta}$ belongs to the interior of $\real^M_{+}$ (i.e., its components are strictly positive), we have $\int g(x)\ud \mu(x) = \int g(T_{\calF}(y))\ud \nu(y)$ for all $g \in \Tan_{\bar{\theta}}\calF = \text{span}(\calF_0)$. This implies \[m_{\calF_0}(\mu)=m_{\calF_0}(T_\calF \# \nu).\]

\subsection{Convex Quadratic Functions}\label{sec:restr-quad}
Consider subsets of the set of convex quadratic functions parametrized as
\begin{equation*}
\Q(\Theta)\coloneqq \bigl\{f:x\mapsto \frac{1}{2}x^\top A x + b^\top x;~(A,b)\in \Theta \bigr\} \subset \cvx(\mathbb{R}^n)
\end{equation*}
where $\Theta \subseteq \mathbb{S}^n_{++}\times \real^n$. For any quadratic function $f \in \Q(\Theta)$, namely $f:x\mapsto \frac{1}{2}x^\top A x + b^\top x$ for $\theta=(A,b)\in\Theta$, the convex conjugate can be expressed as $f^\star: y\mapsto \frac{1}{2}(y-b)^\top A^{-1}(y-b)$. We will use this restriction class to illustrate the theoretical results in~\autoref{sec:main-results}. The class of quadratic functions is also studied in the context of GAN by \citet{feizi2017understanding}.

\paragraph{Moment-matching.}
From the above, with $\theta=(A,b)$, we have 
$\frac{\partial f}{\partial A_{ij}}(x;\theta)=x_ix_j$ and $\frac{\partial f}{\partial b_{i}}(x;\theta)=x_i$ for all $i,j\in[n]$. 
Therefore, the tangent space defined in \eqref{eq:Tan-space} is given by 
\begin{equation}\label{eq:quad-T}
\Tan_{\theta} \calF = \op{span}\{x_ix_j:~ i,j=0,1,\ldots,n\} \supset \Q(\Theta)
\end{equation}
for all $\theta \in \Theta$, where we define $x_0 = 1$. Note that, in this special case, the tangent space does not depend on $\theta$. 

We have $f(x;\theta_0) = \frac{1}{2}\norm{x}_2^2$ if and only if $\theta_0=(I_{d\times d},0_{d\times 1})$. Therefore, the set $\Theta_0=\{\theta_0\}$ is non-empty. Then, according to \autoref{prop:W2F-metric}, if $\theta_0$ is in the interior of $\Theta$ the moment matching property is satisfied for all functions that belong to the tangent space $\Tan_{\theta_0}\calF$ (given in \eqref{eq:quad-T}). Hence, if the approximate metric, defined with respect to the class of convex quadratic functions, is zero, then the first and second moments are equal.

\paragraph{Approximability.}
For $f$ in $\Q(\Theta)$, we have $\nabla f(x) = Ax + b$. This is an affine transformation. Therefore, according to the \autoref{prop:W2F-approximability}, this convex quadratic function class can exactly approximate the $\W_2$ distance between any two distributions that are related to each other with an affine transformation.

\paragraph{Metric properties.} Consider the case $\Theta =S^n_{++} \times \real^n $. In this case, $\mathcal{Q}(\Theta)$ forms a convex cone. Therefore, one can prove strong results about the metric properties of the approximate metric. 
Define the map $G:\mathcal{P}_{2,+}(\real^n) \to \mathcal{P}_{2,+}(\real^n)$ such that it takes a probability distribution $\mu$ and outputs a Gaussian distribution with the same mean and covariance, 
\begin{equation*}
G(\mu) = \mathcal{N}( m_1(\mu), m_2(\mu) ).
\end{equation*}

\begin{proposition}\label{prop:Quad-metric}
	Consider $\Theta = \mathbb{S}^n_{++}\times \real^n$. 
	Consider the $L^2$-Wasserstein distance restricted to the class of all convex quadratic functions. Then,
	\begin{enumerate}
		\item For all $\mu,\nu \in \mathcal{P}_{2,+}(\real^n)$ 
		\begin{equation}\label{eq:W2-quad-identity}
		\W_{2,\Q(\Theta)}(\mu,\nu) = \W_{2}(G(\mu),G(\nu))
		\end{equation}
		\item $\W_{2,\Q(\Theta)}$ is a pseudo-metric on the space of probability distributions~$\mathcal{P}_{2,+}(\real^n)$. 
		\item $\W_{2,\Q(\Theta)}$ is a metric on the space of Gaussian distributions with a positive definite covariance matrix.
	\end{enumerate}
\end{proposition}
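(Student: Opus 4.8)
The content is in part~1, the identity $\W_{2,\Q(\Theta)}(\mu,\nu)=\W_2(G(\mu),G(\nu))$; parts~2 and~3 follow from it formally. The plan for part~1 is to evaluate the inner optimization defining $\W_{2,\Q(\Theta)}^2$ in closed form. For $f(x;\theta)=\tfrac12 x^\top A x+b^\top x$ with $\theta=(A,b)\in\mathbb{S}^n_{++}\times\real^n$ one has the explicit conjugate $f^\star(y;\theta)=\tfrac12(y-b)^\top A^{-1}(y-b)$, so $\tilde J_{\mu,\nu}(\theta)=\int f(\cdot;\theta)\ud\mu+\int f^\star(\cdot;\theta)\ud\nu$ is a smooth function of $(A,b)$ depending on $\mu,\nu$ only through their means $m_1(\mu),m_1(\nu)$ and covariances $m_2(\mu),m_2(\nu)$. (Alternatively one could start from the conic-duality identity~\eqref{eq:W2F-duality} together with the fact that $\lambda\preceq_{\Q(\Theta)}\mu$ holds iff $\lambda$ and $\mu$ have equal means and $\operatorname{cov}(\lambda)\preceq\operatorname{cov}(\mu)$ in the Loewner order; but the direct computation below is self-contained and also yields the optimizing quadratic, hence the approximate map.)

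\textbf{Part 1.} Writing $m_\mu=m_1(\mu)$, $\Sigma_\mu=m_2(\mu)$, and likewise for $\nu$, a direct expansion gives
\[
\tilde J_{\mu,\nu}(A,b)=\tfrac12\trace(A\Sigma_\mu)+\tfrac12 m_\mu^\top A m_\mu+b^\top m_\mu+\tfrac12\trace(A^{-1}\Sigma_\nu)+\tfrac12(m_\nu-b)^\top A^{-1}(m_\nu-b).
\]
First minimize over $b$: the first-order condition gives $b=m_\nu-A m_\mu$, and substituting back collapses the $b$- and mean-quadratic terms to $m_\mu^\top m_\nu$, leaving $\inf_b\tilde J_{\mu,\nu}(A,b)=\tfrac12\trace(A\Sigma_\mu)+\tfrac12\trace(A^{-1}\Sigma_\nu)+m_\mu^\top m_\nu$. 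Next minimize over $A\succ0$: the substitution $B=\Sigma_\mu^{1/2}A\Sigma_\mu^{1/2}$ (legitimate since $\Sigma_\mu\succ0$ as $\mu\in\mathcal{P}_{2,+}(\real^n)$) reduces this to $\min_{B\succ0}\tfrac12[\trace(B)+\trace(B^{-1}C)]$ with $C=\Sigma_\mu^{1/2}\Sigma_\nu\Sigma_\mu^{1/2}$, whose minimizer is $B=C^{1/2}$ with value $\trace(C^{1/2})=\trace\bigl((\Sigma_\mu^{1/2}\Sigma_\nu\Sigma_\mu^{1/2})^{1/2}\bigr)$; the corresponding $A$ lies in $\mathbb{S}^n_{++}$ because $\Sigma_\nu\succ0$, so the infimum is attained in $\Theta$. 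Plugging into~\eqref{eq:W2F}, using $\int\tfrac12\norm{x}_2^2\ud\mu=\tfrac12\trace(\Sigma_\mu)+\tfrac12\norm{m_\mu}_2^2$ and $\tfrac12\norm{m_\mu}_2^2+\tfrac12\norm{m_\nu}_2^2-m_\mu^\top m_\nu=\tfrac12\norm{m_\mu-m_\nu}_2^2$, gives
\[
\W_{2,\Q(\Theta)}^2(\mu,\nu)=\tfrac12\norm{m_\mu-m_\nu}_2^2+\tfrac12\trace(\Sigma_\mu+\Sigma_\nu)-\trace\bigl((\Sigma_\mu^{1/2}\Sigma_\nu\Sigma_\mu^{1/2})^{1/2}\bigr),
\]
which is exactly the classical closed form for $\W_2^2$ between $\mathcal N(m_\mu,\Sigma_\mu)$ and $\mathcal N(m_\nu,\Sigma_\nu)$ under the cost $\tfrac12\norm{\cdot}_2^2$; that is, $\W_{2,\Q(\Theta)}(\mu,\nu)=\W_2(G(\mu),G(\nu))$.

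\textbf{Parts 2 and 3.} Part~1 states that $\W_{2,\Q(\Theta)}=\W_2\circ(G\times G)$, i.e.\ $\W_{2,\Q(\Theta)}$ is the pullback under $G$ of the genuine metric $\W_2$ (restricted to the image $G(\mathcal{P}_{2,+}(\real^n))$, a set of nondegenerate Gaussians with finite second moments). The pullback of a metric under any map is always a pseudo-metric: nonnegativity, symmetry and the triangle inequality applied to $G(\mu),G(\lambda),G(\nu)$ are inherited verbatim, $\W_{2,\Q(\Theta)}(\mu,\mu)=\W_2(G(\mu),G(\mu))=0$, and finiteness holds since $G(\mu),G(\nu)\in\mathcal{P}_2(\real^n)$; this proves part~2. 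It is only a pseudo-metric because $G$ is not injective ($\W_{2,\Q(\Theta)}(\mu,\nu)=0$ iff $\mu$ and $\nu$ share mean and covariance). For part~3, on the set of Gaussians with positive-definite covariance the map $G$ is the identity, so there $\W_{2,\Q(\Theta)}$ coincides with $\W_2$, hence is a metric; in particular it separates points, since such a Gaussian is determined by its mean and covariance.

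\textbf{Expected main obstacle.} The argument is largely bookkeeping; the one step needing care is the matrix optimization $\min_{A\succ0}[\tfrac12\trace(A\Sigma_\mu)+\tfrac12\trace(A^{-1}\Sigma_\nu)]$ — carrying out the $B=\Sigma_\mu^{1/2}A\Sigma_\mu^{1/2}$ substitution, identifying the optimal value $\trace\bigl((\Sigma_\mu^{1/2}\Sigma_\nu\Sigma_\mu^{1/2})^{1/2}\bigr)$ with the Bures term, and observing that attainment of the infimum (so that the minimizing quadratic genuinely lies in $\Q(\Theta)$) is precisely what the hypothesis $\mu,\nu\in\mathcal{P}_{2,+}(\real^n)$ guarantees. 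One must also invoke the closed-form expression for $\W_2$ between Gaussians with the $\tfrac12$-normalization of the cost used throughout the paper.
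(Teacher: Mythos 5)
Your proof is correct, but for the key identity in part~1 you take a genuinely different route from the paper. The paper's argument is structural: it observes that $\tilde J_{\mu,\nu}(\theta)=\tilde J_{G(\mu),G(\nu)}(\theta)$ for every $\theta$ (the objective depends on $\mu,\nu$ only through their first two moments), so the restricted infima coincide, and then invokes the fact that the Brenier potential between two nondegenerate Gaussians is an affine map's antiderivative, i.e.\ a convex quadratic, so that restricting to $\Q(\Theta)$ does not change the value of the exact problem for the pair $(G(\mu),G(\nu))$; hence the restricted value equals $\W_2(G(\mu),G(\nu))$. You instead compute $\inf_{(A,b)}\tilde J_{\mu,\nu}(A,b)$ in closed form and match the result against the Bures formula. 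Both are sound and both ultimately lean on the same piece of external knowledge (the explicit solution of Gaussian optimal transport), just at different points: the paper uses it to assert that the unrestricted optimizer is quadratic, you use it to recognize the closed-form value. Your computation buys more — it exhibits the optimizer $\bar A=\Sigma_\mu^{-1/2}(\Sigma_\mu^{1/2}\Sigma_\nu\Sigma_\mu^{1/2})^{1/2}\Sigma_\mu^{-1/2}$, $\bar b=m_\nu-\bar A m_\mu$ and shows attainment inside $\mathbb{S}^n_{++}\times\real^n$, which the paper defers to its separate landscape result (\autoref{prop:Quad-landscape}); the paper's argument is shorter and makes the ``only the first two moments matter'' mechanism transparent. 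One small point to tighten in your write-up: when you minimize $\tfrac12\trace(B)+\tfrac12\trace(B^{-1}C)$ over $B\succ 0$, you should note that $B\mapsto\trace(B^{-1}C)$ is convex on $\mathbb{S}^n_{++}$ (or give the matrix AM--GM inequality $\trace(B)+\trace(B^{-1}C)\geq 2\trace(C^{1/2})$ directly), so that the stationary point $B=C^{1/2}$ is in fact the global minimizer rather than merely a critical point. Parts~2 and~3 of your argument coincide with the paper's.
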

\begin{proof}
	\begin{enumerate}
		\item 
		In the case of quadratic functions, it is easy to see that $\tilde J_{\mu,\nu}(\theta) = \tilde J_{G(\mu),G(\nu)}(\theta)$ for all $\theta \in \Theta$; 
		the value of the objective function does not change if one replaces $\mu$ and $\nu$ with other distributions with the same mean and covariance, because the value depends only on the mean and the covariance. Therefore, 
		\begin{equation*}
		\inf_{\theta\in \Theta}~ \tilde J_{\mu,\nu}(f) = \inf_{\theta\in\Theta}~\tilde J_{G(\mu),G(\nu)}(f).
		\end{equation*}
		Since any two Gaussian distributions can be mapped to each other using an affine transformation, an optimal pair of functions in computing $\W_{2}(G(\mu),G(\nu))$ is going to be a quadratic function. Therefore, the righ-hand side in the above corresponds to $\W_{2}(G(\mu),G(\nu))$. 
		This establishes the claim. 
		
		\item From the identity~\eqref{eq:W2-quad-identity}, one can easily conclude the three properties of the pseudo-metric: $\forall \mu,\nu,\lambda \in \mathcal{P}_2(\real^n)$ 
		\begin{align*}
		\text{(i)}\quad \W_{2,\Q}(\mu,\mu)&=\W_2(G(\mu),G(\mu))= 0\\
		\text{(ii)}\quad \W_{2,\Q}(\mu,\nu) &= \W_2(G(\mu),G(\nu)) = \W_2(G(\nu),G(\mu))=\W_{2,\Q}(\nu,\mu)\\
		\text{(iii)}\quad \W_{2,\Q}(\mu,\nu) &= \W_2(G(\mu),G(\nu)) \leq \W_2(G(\mu),G(\lambda)) + \W_2(G(\lambda),G(\nu)) \\&= \W_{2,\Q}(\mu,\lambda) + \W_{2,\Q}(\lambda,\nu)
		\end{align*}
		\item On the space of Gaussian probability distributions, the map $G$ is an identity map. Hence for all Gaussian distributions $0 = \W_{2,\Q(\Theta)}(\mu,\nu) = \W_2(\mu,\nu)$ implies $\mu=\nu$. This, together with the pseudo-metric property, establishes the claim. 
		
	\end{enumerate}
\end{proof}

\paragraph{Transport map.}
Observe that $\nabla_A f(x;A,b) = \frac{1}{2}x x^\top$ and $\nabla_b f(x;A,b) = x$. As a result, according to \autoref{prop:approximate-map}, and the independence of $\Tan_\theta \calF$ from $\theta$ (discussed in the beginning of this secgtion), the transport map matches the means and the covariances, namely 
\begin{align*}
\int x x^\top \ud \mu(x) = \int T_\calF(y)T_\calF(y)^\top \ud \nu(y) \quad,\quad 
\int x \ud \mu(x) = \int T_\calF(y)\ud \nu(y)
\end{align*}

\paragraph{The derivative.} 
The objective function $\tilde{J}$ defined in~\eqref{eq:min-theta} evaluted for the class of convex quadratic functions is given by 
\begin{align}\label{eq:J-quad}
\tilde{J}_{\mu,\nu}(A,b) = \int (\frac{1}{2}xAx^\top +b^\top x)\ud \mu(x) + \int \frac{1}{2}(y-b)^\top A^{-1}(y-b)\ud \nu(y).
\end{align}
Then the derivatives with respect to $A$ and $b$ are given by:
\begin{align*}
\nabla_A \tilde{J}_{\mu,\nu}(A,b) &= \frac{1}{2} \int x x^\top \ud \mu(x) - \frac{1}{2}\int A^{-1}(y-b)(y-b)^\top A^{-1} \ud \nu(y)\\
\nabla_b \tilde{J}_{\mu,\nu}(A,b) &= \int x \ud \mu(x) - \int A^{-1}(y-b) \ud \nu(y)
\end{align*}
The same result can be seen from \autoref{prop:J-derivative}.

\paragraph{Optimization landscape.} 
In this special setting, one can analyze the optimization landscape of the optimization problem 
\begin{equation}\label{eq:opt-problem-quadratic}
\inf_{(A,b) \in \Theta}~\int f(x;A,b)\ud \mu(x) + \int f^\star(y;A,b)\ud \nu(y).
\end{equation}
Let $\tilde{J}_{\mu,\nu}(A,b)$ denote the value of the objective function. 
Understanding the landscape for optimization problem helps in devising appropriate algorithms for computing the approximations. 

\begin{proposition} \label{prop:Quad-landscape}
	Consider $\mu,\nu \in \mathcal{P}_{2,+}(\real^n)$ and $X\sim \mu, Y\sim \nu$. 
	Consider the optimization problem~\eqref{eq:opt-problem-quadratic}. 
	The objective function $\tilde{J}_{\mu,\nu}$ is convex in $(A,b)$ on the domain $\Theta=\mathbb{S}^n_{++} \times \real^n$. There is a unique minimizer $(\bar{A},\bar{b})$ given by
	\begin{equation*}
	\bar{A} = \Sigma_X^{-\frac{1}{2}}\left(\Sigma_X^{\frac{1}{2}}\Sigma_Y\Sigma_X^{\frac{1}{2}}\right)^{\frac{1}{2}}\Sigma_X^{-\frac{1}{2}},\quad \bar{b} = m_Y - \bar{A}m_X,\quad c \in \real
	\end{equation*}	
	and the optimal value is
	\begin{equation*}
	\min_{A,b}~\tilde{J}_{\mu,\nu}(A,b)=m_X^\top m_Y + \trace((\Sigma_X^{1/2}\Sigma_Y\Sigma_X^{1/2})^{1/2}).
	\end{equation*}
\end{proposition}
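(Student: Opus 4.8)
The plan is to directly analyze the closed-form objective $\tilde J_{\mu,\nu}(A,b)$ given in~\eqref{eq:J-quad}. First I would substitute the explicit moments: writing $m_X=\int x\ud\mu$, $\Sigma_X=\int xx^\top\ud\mu$ (and likewise for $\nu$; note the paper uses uncentered second moments, but since $b$ is free the centering is immaterial to the argument and I would carry it through consistently), the objective becomes $\tfrac12\trace(A\Sigma_X)+b^\top m_X + \tfrac12\trace\!\big(A^{-1}\Sigma_Y'\big) - b^\top A^{-1}(\text{shift}) + \cdots$, where after optimizing out $b$ one is left with a function of $A$ alone. Concretely I would first minimize over $b$ for fixed $A\in\mathbb{S}^n_{++}$: the $b$-dependence is a strictly convex quadratic $\tfrac12 b^\top A^{-1} b - b^\top(A^{-1}m_Y - m_X)$ plus constants, whose minimizer is $\bar b = m_Y - A m_X$, matching the claimed formula. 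Plugging this back in collapses the objective to $\phi(A)\coloneqq \tfrac12\trace(A\Sigma_X) + \tfrac12\trace(A^{-1}\Sigma_Y) + (\text{terms affine in the data, constant in }A)$, up to the mean cross-term $m_X^\top m_Y$ that will reappear in the optimal value.

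Next I would establish convexity. The map $A\mapsto \trace(A\Sigma_X)$ is linear, hence convex; the map $A\mapsto \trace(A^{-1}\Sigma_Y)$ is convex on $\mathbb{S}^n_{++}$ because $A\mapsto A^{-1}$ is operator-convex and $\Sigma_Y\succeq 0$, or more elementarily because $\trace(A^{-1}\Sigma_Y) = \trace(\Sigma_Y^{1/2}A^{-1}\Sigma_Y^{1/2})$ is a sum of functions $u^\top A^{-1}u$ each of which is convex in $A$ (standard; e.g.~from the Schur-complement characterization of $\{(A,t): u^\top A^{-1}u\le t\}$). Adding the affine $b$-block back, joint convexity in $(A,b)$ follows since we showed the partial minimization over $b$ yields a convex function of $A$ and the full objective is jointly convex as a sum of a linear term, the term $\tfrac12\trace(A^{-1}\Sigma_Y)$ (convex in $A$), and $\tfrac12 b^\top A^{-1} b$ which is jointly convex in $(b,A)$ on $\real^n\times\mathbb{S}^n_{++}$ (quadratic-over-linear / Schur complement). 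Strict convexity of $\phi$ in $A$ then gives uniqueness of $\bar A$, and strict convexity of the $b$-quadratic gives uniqueness of $\bar b$ given $\bar A$.

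Then I would solve the stationarity equation for $A$. Setting $\nabla_A\phi(A)=0$ using $\nabla_A\trace(A\Sigma_X)=\Sigma_X$ and $\nabla_A\trace(A^{-1}\Sigma_Y) = -A^{-1}\Sigma_Y A^{-1}$ (which can also be read off the derivative formulas already displayed in the paper just before the proposition), one obtains $\Sigma_X = A^{-1}\Sigma_Y A^{-1}$, i.e.~$A\Sigma_X A = \Sigma_Y$. This is exactly the matrix equation whose unique positive-definite solution is $\bar A = \Sigma_X^{-1/2}(\Sigma_X^{1/2}\Sigma_Y\Sigma_X^{1/2})^{1/2}\Sigma_X^{-1/2}$ --- one verifies this by direct substitution, using that $\Sigma_X^{1/2}\bar A\Sigma_X^{1/2} = (\Sigma_X^{1/2}\Sigma_Y\Sigma_X^{1/2})^{1/2}$ so that $\Sigma_X^{1/2}(\bar A\Sigma_X\bar A)\Sigma_X^{1/2} = \big((\Sigma_X^{1/2}\Sigma_Y\Sigma_X^{1/2})^{1/2}\big)^2 = \Sigma_X^{1/2}\Sigma_Y\Sigma_X^{1/2}$. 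Finally, plugging $(\bar A,\bar b)$ back into the objective and simplifying gives $\min \tilde J = m_X^\top m_Y + \trace\big((\Sigma_X^{1/2}\Sigma_Y\Sigma_X^{1/2})^{1/2}\big)$, the computation being routine once one notes $\tfrac12\trace(\bar A\Sigma_X) + \tfrac12\trace(\bar A^{-1}\Sigma_Y) = \trace(\Sigma_X^{1/2}\bar A\Sigma_X^{1/2}) = \trace((\Sigma_X^{1/2}\Sigma_Y\Sigma_X^{1/2})^{1/2})$ using the stationarity relation. (As a sanity check, this is consistent with $\W_{2,\Q}(\mu,\nu)^2 = \W_2(G(\mu),G(\nu))^2$ from \autoref{prop:Quad-metric} and the Bures--Wasserstein formula between Gaussians.)

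The main obstacle I anticipate is not any single step but carefully handling the interaction between the $A$ and $b$ blocks --- in particular making the joint-convexity claim rigorous (the cleanest route is the partial-minimization argument plus the Schur-complement fact that $(b,A)\mapsto b^\top A^{-1}b$ is jointly convex), and being careful that the formula in the proposition is stated with uncentered second moments $\Sigma_X=\int xx^\top\ud\mu$ so that the "true" covariances $\Sigma_X - m_Xm_X^\top$ are what should appear in the Bures term; I would either reconcile this with a centering change of variables $x\mapsto x-m_X$ at the outset, or note the proposition implicitly takes centered data. Verifying that $\bar A$ solves $A\Sigma_X A=\Sigma_Y$ and the final value computation are then purely mechanical.
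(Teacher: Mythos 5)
Your proof is correct, and it diverges from the paper's in the second half. For convexity, you and the paper use essentially the same ingredients: the paper establishes joint convexity of $(A,b)\mapsto (y-b)^\top A^{-1}(y-b)$ via the Schur-complement characterization of its epigraph (and offers the sup-of-linear-functions representation through the conjugate as an alternative), which is the same quadratic-over-linear fact you invoke. Where you differ is in obtaining the minimizer and the optimal value: the paper simply states that ``the rest of the proof follows from \autoref{prop:Quad-metric} and the explicit formula of the optimal transport map for Gaussian distributions,'' i.e.\ it leans on the reduction $\W_{2,\Q}(\mu,\nu)=\W_2(G(\mu),G(\nu))$ and imports the known Bures--Wasserstein/Gaussian transport-map formulas, whereas you derive $(\bar A,\bar b)$ self-containedly by partial minimization over $b$, the stationarity equation $A\Sigma_X A=\Sigma_Y$, and direct verification of its positive-definite solution. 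Your route buys three things the paper leaves implicit: an explicit uniqueness argument (strict convexity of $A\mapsto\trace(A^{-1}\Sigma_Y)$ on $\mathbb{S}^n_{++}$ when $\Sigma_Y\succ 0$, which is exactly where the hypothesis $\nu\in\mathcal{P}_{2,+}$ is used), a clean evaluation of the optimal value via $\trace(\bar A^{-1}\Sigma_Y)=\trace(\bar A\Sigma_X)$ at stationarity, and an honest reconciliation of centered versus uncentered second moments --- the proposition's displayed value $m_X^\top m_Y+\trace((\Sigma_X^{1/2}\Sigma_Y\Sigma_X^{1/2})^{1/2})$ is only consistent with the Gaussian formula if $\Sigma_X,\Sigma_Y$ are the centered covariances, a point the paper does not address and that your centering change of variables handles. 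The paper's route is shorter and reuses already-established structure; yours is longer but fully elementary and checks the details the citation-based argument skips.
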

\begin{proof}
	The first two terms of \eqref{eq:J-quad} are convex because they are linear in $A$ and $b$. It remains to show that the last term is also convex. We show this by establishing the convexity of its epigraph. Note that for all $(y,t)\in\real^n\times \real$,
	\[
	(y-b)^\top A^{-1}(y-b) \leq t ~~\Leftrightarrow~~ \begin{bmatrix} A &(y-b)^\top\\y-b & t\end{bmatrix}\succeq 0 \text{ and $A$ is invertible}.
	\]
	Therefore, the epigraph $\{(A,b,t) \in \mathbb{S}^n_{++} \times \real^n \times \real;~(y-b)^\top A^{-1}(y-b) \leq t\}$ is convex as the following set is convex
	\begin{align*}
	\{(A,b,t) \in \mathbb{S}^n_{++} \times \real^n \times \real;~ \begin{bmatrix} A &(y-b)^\top\\y-b & t\end{bmatrix}\succeq 0\}
	\end{align*}
	which follows from convexity of the cone of positive semi-definite matrices.
	Alternatively, we can write the objective function as
	\begin{equation*}
	\begin{aligned}
	\tilde{J}_{\mu,\nu}(A,b)&= \int \left[\frac{1}{2}x^\top A x + b^\top x \right] \ud\mu(x)+ \int \sup_z\left( z^\top y - \frac{1}{2}z^\top A z - b^\top z \right) \ud\nu(y).
	\end{aligned}
	\end{equation*} 
	The function inside the supremum is linear in $b$ and $A$. The supremum of linear functions is convex. And the expectation of a convex functions is also convex. 
	
	The rest of the proof follows from \autoref{prop:Quad-metric} and explicit formula of optimal transport map for Gaussian distributions.
\end{proof}

\subsection{Piecewise-Linear-Quadratic Functions}\label{sec:restr-PLQ}
Consider a class of parameterized convex functions of the form \[\calF = \{f:x\mapsto \max_{m\in [M]} (\frac{1}{2}x^\top A_m x + b_m^\top x + c_m);~(A_m,b_m,c_m) \in S^d_{++} \times \real^d \times \real,~ m \in [M] \}.\] 
It is easy to see that these functions are piecewise-linear-quadratic. 
Define sets $S_m \coloneqq \{x \in \real^d;~(\frac{1}{2}x^\top A_m x + b_m^\top x + c_m) = \max_{n\in[M]}(\frac{1}{2}x^\top A_n x + b_n^\top x + c_n)\}$ as the subset of locations where the piece corresponding to the index $m$ attains the maximum. 
\paragraph{Approximability.}
{Let $X$ be a random variable whose probability distribution is equal to $\mu$. For any $x\in\real^d$, define 
\[
\nabla \calF (x)= \op{conv}\left\{y = A_{m}x + b_{m}:~ m\in \op{Argmax}_{m\in[M]}~(\frac{1}{2}x^\top A_m x + b_m^\top x + c_m)\right\}
\]
Then, $\nabla \calF \# \mu$ consists of all distributions corresponding to random variables which belong to the set $\nabla \calF (X)$. As a result, the approximate metric between $X$ and any $Y \in \nabla \calF (X)$ is exact.}

\subsection{Input-Convex Neural Networks}\label{sec:restr-ICNN}
Consider the class of convex functions \[\calF = \{f:x\mapsto w^\top \sigma^2(Ax+b) ;~ (w,A,b)\in \real^{2d}_+ \times \real^{2d\times d} \times \real^{2d}\}\] where $\sigma$ is the ReLU activation. Any function $f\in \calF$ is also expressed as $f(x;\theta) = \sum_{i=1}^{2d} w_i(a_i^\top x+b_i)_+^2$, where $(\alpha)_+= \max\{\alpha,0\}$ and $A^\top = [a_1, \cdots, a_{2d}]$. 

\paragraph{Moment-matching.}
	Observe that $f(x;\theta_0) = \frac{1}{2}\sum_{i=1}^d x_i^2$ for
	\begin{equation*}
	\theta_0=(w,A,b)=( \frac{1}{2} \one_{2d\times 1} ,\begin{bmatrix}
	I_{d\times d} \\ -I_{d\times d} 
	\end{bmatrix}
	,\zero_{2d\times 1}) \in \Theta_0.
	\end{equation*}
	Then, the tangent space $\Tan_{\theta_0}\calF$ is given by functions
	\begin{equation*}\begin{array}{lll}
	\frac{\partial f}{\partial w_i}(x;\theta_0) = x_i^2\mathds{1}_{x_i\geq 0}\,,&
	\frac{\partial f}{\partial b_i}(x;\theta_0) =x_i\mathds{1}_{x_i\geq 0}\,,&
	\frac{\partial f}{\partial A_{ij}}(x;\theta_0) =x_ix_j\mathds{1}_{x_i\geq 0}\,,\\
	\frac{\partial f}{\partial w_{i+d}}(x;\theta_0) = x_i^2\mathds{1}_{x_i\leq 0}\,,&
	\frac{\partial f}{\partial b_{i+d}}(x;\theta_0) =-x_i\mathds{1}_{x_i\leq 0}\,,&
	\frac{\partial f}{\partial A_{i+d,j}}(x;\theta_0) =x_{i+d}x_j\mathds{1}_{x_i\leq 0}\,,
	\end{array}\end{equation*}
	for $i,j=1,\ldots, d$. 
	Therefore, for this class of convex functions, if $\theta_0$ is in the interior of $\Theta$, \autoref{prop:W2F-metric} implies that if the approximate metric is zero, then the expectation of the functions noted above with respect to the two distributions are equal. Note that, these are not all the statistics that are being matched as other members of $\Theta_0$ may provide other statistics. 

\paragraph{Approximability.} Consider the problem of learning a symmetric one-dimensional distribution $\ud \nu = \frac{1}{2}\delta_{\{x=-v\}} + \frac{1}{2}\delta_{\{x=v\}}$ where $v\geq 0$. Suppose the generator generates distributions of the form $\ud \mu(x) = \frac{1}{2}\delta_{\{x=-u\}} + \frac{1}{2}\delta_{\{x=u\}}$ where $u\geq 0$ is the parameter of the generator. The parameter $u$ is learned by minimizing $\W_{2,\calF}(\mu,\nu)$ where the discriminator function class $\calF \coloneqq \{f:x\mapsto \max(\sigma^2(x-w),\sigma^2(-x-w));~|w|\leq L\}$ where $\sigma(x)$ is the ReLU function. 
The derivative of a function $f \in \calF$ is given by:
\begin{align*}
\nabla f(x;w) = \begin{cases} (x-w)\mathds{1}_{x\geq w} + (x+w)\mathds{1}_{x\leq -w} & w>0\\
(x-w)\mathds{1}_{x\geq 0} + (x+w)\mathds{1}_{x\leq 0} & w\leq 0
\end{cases}
\end{align*}
Then 
\begin{align*}
\nabla f \# \mu = \begin{cases}
\frac{1}{2}\delta_{\{x=-u\}} + \frac{1}{2}\delta_{\{x=u\}}& w\geq u \\
\frac{1}{2}\delta_{\{x=-u+w\}} + \frac{1}{2}\delta_{\{x=u-w\}}& w\leq u 
\end{cases}
\end{align*}
Therefore, $\nabla \calF \# \mu$ contains all distributions of the form 
$\ud \nu = \frac{1}{2}\delta_{\{x=-v\}} + \frac{1}{2}\delta_{\{x=v\}}$ for $v\in[0, L+u]$.
As a result of \autoref{prop:W2F-approximability}-\eqref{prop:W2F-approximability-1}, $\W_{2,\calF}(\mu,\nu)=\W_2(\mu,\nu)= |u-v|^2$. 
	
	Furthermore, if $\ud \nu = (\frac{1}{2}-\alpha)\delta_{\{x=-v\}} + (\frac{1}{2}+\alpha)\delta_{\{x=v\}}$ is slightly varied and does not belong to $\nabla \calF \# \mu$ then \autoref{prop:W2F-approximability}-\eqref{prop:W2F-approximability-1} provides an upper-bound for the error, with $\epsilon \leq \W_2(\lambda,\nu)=2\alpha|v|$ where $\ud \lambda =\frac{1}{2}\delta_{\{x=-v\}} + \frac{1}{2}\delta_{\{x=v\}}$. Also $|\nabla f^\star(y;w) -x|\leq w \leq L$. As a result $c_1=0$ and $c_2=L$. Hence $c=L$. 

\end{document}